\newcommand{\version}{paper}
\newcommand{\figuresize}{}
\newcommand{\habil}{habil}
\newcommand{\paper}{paper}
 \newcommand{\doc}{thesis\xspace}
\newcommand{\doc}{paper\xspace}
 \newenvironment{savequote}{\comment}{}
\DeclareMathAccent{\widehat}{\mathord}{largesymbols}{"62} 
\renewcommand{\todo}[1]{} 
\newcommand{\extra}[1]{\ifx\version\habil#1\fi}
\newcommand{\extrapaper}[1]{\ifx\version\paper#1\fi}
\let\subsubsection=\subsection
\let\subsection=\section
\let\section=\chapter
\let\charef=\secref
\newcommand{\f}{\mathcal{F}_0}
\newcommand{\fone}{\mathcal{F}}
\newcommand{\dom}{\overline{D}}
\newcommand{\cov}{\supset}
\newcommand{\mtc}{\mathit{MTC}}
\newcommand{\teq}[1][]{\ifthenelse{\equal{#1}{}}{\mathit{TEQ}}{\mathit{TEQ}(#1)}}
\newcommand{\me}[1][]{\ifthenelse{\equal{#1}{}}{\mathit{ME}}{\mathit{ME}(#1)}}
\newcommand{\mc}[1][]{\ifthenelse{\equal{#1}{}}{\mathit{MC}}{\mathit{MC}(#1)}}
\newcommand{\uc}[1][]{\ifthenelse{\equal{#1}{}}{\mathit{UC}}{\mathit{UC}(#1)}}
\newcommand{\ba}[1][]{\ifthenelse{\equal{#1}{}}{\mathit{BA}}{\mathit{BA}(#1)}}
\newcommand{\cnl}[1][]{\ifthenelse{\equal{#1}{}}{\mathit{CNL}}{\mathit{CNL}(#1)}}
\newcommand{\tc}[1][]{\ifthenelse{\equal{#1}{}}{\mathit{TC}}{\mathit{TC}(#1)}}
\newcommand{\co}[1][]{\ifthenelse{\equal{#1}{}}{\mathit{CO}}{\mathit{CO}(#1)}}
\newcommand{\bp}[1][]{\ifthenelse{\equal{#1}{}}{\mathit{BP}}{\mathit{BP}(#1)}}
\newcommand{\ms}[1]{\widehat{#1}}
\newcommand{\mcs}[1]{\accentset{\frown}{#1}}
\newcommand{\mr}[1]{\mathring{#1}}
\newcommand{\teqrel}[1][]{\ifthenelse{\equal{#1}{}}{\boldsymbol\rightarrow}{\teqrel_{#1}}}
\newcommand{\wsp}{\textsf{WSP}\xspace}
\newcommand{\ssp}{\textsf{SSP}\xspace}
\newcommand{\mon}{\textsf{MON}\xspace}
\newcommand{\iua}{\textsf{IUA}\xspace}
\newcommand{\com}{\textsf{COM}\xspace}
\newcommand{\irr}{\textsf{IRR}\xspace}
\newcommand{\aczero}{AC$^\text{0}$\xspace}
\newcommand{\tczero}{TC$^\text{0}$\xspace}
 \title{Tournament Solutions}
  \newcommand{\sw}{32em} 
  \newcommand{\sd}{3em} 
  \newcommand{\ra}{0.618} 
 \title{Minimal Stable Sets in Tournaments}
\author{Felix Brandt\\
	  Technische Universit\"at M\"unchen\\85748 Garching bei M\"unchen, Germany \\
	  \texttt{\small brandtf@in.tum.de}
	}
\date{}
\begin{document}

\maketitle
\extra{\tableofcontents}

\extrapaper{
\begin{abstract}
We propose a systematic methodology for defining tournament solutions as extensions of maximality. The central concepts of this methodology are  maximal qualified subsets and minimal stable sets.  We thus obtain an infinite hierarchy of tournament solutions, encompassing the top cycle, the uncovered set, the Banks set, the minimal covering set, the tournament equilibrium set, the Copeland set, and the bipartisan set. Moreover, the hierarchy includes a new tournament solution, the \emph{minimal extending set}, which is conjectured to refine both the minimal covering set and the Banks set.
\end{abstract}
}

\begin{savequote}
\sffamily
One of the consequences of the assumptions of rational choice is that the choice in any environment can be determined by a knowledge of the choices in two-element environments.
\qauthor{K.~J.~Arrow, 1951}
\end{savequote}


\section{Introduction}\label{sec:intro}

Given a finite set of alternatives and choices between all pairs of alternatives, how to choose from the entire set in a way that is faithful to the pairwise comparisons? This simple, yet captivating, problem is studied in the literature on tournament solutions.
A tournament solution thus seeks to identify the ``best'' elements according to some binary dominance relation, which is usually assumed to be asymmetric and complete. Since the ordinary notion of maximality may return no elements due to cyclical dominations, numerous alternative solution concepts have been devised and axiomatized \citep[see, \eg][]{Moul86a,Lasl97a}. 
\extrapaper{
Tournament solutions have numerous applications throughout economics, most prominently in social choice theory where the dominance relation is typically defined via majority rule~\citep[\eg][]{Fish77a,Bord83a}. Other application areas include multi-criteria decision analysis~\citep[\eg][]{ArRa86a,BMP+06a}, non-cooperative game theory~\citep[\eg][]{FiRy95a,LLL93b,DuLe96a}, and cooperative game theory~\citep{Gill59a,BrHa09a}.
} \extra{The contribution of this thesis is threefold.
\begin{itemize}
\item A new methodology for defining tournament solutions based on the notions of \emph{maximal qualified subsets} and \emph{minimal stable sets} is proposed. This unifying framework clarifies the relationships between known tournament solutions, sheds more light on the enigmatic tournament solution $\teq$, and yields the definition of a new tournament solution called $\me$. These results are presented for the first time in this thesis (Chapters~\ref{sec:prelim} to \ref{sec:quant}).
\item The computational complexity of most tournament solutions is analyzed. While some tournament solutions can be computed very efficiently (\eg in linear time), others are shown to be NP-hard and thus computationally infeasible. These findings are of relevance to theoreticians who study and compare tournament solutions based on formal properties and to practitioners who wish to algorithmically implement tournament solutions. The results of this analysis were published by \citet{BrFi08b}, \citet{BFH09b}, and \cite{BFHM09a} and are summarized in Chapter~\ref{sec:computation}.
\item The applicability of tournament solutions to three elementary cases of decision-making is demonstrated. In particular, Chapter~\ref{sec:applications} addresses \emph{collective decision-making} (social choice theory), \emph{adversarial decision-making} (theory of zero-sum games), and \emph{coalitional decision-making} (cooperative game theory). \thmref{thm:ntu} in \secref{sec:coalitional} first appeared in a paper by \citet{BrHa09a}.
\end{itemize}

In the remainder of this chapter, we outline the techniques and results of the three parts listed above.

\subsection{Extensions of Maximality}

}


\extrapaper{In this paper, we}\extra{We} approach the tournament choice problem using a methodology consisting of two layers: \emph{qualified subsets} and \emph{stable sets}. 
Our framework captures most known tournament solutions (notable omissions are the Slater set and the Markov set) and allows us to provide unified proofs of properties and inclusion relationships between tournament solutions. 

In \secref{sec:prelim}, we introduce the terminology and notation required to handle tournaments and define six standard properties of tournament solutions: monotonicity ($\mon$), independence of unchosen alternatives ($\iua$), the weak superset property ($\wsp$), the strong superset property ($\ssp$),  composition-consistency ($\com$), and irregularity ($\irr$). The remainder of the paper is then divided into four sections.

\paragraph{Qualified Subsets \normalfont{(\secref{sec:maxsubsets})}}

The point of departure for our methodology is to collect the maximal elements of so-called qualified subsets, \ie distinguished subsets that admit a maximal element. In general, families of qualified subsets are characterized by three properties (closure, independence, and fusion). Examples of families of qualified subsets are all subsets with at most two elements, all subsets that admit a maximal element, or all transitive subsets.
  Each family yields a corresponding tournament solution and we thus obtain an infinite hierarchy of tournament solutions. The tournament solutions corresponding to the three examples given above are the set of \emph{all alternatives except the minimum}, the \emph{uncovered set} \citep{Fish77a,Mill80a}, and the \emph{Banks set} \citep{Bank85a}. Our methodology allows us to easily establish a number of inclusion relationships between tournament solutions defined via qualified subsets (\propref{pro:inclusions}) and to prove that all such tournament solutions satisfy $\wsp$ and $\mon$ (\propref{pro:qwspmon}).
Based on an axiomatic characterization using minimality and a new property called strong retentiveness, we show that the Banks set is the finest tournament solution definable via qualified subsets (\thmref{thm:bankssdc}).

\paragraph{Stable Sets \normalfont{(\secref{sec:minstablesets})}}

Generalizing an idea by \citet{Dutt88a}, we then propose a method for refining any suitable solution concept $S$ by defining minimal sets that satisfy a natural stability criterion with respect to $S$. A crucial property in this context is whether $S$ always admits a \emph{unique} minimal stable set. For tournament solutions defined via qualified subsets, we show that this is the case if and only if no tournament contains two disjoint stable sets (\lemref{lem:pairwise}). As a consequence of this characterization and a theorem by \citet{Dutt88a}, we show that an infinite number of tournament solutions (defined via qualified subsets) always admit a unique minimal stable set (\thmref{thm:uniquekcovering}). Moreover, we show that all tournament solutions defined as unique minimal stable sets satisfy $\wsp$ and $\iua$ (\propref{pro:stableaiz}), $\ssp$ and various other desirable properties if the original tournament solution is defined via qualified subsets (\thmref{thm:minstableproperties}), and $\mon$ and $\com$ if the original tournament solution satisfies these properties (\propref{pro:stablemon} and \propref{pro:stablecomp}).
The minimal stable sets with respect to the three tournament solutions mentioned in the paragraph above are the \emph{minimal dominant set}, better known as the \emph{top cycle} \citep{Good71a,Smit73a}, the \emph{minimal covering set} \citep{Dutt88a}, and a new tournament solution that we call the \emph{minimal extending set ($\me$)}. Whether $\me$ satisfies uniqueness turns out to be a highly non-trivial combinatorial problem and remains open. If true, $\me$ would be contained in both the minimal covering set and the Banks set while satisfying a number of desirable properties. 
We conclude the section by axiomatically characterizing all tournament solutions definable via unique minimal stable sets (\propref{pro:stablecharac}).

\paragraph{Retentiveness and Stability \normalfont{(\secref{sec:teq})}}
$\me$ bears some resemblance to Schwartz's \emph{tournament equilibrium set} $\teq$ \citep{Schw90a}, which is defined as the minimal \emph{retentive} set of a tournament. There are some similarities between retentiveness and stability and, as in the case of $\me$, the uniqueness of a minimal retentive set and thus the characteristics of $\teq$ remain an open problem \citep{LLL93a,Houy09a}. 
We show that Schwartz's conjecture is stronger than ours (\thmref{thm:teqsq}) and has a number of interesting consequences such as that $\teq$ can also be represented as a minimal stable set (\thmref{thm:teqstable}) and is strictly contained in $\me$ (\coref{cor:teqsubset}). 

\paragraph{Quantitative Concepts \normalfont{(\secref{sec:quant})}}
We also briefly discuss a quantitative version of our framework, which considers qualified subsets that are maximal in terms of cardinality rather than set inclusion. We thus obtain the \emph{Copeland set} \citep{Cope51a} and---using a slightly modified definition of stability---the \emph{bipartisan set} \citep{LLL93b}. This completes our picture of tournament solutions and their corresponding minimal stable sets as given in \tabref{tbl:stronglystable} on page~\pageref{tbl:stronglystable}.

\extra{
\subsection{Computational Aspects}

Since tournament solutions are usually studied in economics, computational considerations are often ignored or not addressed in a rigorous and formal way. Nevertheless, the effort required to compute a tournament solution is of great importance for if computing a tournament solution is intractable, its applicability is seriously undermined. For all major tournament solutions defined in the first part of the thesis, we address the following questions: 
\begin{itemize}
\item Is there a polynomial-time algorithm for computing the tournament solution? 
\item If so, can it even be computed in linear time and/or is it first-order definable (and thus contained in the complexity class \aczero)? 
\item If not, can we show NP-hardness of the membership decision problem? 
\end{itemize}

Unsurprisingly, there is a strong correlation between the number of attractive properties a tournament solution satisfies and its computational complexity. The minimal covering set and the bipartisan set are particularly noteworthy in this context as they satisfy a large number of attractive properties, yet are barely computable in polynomial time. 

\subsection{Applications to Decision-Making}

Tournament solutions have numerous applications in diverse areas such as \emph{sports competitions} \citep[see, \eg][]{Usha76a,MGGM05a}, \emph{webpage and journal ranking} \citep[see, \eg][]{KoSt07a,BrFi07b}, \emph{biology} \citep[see, \eg][]{Land51a,Land51b,Land53a}, and \emph{psychology} \citep[see, \eg][]{Schj22a,Slat61a}. In this thesis, we focus on applications related to fundamental problems in decision-making, namely 
\begin{itemize}
\item \emph{collective decision-making} (social choice theory), 
\item \emph{adversarial decision-making} (theory of zero-sum games), and
\item \emph{coalitional decision-making} (cooperative game theory). 
\end{itemize}
In social choice theory, we characterize a setting where social choice functions and tournament solutions coincide. Indeed, most well-known tournament solutions were introduced as social choice functions based on the pairwise majority relation. We then establish a similar equivalence for certain game-theoretic solution concepts in a subclass of two-player zero-sum games. Interestingly, many tournament solutions correspond to well-known game-theoretic solution concepts within this particular class of games. Finally, we discuss how tournament solutions may be applied as solution concepts for cooperative game theory. This connection is not as elaborate as the previous two as it requires the generalization of tournament solutions to directed graphs in order to obtain a meaningful equivalence.

}

\begin{savequote}
\sffamily
Tournaments form perhaps the most interesting class of directed graphs and have a very rich theory, a theory which has no analog in the theory of undirected graphs.
\qauthor{K.~B.~Reid and L.~W.~Beineke, 1978}
\end{savequote}

\section{Preliminaries}\label{sec:prelim}

The core of the problem studied in the literature on tournament solutions is how to extend choices in sets consisting of only two elements to larger sets. Thus, our primary objects of study will be functions that select one alternative from any pair of alternatives. Any such function can be conveniently represented by a tournament, \ie a binary relation on the entire set of alternatives. Tournament solutions then advocate different views on how to choose from arbitrary subsets of alternatives based on these pairwise comparisons \citep[see, \eg][for an excellent overview of tournament solutions]{Lasl97a}.

\subsection{Tournaments}

Let $X$ be a universe of alternatives. The set of all \emph{finite} subsets of set $X$ will be denoted by $\f(X)$ whereas the set of all \emph{non-empty} finite subsets of $X$ will be denoted by $\fone(X)$.
A (finite) \emph{tournament $T$} is a pair $(A,{\succ})$, where $A\in \fone(X)$ and $\succ$ is an asymmetric and complete (and thus irreflexive) binary relation on $X$, usually referred to as the \emph{dominance relation}.\footnote{This definition slightly diverges from the common graph-theoretic definition where $\succ$ is defined on $A$ rather than $X$. However, it facilitates the sound definition of tournament functions (such as tournament solutions or concepts of qualified subsets).} Intuitively, $a\succ b$ signifies that alternative $a$ is preferable to $b$. 
The dominance relation can be extended to sets of alternatives by writing $A\succ B$ when $a\succ b$ for all $a\in A$ and $b\in B$. When $A$ or $B$ are singletons, we omit curly braces to improve readability.
We further write $\mathcal{T}(X)$ for the set of all tournaments on $X$.

\extra{For a relation $R$, we define $R^k$ recursively by letting $R^1=R$ and 
$R^{k+1}=R^k\cup\{(a,b)\mid (a,c)\in R^k \text{ and } (c,b)\in R \text{ for some }c\}$. The transitive closure of $R$ is defined as $R^*=\bigcup_{k\in \mathbb{N}} R^k$.}
For a set $B$, a relation $R$, and an element $a$, 
we denote by $D_{B,R}(a)$ the \emph{dominion} of $a$ in $B$, \ie 
\[D_{B,R}(a)=\{\,b\in B\mid a\mathrel{R}b\}\text{,}\] 
and by $\dom_{B,R}(a)$ the \emph{dominators} of $a$ in~$B$, \ie 
\[\dom_{B,R}(a)=\{\,b\in B\mid b\mathrel{R}a\}\text{.}\]
Whenever the tournament $(A,\succ)$ is known from the context and $R$ is the dominance relation $\succ$ or $B$ is the set of all alternatives $A$, the respective subscript will be omitted to improve readability. 

The \emph{order} of a tournament $T=(A,{\succ})$ refers to the cardinality of $A$.
A tournament $T=(A,{\succ})$ is called \emph{regular} if $|D(a)|=|D(b)|$ for all $a,b \in A$. 

Let $T=(A,{\succ})$ and $T'=(A',\succ')$ be two tournaments. A \emph{tournament isomorphism} of $T$ and $T'$ is a bijective mapping $\pi:A\rightarrow A'$ such that $a\succ b$ if and only if $\pi(a)\succ' \pi(b)$. 
A tournament $T=(A,{\succ})$ is called \emph{homogeneous} (or vertex-transitive) if for every pair of alternatives $a,b\in A$ there is a tournament isomorphism $\pi:A\rightarrow A$ of $T$ such that $\pi(a)=b$.

\subsection{Components and Decompositions}
\label{sec:components}

An important structural concept in the context of tournaments is that of a \emph{component}. A component is a subset of alternatives that bear the same relationship to all alternatives not in the set.

Let $T=(A,{\succ})$ be a tournament. A non-empty subset $B$ of $A$ is a \emph{component} of $T$ if for all $a\in A\setminus B$ either $B\succ a$ or $a\succ B$.
A \emph{decomposition} of $T$ is a set of pairwise disjoint components $\{B_1,\dots,B_k\}$ of $T$ such that $A=\bigcup_{i=1}^k B_i$.
\extra{The \emph{null decomposition} of a tournament $T=(A,{\succ})$ is $\{A\}$; the \emph{trivial decomposition} consists of all singletons of $A$. Any other decomposition is called \emph{proper}. A tournament is said to be \emph{decomposable} if it admits a proper decomposition.}
Given a particular decomposition $\tilde{B}=\{B_1,\dots,B_k\}$ of $T$, the \emph{summary} of $T$ is defined as the tournament on the individual components rather than the alternatives. Formally, the summary
$\tilde{T}=(\tilde{B},\tilde{\succ})$ of $T$ is the tournament such that for all $i,j\in\{1,\dots,k\}$ with $i\ne j$, 
\[
B_i\mathrel{\tilde{\succ}}B_j \quad\text{ if and only if }\quad B_i\succ B_j\text{.}
\]
Conversely, a new tournament can be constructed by replacing each alternative with a component. For notational convenience, we tacitly assume that $\mathbb{N}\subseteq X$. 
For pairwise disjoint sets $B_1,\dots,B_k\subseteq X$ and tournaments $\tilde{T}=(\{1,\dots,k\},\tilde{\succ})$, $T_1=(B_1,\succ_1)$, \dots, $T_k=(B_k,\succ_k)$, the \emph{product} of $T_1,\dots,T_k$ with respect to $\tilde{T}$, denoted by $\Pi(\tilde{T},T_1,\dots,T_k)$, is a tournament $(A,{\succ})$ such that $A=\bigcup_{i=1}^kB_i$ and for all $b_1\in B_i, b_2\in B_j$, 
\[
b_1\succ b_2 \quad\text{ if and only if }\quad i=j \text{ and } b_1\succ_i b_2 \text{, or } i\ne j \text{ and } i\mathrel{\tilde{\succ}} j\text{.}
\]

Components can also be used to simplify the graphical representation of tournaments. We will denote components by gray circles. Furthermore, omitted edges in figures that depict tournaments are assumed to point downwards or from left to right by convention (see, \eg \figref{fig:mcme}\extra{ on page~\pageref{fig:mcme}).}\extrapaper{).}

\subsection{Tournament Functions}

A central aspect of this \doc will be functions that, for a given tournament, yield one or more subsets of alternatives. We will therefore define the notion of a \emph{tournament function}.
A function on tournaments is a tournament function if it is independent of outside alternatives and stable with respect to tournament isomorphisms.
A tournament function may yield a (non-empty) subset of alternatives---as in the case of tournament solutions---or a set of subsets of alternatives---as in the case of qualified or stable sets.
\begin{definition}\label{def:tfunction}
Let $Z\in \{\f(X),\fone(X),\fone(\fone(X))\}$. A function $f:\mathcal{T}(X)\rightarrow Z$ is a \emph{tournament function} if
\begin{enumerate}[label=\textit{(\roman*)}] 
\item $f(T)=f(T')$ for all tournaments $T=(A,{\succ})$ and $T'=(A,\succ')$ such that ${\succ}|_A={\succ'}|_A$, and
\item $f((\pi(A),{\succ'}))=\pi(f((A,{\succ})))$ for all tournaments $(A,{\succ})$,  $(A',\succ')$, and tournament isomorphisms\footnote{$\pi(A)$ is a shorthand for the set $\{\pi(a)\mid a \in A\}$.}  $\pi:A\rightarrow A'$ of $(A,{\succ})$ and $(A',{\succ'})$.
\end{enumerate}
\end{definition}
For a given set $B\in\fone(X)$ and tournament function $f$, we overload $f$ by also writing~$f(B)$, provided the dominance relation is known from the context. For two tournament functions $f$ and $f'$, we write $f'\subseteq f$ if $f'(T)\subseteq f(T)$ for all tournaments~$T$.

\subsection{Tournament Solutions}

The first tournament function we consider is $\max_\prec:\mathcal{T}(X)\rightarrow \f(X)$, which returns the undominated alternatives of a tournament. Formally, 
\[
\max_\prec((A,{\succ})) = \{ a\in A \mid \dom_\succ(a)=\emptyset\}\text{.}
\]
Due to the asymmetry of the dominance relation, this function returns at most one alternative in any tournament. Moreover, maximal---\ie undominated---and maximum---\ie dominant---elements coincide.
In social choice theory, the maximum of a majority tournament is commonly referred to as the \emph{Condorcet winner}. Obviously, the dominance relation may contain cycles and thus fail to have a maximal element. For this reason, a variety of alternative concepts to single out the ``best'' alternatives of a tournament have been suggested. Formally, a \emph{tournament solution $S$} is defined as a function that associates with each tournament $T=(A,{\succ})$ a non-empty subset~$S(T)$ of $A$. 
\begin{definition}\label{def:tsolution}
A \emph{tournament solution $S$} is a tournament function $S:\mathcal{T}(X)\rightarrow \fone(X)$ such that
$\displaystyle \max_\prec(T)\subseteq S(T)\subseteq A$ for all tournaments $T=(A,{\succ})$.\footnote{\citet{Lasl97a} is slightly more stringent here as he requires the maximum be the only element in $S(T)$ whenever it exists.}
\end{definition}
The set $S(T)$ returned by a tournament solution for a given tournament $T$ is called the \emph{choice set} of $T$ whereas $A\setminus S(T)$ consists of the \emph{unchosen alternatives}. 
Since tournament solutions always yield non-empty choice sets, they have to select all alternatives in homogeneous tournaments.
If $S'\subseteq S$ for two tournament solutions $S$ and $S'$, we say that $S'$ is a \emph{refinement} of $S$ or that $S'$ is \emph{finer} than~$S$.

\subsection{Properties of Tournament Solutions}
\label{sec:properties}

The literature on tournament solutions has identified a number of desirable properties for tournament solutions. In this section, we will define six of the most common properties.\footnote{Our terminology slightly differs from the one by \citet{Lasl97a} and others. \emph{Independence of unchosen alternatives} is also called \emph{independence of the losers} or \emph{independence of non-winners}. The \emph{weak superset property} has been referred to as $\epsilon^+$ or the \emph{A\"izerman property}.} In a more general context, \citet{Moul88a} distinguishes between \emph{monotonicity} and \emph{independence} conditions, where a monotonicity condition describes the positive association of the solution with some parameter and an independence condition characterizes the invariance of the solution under the modification of some parameter.

In the context of tournament solutions, we will further distinguish between properties that are defined in terms of the dominance relation and properties defined in terms of the set inclusion relation. With respect to the former, we consider \emph{monotonicity} and \emph{independence of unchosen alternatives}.
A tournament solution is monotonic if a chosen alternative remains in the choice set when extending its dominion and leaving everything else unchanged. 
\begin{definition}\label{def:mon}
  A tournament solution~$S$ satisfies \emph{monotonicity} (\mon) if $a\in S(T)$ implies $a\in S(T')$ for all tournaments $T=(A,{\succ})$, $T'=(A,\succ')$, and $a\in A$ such that ${\succ}|_{A\setminus \{a\}}={\succ'}|_{A\setminus \{a\}}$ and $D_{\succ}(a)\subseteq D_{\succ'}(a)$.
\end{definition}
A solution is independent of unchosen alternatives if the choice set is invariant under any modification of the dominance relation between unchosen alternatives.
\begin{definition}\label{def:ira}
  A tournament solution~$S$ is \emph{independent of unchosen alternatives} (\iua) if $S(T)=S(T')$ for all tournaments $T=(A,{\succ})$ and $T'=(A,\succ')$ such that $D_{\succ}(a)=D_{\succ'}(a)$ for all $a\in S(T)$.
\end{definition}

With respect to set inclusion, we consider a monotonicity property to be called the \emph{weak superset property} and an independence property known as the \emph{strong superset property}. A tournament solution satisfies the weak superset property if an unchosen alternative remains unchosen when other unchosen alternatives are removed.
\begin{definition}\label{def:wsp}
A tournament solution~$S$ satisfies the \emph{weak superset property} (\wsp) if $S(B)\subseteq S(A)$ for all tournaments $T=(A,{\succ})$ and $B\subseteq A$ such that $S(A)\subseteq B$.
\end{definition}
The strong superset property states that a choice set is invariant under the removal of unchosen alternatives.
\begin{definition}\label{def:ssp}
A tournament solution~$S$ satisfies the \emph{strong superset property} (\ssp) if $S(B)=S(A)$ for all tournaments $T=(A,{\succ})$ and $B\subseteq A$ such that $S(A)\subseteq B$.
\end{definition}
The difference between \wsp and \ssp is precisely another independence condition called \emph{idempotency}. A solution is \emph{idempotent} 
if the choice set is invariant under repeated application of the solution concept, \ie $S(S(T))=S(T)$ for all tournaments $T$. 
When $S$ is not idempotent, we define $S^k(T)=S(S^{k-1}(T))$ inductively by letting $S^1(T)=S(T)$ and $S^\infty(T)=\bigcap_{k\in\mathbb{N}} S^k(T)$.

The four properties defined above (\mon, \iua, \wsp, and \ssp) will be called \emph{basic} properties of tournament solutions. The conjunction of \mon and \ssp implies \iua. It is therefore sufficient to show \mon and \ssp in order to prove that a tournament solution satisfies all four basic properties.

Two further properties considered in this \doc are \emph{composition-consistency} and \emph{irregularity}. A tournament solution is composition-consistent if it chooses the ``best'' alternatives from the ``best'' components.
\begin{definition}
A tournament solution $S$ is \emph{composition-consistent} (\com) if for all tournaments $T$, $T_1, \dots, T_k$, and $\tilde{T}$ such that $T=\Pi(\tilde{T},T_1,\dots,T_k)$, $S(T)=\bigcup_{i\in S(\tilde{T})} S(T_i)$.
\end{definition}
Finally, a tournament solution is irregular if it is capable of excluding alternatives in regular tournaments.
\begin{definition}
A tournament solution $S$ satisfies \emph{irregularity} (\irr) if there exists a regular tournament $T=(A,\succ)$ such that $S(T)\ne A$.
\end{definition}


\begin{savequote}
\sffamily
The social sciences may be characterized by the fact that in most of their problems numerical measurements seem to be absent and considerations of space are irrelevant.
\qauthor{J.~G.~Kemeny, 1959}
\end{savequote}

\section{Qualified Subsets}
\label{sec:maxsubsets}

In this \extra{chapter}\extrapaper{section}, we will define a class of tournament solutions that is based on identifying significant subtournaments of the original tournament, such as subtournaments that admit a maximal alternative. 

\subsection{Concepts of Qualified Subsets}

A \emph{concept of qualified subsets} is a tournament function that, for a given tournament $T=(A,{\succ})$, returns subsets of $A$ that satisfy certain properties. Each such set of sets will be referred to as a \emph{family of qualified subsets}.
Two natural examples of concepts of qualified subsets are $\mathcal{M}$, which yields all subsets that admit a maximal element, and $\mathcal{M}^*$, which yields all non-empty transitive subsets. Formally,
\begin{eqnarray*}
\mathcal{M}((A,{\succ})) &=& \{B\subseteq A\mid \max_\prec(B)\ne\emptyset\}\text{ and}\\
\mathcal{M}^*((A,{\succ})) &=& \{B\subseteq A\mid \max_\prec(C)\ne\emptyset \text{ for all non-empty } C\subseteq B\}\text{.}
\end{eqnarray*}

$\mathcal{M}$ and $\mathcal{M}^*$ 
are examples of concepts of qualified subsets, which are formally defined as follows.


\begin{definition}\label{def:qualified}
Let $\mathcal{Q}:\mathcal{T}(X)\rightarrow \fone(\fone(X))$ be a tournament function such that $\mathcal{M}_1(T)\subseteq\mathcal{Q}(T)\subseteq \mathcal{M}(T)$. $\mathcal{Q}$ is a \emph{concept of qualified subsets} if it meets the following three conditions for every tournament $T=(A,{\succ})$.
\begin{description}[font=\normalfont]
 \item[(Closure)] $\mathcal{Q}(T)$ is downward closed with respect to $\mathcal{M}$: Let $Q\in  \mathcal{Q}(T)$. Then, $Q'\in \mathcal{Q}(T)$ if $Q'\subseteq Q$ and $Q'\in \mathcal{M}(T)$. 
 \item[(Independence)] 
Qualified sets are independent of outside alternatives: 
 Let $A'\in \fone(X)$ and $Q\subseteq A\cap A'$. Then, $Q\in \mathcal{Q}(A)$ if and only if $Q\in \mathcal{Q}(A')$.
 \item[(Fusion)] Qualified sets may be merged under certain conditions:
Let $Q_1, Q_2\in \mathcal{Q}(T)$ and $Q_1\setminus Q_2\succ Q_2$. Then $Q_1\cup Q_2\in \mathcal{Q}(T)$ if there is a tournament $T'\in \mathcal{T}(X)$ and $Q\in\mathcal{Q}(T')$ such that $|Q_1\cup Q_2|\le|Q|$.
\end{description}
\end{definition}
Whether a set is qualified only depends on its internal structure (due to independence and the isomorphism condition of \defref{def:tfunction}).
While closure, independence, and the fact that all singletons are qualified are fairly natural, the fusion condition is slightly more technical. Essentially, it states that if a qualified subset dominates another qualified subset, then the union of these subsets is also qualified. The additional cardinality restriction is only required to enable \emph{bounded} qualified subsets. For every concept of qualified subsets $\mathcal{Q}$ and every given $k\in\mathbb{N}$, $\mathcal{Q}_k:\mathcal{T}(X)\rightarrow \fone(\fone(X))$ is a tournament function such that $\mathcal{Q}_k(T)=\{B\in \mathcal{Q}(T)\mid |B|\le k\}$.
It is easily verified that $\mathcal{Q}_k$ is a concept of qualified subsets.
Furthermore, $\mathcal{M}$ and $\mathcal{M}^*$ (and thus also $\mathcal{M}_k$ and $\mathcal{M}^*_k$) are concepts of qualified subsets. Since only tournaments of order $4$ or more may be intransitive and admit a maximal element at the same time, $\mathcal{M}_k=\mathcal{M}_k^*$ for $k\in\{1,2,3\}$.

\subsection{Maximal Elements of Maximal Qualified Subsets}
\label{sec:maxmaxqualified}

For every concept of qualified subsets, we can now define a tournament solution that yields the maximal elements of all inclusion-maximal qualified subsets, \ie all qualified subsets that are not contained in another qualified subset.
\begin{definition}\label{def:maxqualified}
Let $\mathcal{Q}$ be a concept of qualified subsets. Then, the tournament solution $S_\mathcal{Q}$ is defined as
\[
S_\mathcal{Q}(T) = \{ \max_\prec(B) \mid B\in \max_\subseteq(\mathcal{Q}(T))\}\text{.}
\]
\end{definition}

Since any family of qualified subsets contains all singletons, $S_\mathcal{Q}(T)$ is guaranteed to be non-empty and contains the Condorcet winner whenever one exists. As a consequence, $S_\mathcal{Q}$ is well-defined as a tournament solution. 

The following  tournament solutions can be restated via appropriate concepts of qualified subsets.

\paragraph{Condorcet non-losers.}
$S_{\mathcal{M}_2}$ is arguably the largest non-trivial tournament solution. In tournaments of order two or more, it chooses every alternative that dominates at least one other alternative. We will refer to this concept as \emph{Condorcet non-losers} ($\cnl$) as it selects everything except the minimum (or \emph{Condorcet loser}) in such tournaments.
 
\paragraph{Uncovered set.}
$S_\mathcal{M}(T)$ returns the \emph{uncovered set} $\uc(T)$ of a tournament $T$, \ie the set consisting of the maximal elements of inclusion-maximal subsets that admit a maximal element. The uncovered set is usually defined in terms of a subrelation of the dominance relation called the \emph{covering relation} \citep{Fish77a,Mill80a}. \extra{See also \secref{sec:uc_complexity}.}

\paragraph{Banks set.}
$S_{\mathcal{M}^*}(T)$ yields the \emph{Banks set} $\ba(T)$ of a tournament $T$ \citep{Bank85a}. $\mathcal{M}^*(T)$ contains subsets that not only admit a maximum, but can be completely ordered from maximum to minimum such that all of their non-empty subsets admit a maximum. $S_{\mathcal{M}^*}(T)$ thus returns the maximal elements of inclusion-maximal \emph{transitive} subsets.
\bigskip

In the remainder of this section, we will prove various statements about tournaments solutions defined via qualified subsets. 
For a set $B$ and an alternative $a\not\in B$, the short notation $[B,a]$ will be used to denote the set $B\cup\{a\}$ and the fact that $\max_\prec(B\cup\{a\})=\{a\}$.
In several proofs, we will make use of the fact that whenever $a\not\in S_\mathcal{Q}(T)$, there is some $b\in S_\mathcal{Q}(T)$ for every qualified subset $[Q,a]$ such that $[Q\cup\{a\},b]\in \mathcal{Q}(T)$.
We start by showing that every tournament solution defined via qualified subsets satisfies the weak superset property and monotonicity. 

\begin{proposition}\label{pro:qwspmon}
Let $\mathcal{Q}$ be a concept of qualified subsets. Then, $S_\mathcal{Q}$ satisfies \wsp and \mon.
\end{proposition}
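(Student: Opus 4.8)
The plan is to prove both properties by contraposition, in each case leaning on the fact already singled out in the text: whenever $a\notin S_\mathcal{Q}(T)$, every qualified subset $[Q,a]\in\mathcal{Q}(T)$ can be extended by a single element $b\in S_\mathcal{Q}(T)$ with $[Q\cup\{a\},b]\in\mathcal{Q}(T)$. (This holds because $Q\cup\{a\}$ extends to an inclusion-maximal qualified set $Q^*$; its maximal element $b$ lies in $S_\mathcal{Q}(T)$, hence $b\neq a$ and $b$ dominates $Q\cup\{a\}$, so closure applied to $Q\cup\{a,b\}\subseteq Q^*$ puts it in $\mathcal{Q}(T)$.) I will also use repeatedly that whether a set is qualified depends only on the edges inside it, so a qualified set remains qualified when the ambient tournament is altered without touching those edges.

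For \wsp, assume $S_\mathcal{Q}(A)\subseteq B\subseteq A$, suppose $a\in S_\mathcal{Q}(B)$, and suppose for contradiction $a\notin S_\mathcal{Q}(A)$. I would first pick an inclusion-maximal $Q\cup\{a\}\in\mathcal{Q}(B)$ with maximal element $a$, i.e. $[Q,a]$. Independence moves it to $\mathcal{Q}(A)$; since $a\notin S_\mathcal{Q}(A)$, the extension fact yields $b\in S_\mathcal{Q}(A)\subseteq B$ with $Q\cup\{a,b\}\in\mathcal{Q}(A)$ and $b\notin Q\cup\{a\}$. As $Q\cup\{a,b\}\subseteq B$, independence returns it to $\mathcal{Q}(B)$, a qualified subset of $B$ strictly containing the inclusion-maximal set $Q\cup\{a\}$ — contradiction.

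For \mon, let $T'=(A,\succ')$ arise from $T=(A,\succ)$ by weakly enlarging the dominion of $a$, suppose $a\in S_\mathcal{Q}(T)$, and suppose for contradiction $a\notin S_\mathcal{Q}(T')$. I would take an inclusion-maximal $Q\cup\{a\}\in\mathcal{Q}(T)$ with $[Q,a]$ in $T$; since $a$ dominates all of $Q$ in $T$ and hence also in $T'$, the edges inside $Q\cup\{a\}$ are the same in both tournaments, so $[Q,a]\in\mathcal{Q}(T')$. The extension fact in $T'$ then gives $b\in S_\mathcal{Q}(T')$ with $Q\cup\{a,b\}\in\mathcal{Q}(T')$, $b\succ' a$, $b\succ' Q$, $b\notin Q\cup\{a\}$; since $b$ and $Q$ avoid $a$, also $b\succ Q$ in $T$. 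The only edge inside $Q\cup\{a,b\}$ that can differ between $T$ and $T'$ is the $a$--$b$ edge, so I split into two cases. If $b\succ a$ already in $T$, the edges inside $Q\cup\{a,b\}$ agree in the two tournaments, so $Q\cup\{a,b\}\in\mathcal{Q}(T)$. If instead $a\succ b$ in $T$, I first use closure in $T'$ to get $Q\cup\{b\}\in\mathcal{Q}(T')$ (it has maximal element $b$), hence $Q\cup\{b\}\in\mathcal{Q}(T)$ since it avoids $a$, and then apply the fusion condition in $T$ to $Q_1=Q\cup\{a\}$ and $Q_2=Q\cup\{b\}$: here $Q_1\setminus Q_2=\{a\}$ with $a\succ Q\cup\{b\}$, and the cardinality proviso is met by taking $T'$ itself as the witness tournament, since $Q\cup\{a,b\}\in\mathcal{Q}(T')$ has exactly the required size. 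Either way $Q\cup\{a,b\}$ is a qualified subset of $T$ properly larger than the inclusion-maximal set $Q\cup\{a\}$ — contradiction.

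The two contraposition arguments are routine once the extension fact is available. The hard part will be the second case of monotonicity, where the $a$--$b$ edge genuinely flips between $T$ and $T'$, so $Q\cup\{a,b\}$ is not internally the same set in both and the structural argument fails; this is exactly the configuration the fusion axiom exists for, and the subtlety to get right is that its cardinality side-condition is discharged by pointing to $T'$, where the set already appears as a qualified subset.
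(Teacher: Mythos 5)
Your proof is correct and takes essentially the same route as the paper's: both arguments rest on independence together with the extension fact that any qualified subset $[Q,a]$ with $a\not\in S_\mathcal{Q}(T)$ extends to some $[Q\cup\{a\},b]$ with $b\in S_\mathcal{Q}(T)$, the only cosmetic difference being that you handle the full dominion enlargement in one step where the paper flips a single edge. Note, though, that the second case of your monotonicity argument is vacuous: since $b\succ' a$ and $D_{\succ}(a)\subseteq D_{\succ'}(a)$, the alternative $b$ cannot lie in $D_{\succ}(a)$, so $b\succ a$ must already hold in $T$ and the fusion step (which you single out as the hard part) is never actually needed.
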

\begin{proof}
Let $T=(A,\succ)$ be a tournament, $a\not\in S_\mathcal{Q}(A)$, and $A'\subseteq A$ such that $S_\mathcal{Q}(T)\cup\{a\}\subseteq A'$. For \wsp, we need to show that $a\not\in S_\mathcal{Q}(A')$.
Let $[Q,a]\in\mathcal{Q}(A')$. Due to independence, $[Q,a]\in\mathcal{Q}(A)$. Since $a\not\in S_\mathcal{Q}(A)$, there has to be some $b\in S_\mathcal{Q}(A)$ such that $[Q\cup\{a\},b]\in\mathcal{Q}(A)$. Again, independence implies that $[Q\cup\{a\},b]\in\mathcal{Q}(A')$. Hence, $a\not\in S_\mathcal{Q}(A')$.

For \mon, observe that $a\in S_\mathcal{Q}$ implies that there exists $[Q,a]\in \max_\subseteq(\mathcal{Q}(T))$. 
Define $T'=(A,\succ')$ by letting $T'|_{A\setminus \{a\}}=T|_{A\setminus \{a\}}$ and $a\succ' b$ for some $b\in A$ with $b\succ a$.
Clearly, $[Q,a]$ is contained in $\mathcal{Q}(T')$ due to independence and the fact that $b\not\in Q$. Now, assume for contradiction that there is some $c\in A$ such that $[Q\cup\{a\},c]\in\mathcal{Q}(T')$. Since $a\succ' b$, $c\ne b$. Independence then implies that $[Q\cup\{a\},c]\in\mathcal{Q}(T)$, a contradiction.
\end{proof}
Proposition~\ref{pro:qwspmon} implies several known statements such as that
$\cnl$, $\uc$, and $\ba$ satisfy \mon and \wsp. All three concepts are known to fail idempotency (and thus \ssp). $\cnl$ trivially satisfies \iua whereas this is not the case for $\uc$ and $\ba$ \citep[see][]{Lasl97a}.
We also obtain some straightforward inclusion relationships, which define an infinite hierarchy of tournament solutions ranging from $\cnl$ to $\ba$.

\begin{proposition}\label{pro:inclusions}
  $S_{\mathcal{M}^*}\subseteq S_{\mathcal{M}}$, $S_{\mathcal{M}_k^*}\subseteq S_{\mathcal{M}_k}$, $S_{\mathcal{Q}_{k+1}}\subseteq S_{\mathcal{Q}_k}$, and $S_{\mathcal{Q}}\subseteq S_{\mathcal{Q}_{k}}$ for every concept of qualified subsets $\mathcal{Q}$ and $k\in\mathbb{N}$.
\end{proposition}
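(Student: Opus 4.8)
The plan is, for each of the four inclusions $S\subseteq S'$, to start from an inclusion‑maximal qualified subset $[Q,a]$ witnessing membership of an alternative $a$ in the (conjecturally finer) solution $S$ and to conclude $a\in S'$, arguing by contradiction from the hypothesis $a\notin S'$. Two ingredients carry the whole argument: the fact recorded just before the proposition (if $a\notin S_\mathcal{Q}(T)$, then for every qualified subset $[Q,a]$ there is $b\in S_\mathcal{Q}(T)$ with $[Q\cup\{a\},b]\in\mathcal{Q}(T)$), and the elementary remark that any member of $\mathcal{Q}_k(T)$ of cardinality exactly $k$ is inclusion‑maximal in $\mathcal{Q}_k(T)$, so that its maximum lies in $S_{\mathcal{Q}_k}(T)$.

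For $S_{\mathcal{M}^*}\subseteq S_\mathcal{M}$ (and, with a small modification, $S_{\mathcal{M}_k^*}\subseteq S_{\mathcal{M}_k}$): let $[Q,a]$ be an inclusion‑maximal transitive subset with maximum $a$ and suppose for contradiction $a\notin S_\mathcal{M}(T)$. Since $[Q,a]\in\mathcal{M}(T)$, the fact yields $b$ with $b\succ Q\cup\{a\}$ and $[Q\cup\{a\},b]\in\mathcal{M}(T)$. Adding an alternative that dominates a linearly ordered set as a new top keeps it linearly ordered, so $[Q\cup\{a\},b]$ is again transitive, contradicting inclusion‑maximality of $[Q,a]$ among transitive sets. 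For the bounded version one first observes that $a\notin S_{\mathcal{M}_k}(T)$ forces $|Q\cup\{a\}|\le k-1$ (otherwise the cardinality remark already puts $a$ into $S_{\mathcal{M}_k}(T)$), whence the transitive set $[Q\cup\{a\},b]$ supplied by the fact has at most $k$ elements and therefore lies in $\mathcal{M}_k^*(T)$; the rest is identical.

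For $S_{\mathcal{Q}_{k+1}}\subseteq S_{\mathcal{Q}_k}$ and $S_\mathcal{Q}\subseteq S_{\mathcal{Q}_k}$: assume $a\notin S_{\mathcal{Q}_k}(T)$ and suppose $a\in S_{\mathcal{Q}'}(T)$, witnessed by an inclusion‑maximal $[Q,a]\in\mathcal{Q}'(T)$, where $\mathcal{Q}'$ denotes $\mathcal{Q}_{k+1}$ or $\mathcal{Q}$. If $|Q\cup\{a\}|\le k-1$, then $[Q,a]\in\mathcal{Q}_k(T)$, and the fact produces $b$ with $[Q\cup\{a\},b]\in\mathcal{Q}_k(T)\subseteq\mathcal{Q}'(T)$, a qualified superset of $[Q,a]$ — contradiction. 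If $|Q\cup\{a\}|\ge k$, choose $Q''\subseteq Q$ with $|Q''\cup\{a\}|=k$; then $[Q'',a]$ has maximum $a$, so downward closure applied to $[Q,a]\in\mathcal{Q}(T)$ gives $[Q'',a]\in\mathcal{Q}_k(T)$, and being of maximum cardinality $k$ it is inclusion‑maximal there, whence $a\in S_{\mathcal{Q}_k}(T)$ — again a contradiction.

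The main obstacle is conceptual rather than computational: the assignment of a tournament solution to a family of qualified subsets is not monotone, since enlarging the family refines the solution along $\mathcal{Q}_k\subseteq\mathcal{Q}_{k+1}$ but coarsens it along $\mathcal{M}^*\subseteq\mathcal{M}$. Hence the four inclusions cannot be read off from a single monotonicity principle — the $\mathcal{M}^*/\mathcal{M}$‑type inclusions rest on the upward stability of transitivity under adding a dominating top, whereas the bounded/unbounded inclusions rest on the cardinality bound together with downward closure. The only genuinely delicate point is the cardinality bookkeeping in the bounded cases, namely ensuring that the alternative $b$ returned by the fact still fits under the bound $k$; this is precisely what the remark about cardinality‑$k$ qualified subsets secures.
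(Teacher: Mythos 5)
Your proof is correct and, modulo presentation, matches the paper's: the paper compresses everything into the single observation that $S_{\mathcal{Q}}\subseteq S_{\mathcal{Q}'}$ whenever every $[Q,a]\in\max_\subseteq(\mathcal{Q}(T))$ gives rise to some $[Q',a]\in\max_\subseteq(\mathcal{Q}'(T))$, leaving the four instantiations to the reader, and your case analysis (transitivity is preserved under adding a dominating top; a cardinality-$k$ member of $\mathcal{Q}_k(T)$ is automatically inclusion-maximal there; downward closure lets you truncate to size $k$) is exactly what those instantiations require. Your contrapositive packaging via the ``fact'' stated before the proposition is equivalent to the paper's direct formulation.
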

\begin{proof}
All inclusion relationships follow from the following observation.
Let $T$ be a tournament and $\mathcal{Q}$ and $\mathcal{Q'}$ concepts of qualified subsets such that for every $[Q,a]\in\max_\subseteq(\mathcal{Q}(T))$, there is $[Q',a]\in\max_\subseteq(\mathcal{Q'}(T))$. Then, $S_{\mathcal{Q}}\subseteq S_\mathcal{Q'}$.
\end{proof}

It turns out that the Banks set is the finest tournament solution definable via qualified subsets. In order to show this, we introduce a new property called \emph{strong retentiveness} that prescribes that the choice set of every dominator set is contained in the original choice set. Alternatively, it can be seen as a variant of \wsp because it states that a choice set may not grow when an alternative and its entire dominion are removed from the tournament.

\begin{definition}\label{def:dcon}
A tournament solution~$S$ satisfies \emph{strong retentiveness} if $S(\dom(a))\subseteq S(A)$ for all tournaments $T=(A,{\succ})$ and $a\in A$.
\end{definition}

\begin{lemma}\label{lem:dcon}
Let $\mathcal{Q}$ be a concept of qualified subsets. Then, $S_\mathcal{Q}$ satisfies strong retentiveness.
\end{lemma}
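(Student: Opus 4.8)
The plan is to prove the required inclusion $S_\mathcal{Q}(\dom(a)) \subseteq S_\mathcal{Q}(A)$ directly. Fix a tournament $T = (A,{\succ})$ and $a \in A$; we may assume $\dom(a) \neq \emptyset$, since otherwise there is nothing to prove. Take an arbitrary $b \in S_\mathcal{Q}(\dom(a))$; by definition of $S_\mathcal{Q}$ there is a set $[Q,b] \in \max_\subseteq(\mathcal{Q}(\dom(a)))$. Two observations will be used throughout: since $[Q,b] \subseteq \dom(a)$, every element of $Q \cup \{b\}$ dominates $a$, so $Q \cup \{b\} \succ a$ and, in particular, $a \notin Q \cup \{b\}$; and, by independence, $[Q,b]$ is also qualified in $T$. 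Now assume for contradiction that $b \notin S_\mathcal{Q}(A)$. The goal is to construct a qualified subset of $\dom(a)$ that strictly contains $[Q,b]$, contradicting its $\subseteq$-maximality in $\mathcal{Q}(\dom(a))$.

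First I would apply the fact noted earlier (whenever $b \notin S_\mathcal{Q}(T)$ and $[Q,b] \in \mathcal{Q}(T)$, some $c \in S_\mathcal{Q}(T)$ satisfies $[Q \cup \{b\},c] \in \mathcal{Q}(T)$) to the qualified set $[Q,b]$: there is some $c$ with $[Q \cup \{b\},c] \in \mathcal{Q}(A)$, so $Q \cup \{b,c\}$ is qualified and has cardinality $|Q| + 2$. This set will serve as the cardinality witness needed for fusion. Applying fusion to $Q_1 = [Q,b]$ and $Q_2 = \{a\}$ --- both qualified in $T$, the latter because all singletons are qualified --- we have $Q_1 \setminus Q_2 = Q \cup \{b\} \succ a = Q_2$ and $|Q_1 \cup Q_2| = |Q| + 2 \le |Q \cup \{b,c\}|$, hence $Q \cup \{a,b\} \in \mathcal{Q}(A)$. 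Since $b$ dominates all of $Q$ (it is the maximum of $[Q,b]$) and $b \succ a$, the maximum of $Q \cup \{a,b\}$ is $b$, \ie $[Q \cup \{a\},b] \in \mathcal{Q}(A)$.

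Next I would apply the same fact a second time, this time to the qualified set $[Q \cup \{a\},b]$: there is some $c'$ with $[Q \cup \{a\} \cup \{b\},c'] \in \mathcal{Q}(A)$, so $c' \succ Q \cup \{a,b\}$ and $c' \notin Q \cup \{a,b\}$. Crucially, $c' \succ a$, so $c' \in \dom(a)$. By closure, the subset $Q \cup \{b,c'\}$ of $Q \cup \{a,b,c'\} \in \mathcal{Q}(A)$ --- which admits $c'$ as its maximum and hence lies in $\mathcal{M}(A)$ --- is itself in $\mathcal{Q}(A)$, \ie $[Q \cup \{b\},c'] \in \mathcal{Q}(A)$. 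Since $Q \cup \{b,c'\} \subseteq \dom(a)$, independence gives $[Q \cup \{b\},c'] \in \mathcal{Q}(\dom(a))$, and this set strictly contains $[Q,b]$ because $c' \notin Q \cup \{b\}$. This contradicts the maximality of $[Q,b]$, so in fact $b \in S_\mathcal{Q}(A)$, which is what we wanted.

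The one delicate point is the fusion step: fusion carries a cardinality side condition, and the argument goes through precisely because the first application of the extension fact manufactures a qualified set of exactly the size needed to merge $[Q,b]$ with $\{a\}$. Once $[Q \cup \{a\},b]$ is known to be qualified, the second application of the extension fact is forced to introduce an element dominating all of $Q \cup \{a\} \cup \{b\}$ and hence, in particular, dominating $a$, so it falls back inside $\dom(a)$; closure then lets us discard $a$ again to obtain the required strictly larger qualified subset of $\dom(a)$. Everything else is routine bookkeeping with independence, closure, and the $[\,\cdot\,,\,\cdot\,]$ notation.
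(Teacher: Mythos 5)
Your proof is correct and follows essentially the same route as the paper's: both arguments use the extension of $[Q,b]$ by some $c\in A$ as the cardinality witness for fusing $[Q,b]$ with the singleton $\{a\}$ to obtain the qualified set $[Q\cup\{a\},b]$, and then observe that any further extension by some $c'$ forces $c'\succ a$, so that discarding $a$ via closure and independence yields a qualified subset of $\dom(a)$ strictly containing $[Q,b]$, contradicting its maximality. Your write-up is, if anything, slightly more explicit than the paper's about the fusion side condition and about why the fact that $b\notin S_\mathcal{Q}(A)$ is what licenses the two extension steps.
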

\begin{proof}
Let $(A,\succ)$ be a tournament, $a\in A$ an alternative, and $B=\dom(a)$. We show that $b\in S_\mathcal{Q}(B)$ implies that $b\in S_\mathcal{Q}(A)$.
Let $[Q,b]$ be a maximal qualified subset in $B$, \ie $[Q,b]\in\max_\subseteq(\mathcal{Q}(B))$. If $[Q,b]\in\max_\subseteq(\mathcal{Q}(A))$, we are done. Otherwise, there has to be some $c\in A$ such that $[Q\cup\{b\},c]\in \mathcal{Q}(A)$. Furthermore, $[Q,b]\succ a$ and $a\succ c$ because otherwise $[Q\cup\{b\},c]$ would be qualified in $B$ as well. We can now merge the qualified subsets $[Q,b]$ and $[\{a\},b]$ according to the fusion condition. We claim that $[Q\cup\{a\},b]\in\max_\subseteq(\mathcal{Q}(A))$. Assume for contradiction that there is some $d\in A$ such that $[Q\cup\{a,b\},d]\in \mathcal{Q}(A)$. Since $d\in \dom(a)$, independence implies that $[Q\cup\{a,b\},d]\in \mathcal{Q}(B)$. This is a contradiction because $[Q,b]$ was assumed to be a \emph{maximal} qualified subset of~$B$.
\end{proof}

\begin{theorem}\label{thm:bankssdc}
The Banks set is the finest tournament solution satisfying strong retentiveness and thus the finest tournament solution definable via qualified subsets.
\end{theorem}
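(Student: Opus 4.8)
The plan is to prove the statement in three moves: (i) $\ba$ itself satisfies strong retentiveness; (ii) every tournament solution satisfying strong retentiveness is a coarsening of $\ba$; and (iii) deduce the claim about qualified subsets. Step~(i) is immediate: since $\ba = S_{\mathcal{M}^*}$, \lemref{lem:dcon} already tells us that $\ba$ satisfies strong retentiveness. For step~(iii), recall that $\ba$ is itself definable via qualified subsets (again $\ba = S_{\mathcal{M}^*}$), and that by \lemref{lem:dcon} every tournament solution $S_\mathcal{Q}$ defined via a concept of qualified subsets satisfies strong retentiveness; combining this with step~(ii) gives $\ba \subseteq S_\mathcal{Q}$ for every $\mathcal{Q}$, so $\ba$ is the finest tournament solution in this class. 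Thus the whole difficulty is concentrated in step~(ii).

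For step~(ii), let $S$ be any tournament solution satisfying strong retentiveness; I would show $a \in S(T)$ whenever $a \in \ba(T)$, arguing by induction on the order of $T = (A,\succ)$. The case $|A| = 1$ is trivial, and if $\max_\prec(T) = \{a\}$ then $a \in S(T)$ by the defining property of a tournament solution. So suppose $|A| \geq 2$ and $a$ is not the maximum of $T$. Since $a \in \ba(T)$, there is an inclusion-maximal transitive subtournament $B$ of $T$ with $\max_\prec(B) = \{a\}$; write its elements in the transitive order as $a = b_0 \succ b_1 \succ \dots \succ b_m$. Here $m \geq 1$, because a singleton is never an inclusion-maximal transitive subtournament of a tournament of order at least two, as any two alternatives already form a transitive subtournament. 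The key claim is then that $B \setminus \{b_m\} = \{a, b_1, \dots, b_{m-1}\}$ is an inclusion-maximal transitive subtournament of the dominator set $\dom(b_m)$, which together with $\max_\prec(B\setminus\{b_m\}) = \{a\}$ shows $a \in \ba(\dom(b_m))$.

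To see the key claim, note first that $a, b_1, \dots, b_{m-1}$ all dominate $b_m$, so $B \setminus \{b_m\} \subseteq \dom(b_m)$, and this set is transitive with maximal element $a$. Suppose it were not inclusion-maximal in $\dom(b_m)$, so that some $x \in \dom(b_m)$ with $x \notin B \setminus \{b_m\}$ (hence $x \notin B$, since $x \neq b_m$) could be added while preserving transitivity. As $x \succ b_m$ and every element of $B \setminus \{b_m\}$ dominates $b_m$, appending $b_m$ at the bottom of the transitive order on $(B\setminus\{b_m\}) \cup \{x\}$ yields a transitive subtournament $B \cup \{x\}$ of $T$ that properly contains $B$, contradicting the maximality of $B$. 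This proves the key claim. Finally, $b_m \notin \dom(b_m)$ gives $|\dom(b_m)| < |A|$, while $a \in \dom(b_m)$ keeps $\dom(b_m)$ non-empty so that strong retentiveness is applicable; the induction hypothesis yields $a \in S(\dom(b_m))$, and strong retentiveness then gives $a \in S(\dom(b_m)) \subseteq S(T)$, completing the induction. The only genuine obstacle is the key claim, and even that reduces to the elementary observation that one may always reattach the bottom element $b_m$ of a maximal transitive subtournament; the remaining care is purely bookkeeping (the bound $m \geq 1$ and the non-emptiness of $\dom(b_m)$, both verified above).
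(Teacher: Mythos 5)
Your proof is correct, and it rests on the same two pillars as the paper's: the maximality of the transitive set witnessing $a\in\ba(T)$, and repeated application of strong retentiveness along the chain of dominator sets of its elements. The packaging differs, though. The paper avoids your key claim entirely: writing the maximal transitive set as $[Q,a]$ with $Q=\{q_1,\dots,q_n\}$, it observes that $\bigcap_{i=1}^n\dom(q_i)=\{a\}$ (any $b\ne a$ dominating all of $Q$ would let one of $[Q\cup\{b\},a]$ or $[Q\cup\{a\},b]$ properly extend $[Q,a]$), and then concludes $\{a\}=S(\{a\})\subseteq\dots\subseteq S(A)$ by one application of strong retentiveness per element of $Q$; no induction on the tournament order and no statement about $\ba$ of the intermediate dominator sets is needed, since the only invariant propagated is containment of choice sets and the chain terminates in a singleton. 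Your version instead carries the stronger invariant that $a$ lies in the Banks set of the current dominator set, which is exactly what forces you to prove the key claim that inclusion-maximality of the transitive chain survives restriction to $\dom(b_m)$. What your route buys is that the ``repeated application'' is made fully explicit: in the paper's argument one still has to check that the elements of $Q$ can be processed in an order (from the minimum of the transitive chain upward) in which each successive $q_i$ actually belongs to the intersection of the dominator sets taken so far, a point left implicit there. Both arguments are sound; yours is longer but leaves nothing to the reader, and your steps (i) and (iii) coincide with the paper's appeal to \lemref{lem:dcon}.
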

\begin{proof}
Let $S$ be a tournament solution that satisfies strong retentiveness and $T=(A,\succ)$ a tournament. We first show that $\ba(A)\subseteq S(A)$. For every $a\in \ba(A)$, there has to be maximal transitive set $[Q,a]\subseteq A$. Let $Q=\{q_1,q_2,\dots,q_n\}$. We show that $B=\bigcap_{i=1}^n \dom(q_i)=\{a\}$. Since $a\succ Q$, $a\in B$. Assume for contradiction that $b\in B$ with $b\ne a$. Then $b\succ Q$ and either $[Q\cup\{b\},a]$ or $[Q\cup\{a\},b]$ is a transitive set, which contradicts the maximality of $[Q,a]$. Since $S(B)=S(\{a\})=\{a\}$, the repeated application of strong retentiveness implies that $a\in S(A)$.
The statement now follows from Lemma~\ref{lem:dcon}.
\end{proof}


\begin{savequote}
\sffamily
Thus our solutions $S$ correspond to such ``standards of behavior'' that have an inner stability: once they are generally accepted they overrule everything else and no part of them can be overruled within the limits of the accepted standards.
\qauthor{J.~v. Neumann and O.~Morgenstern, 1944}
\end{savequote}

\section{Stable Sets}
\label{sec:minstablesets}

In this \extra{chapter}\extrapaper{section}, we propose a general method for refining any suitable solution concept~$S$ by formalizing the stability of sets of alternatives with respect to $S$.  This method is based on the notion of stable sets \citep{vNM44a} and generalizes covering sets as introduced by \citet{Dutt88a}. 

\subsection{Stability and Directedness}\label{sec:directedness}

The reason why we are interested in maximal---\ie undominated---alternatives is that dominated alternatives can be upset by other alternatives; they are unstable. The rationale behind stable sets is that this instability is only meaningful if an alternative is upset by something which itself is stable. Hence, a set of alternatives $B$ is said to be stable if it consists precisely of those alternatives not upset by $B$. In von Neumann and Morgenstern's original definition, $a$ is upset by $B$ if some element of $B$ dominates $a$. In our generalization, $a$ is upset by $B$ if $a\not\in S(B\cup \{a\})$ for some underlying solution concept $S$.\footnote{Von Neumann and Morgenstern's definition can be seen as the special case where $a$ is upset by $B$ if $a\not\in \max_\prec(B\cup\{a\})$.}

As an alternative to this fixed-point definition, which will be formalized in \coref{cor:fpdef}, stable sets can be seen as sets that comply with internal and external stability in some well-defined way. 
First, there should be no reason to restrict the selection by excluding some alternative from it and, secondly, there should be an argument against each proposal to include an outside alternative into the selection.\footnote{A large number of solution concepts in the social sciences spring from similar notions of internal and/or external stability \citep[see, \eg][]{vNM44a,Nash51a,Shap64a,Schw86a,Dutt88a,BaWe91a,DuLe96a}. 
\citet{Wils70a} refers to stability as the \emph{solution property}.} In our context, external stability with respect to some tournament solution $S$ is defined as follows.

\begin{definition}\label{def:sstable}
Let $S$ be a tournament solution and $T=(A,{\succ})$ a tournament.  Then, $B\subseteq A$ is \emph{externally stable} in $T$ with respect to tournament solution $S$ (or \emph{$S$-stable}) if $a\notin S(B\cup\{a\})$ for all~$a\in A\setminus B$.  The set of $S$-stable sets for a given tournament $T=(A,{\succ})$ will be denoted by $\mathcal{S}_S(T)=\{ B\subseteq A\mid B \text{ is $S$-stable in $T$} \}$.
\end{definition}
Externally stable sets are guaranteed to exist since the set of all alternatives $A$ is trivially $S$-stable in $(A,{\succ})$ for every $S$.  We say that a set $B\subseteq A$ is \emph{internally stable} with respect to $S$ if $S(B)=B$.
We will focus on external stability for now because we will see later that certain conditions imply the existence of a unique minimal externally stable set, which also satisfies internal stability. We define $\ms{S}(T)$ to be the tournament solution that returns the union of all \emph{inclusion-minimal} $S$-stable sets in $T$, \ie the union of all $S$-stable sets that do not contain an $S$-stable set as a proper subset.

\begin{definition}\label{def:minimalsstable}
Let $S$ be a tournament solution. Then, the tournament solution $\ms{S}$ is defined as
\[
\ms{S}(T) = \bigcup \min_\subseteq(\mathcal{S}_S(T))\text{.}
\]
\end{definition}
It is easily verified that $\ms{S}$ is well-defined as a tournament solution as there are no $S$-stable sets that do not contain the Condorcet winner whenever one exists.
We will only be concerned with tournament solutions~$S$ that (presumably) admit a \emph{unique} minimal $S$-stable set in any tournament. It turns out it is precisely this property that is most difficult to prove for all but the simplest tournament solutions.
A tournament $T$ contains a \emph{unique} minimal $S$-stable set if and only if $\mathcal{S}_S(T)$ is a \emph{directed} set with respect to set inclusion, \ie for all sets $B,C\in\mathcal{S}_S(T)$ there is a set $D\in\mathcal{S}_S(T)$ contained in both $B$ and $C$. We say that $\mathcal{S}_S$ is directed when $\mathcal{S}_S(T)$ is a directed set for all tournaments $T$. 
Throughout this \doc, directedness of a set of sets $\mathcal{S}$ is shown by proving the stronger property of closure under intersection, \ie $B\cap C\in \mathcal{S}$ for all $B,C\in \mathcal{S}$.
A set of sets $\mathcal{S}$ \emph{pairwise intersects} if $B\cap C\ne \emptyset$ for all $B,C\in\mathcal{S}$.
We will prove that, for every concept of qualified subsets $\mathcal{Q}$, $\mathcal{S}_{S_\mathcal{Q}}$ is closed under intersection if and only if $\mathcal{S}_{S_\mathcal{Q}}$ pairwise intersects. 
In order to improve readability, we will use the short notation $\mathcal{S}_{\mathcal{Q}}$ for $\mathcal{S}_{S_\mathcal{Q}}$.

\begin{lemma}\label{lem:pairwise}
 Let $\mathcal{Q}$ be a concept of qualified subsets. Then,  $\mathcal{S}_{\mathcal{Q}}$ is closed under intersection if and only if $\mathcal{S}_{\mathcal{Q}}$ pairwise intersects.
\end{lemma}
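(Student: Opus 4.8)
The plan is to prove the non-trivial direction: if $\mathcal{S}_{\mathcal{Q}}$ pairwise intersects (for a fixed concept of qualified subsets $\mathcal{Q}$, in every tournament), then $\mathcal{S}_{\mathcal{Q}}$ is closed under intersection. The converse is immediate, since if $B\cap C\in\mathcal{S}_{\mathcal{Q}}(T)$ then in particular $B\cap C$ is $S_{\mathcal{Q}}$-stable hence non-empty (it contains the Condorcet winner of $B\cap C$, or at any rate is non-empty because $S_{\mathcal{Q}}$ always returns a non-empty set, and an $S_{\mathcal{Q}}$-stable set is non-empty unless... — actually one should double-check: the empty set is $S$-stable in $T$ only if $a\notin S(\{a\})$ for all $a$, which never holds, so $\emptyset$ is never stable; hence closure under intersection forces pairwise intersection). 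So the content is entirely in the forward direction.

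First I would fix $T=(A,{\succ})$ and two $S_{\mathcal{Q}}$-stable sets $B,C$, and set $D=B\cap C$; by hypothesis $D\neq\emptyset$. I must show $D$ is $S_{\mathcal{Q}}$-stable, i.e. for every $a\in A\setminus D$ that $a\notin S_{\mathcal{Q}}(D\cup\{a\})$. The key tool is the characterization of non-membership for qualified-subset solutions noted just before Proposition~\ref{pro:qwspmon}: $a\notin S_{\mathcal{Q}}(E)$ iff for every qualified subset $[Q,a]\in\mathcal{Q}(E)$ there is some $b\in E$ with $[Q\cup\{a\},b]\in\mathcal{Q}(E)$. So I take an arbitrary $[Q,a]\in\mathcal{Q}(D\cup\{a\})$ with $Q\subseteq D$, and I must find a "witness" $b\in D$ with $[Q\cup\{a\},b]\in\mathcal{Q}(D\cup\{a\})$, which by independence is the same as $[Q\cup\{a\},b]\in\mathcal{Q}(B\cup\{a\})$ etc. Now $a\in A\setminus D$ means $a\notin B$ or $a\notin C$; say $a\notin B$. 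Since $Q\subseteq D\subseteq B$, we have $[Q,a]\in\mathcal{Q}(B\cup\{a\})$ by independence, and $B$ is $S_{\mathcal{Q}}$-stable, so there is $b_1\in B$ with $[Q\cup\{a\},b_1]\in\mathcal{Q}(B\cup\{a\})$. If $b_1\in D$ we are done for this $[Q,a]$. The problem is when $b_1\in B\setminus C$: then $b_1$ is a legitimate witness against $a$ using $B$, but it need not lie in $D$.

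This is exactly where I expect the main obstacle to be, and where pairwise intersection must be brought to bear — presumably through a "double-chase" or a minimality/merging argument. The natural move: having obtained $[Q\cup\{a\},b_1]\in\mathcal{Q}(B\cup\{a\})$ with $b_1\notin C$, I now want to play the analogous game on the $C$ side. If $a\notin C$ as well, then by $C$-stability applied to $[Q,a]\in\mathcal{Q}(C\cup\{a\})$ I get $b_2\in C$ with $[Q\cup\{a\},b_2]\in\mathcal{Q}(C\cup\{a\})$; if $b_2\in D$ we're done, otherwise $b_2\in C\setminus B$, and now I would try to iterate, building up a chain or invoking the fusion condition to merge the various qualified subsets $[Q\cup\{a\},b_1]$, $[Q\cup\{a\},b_2]$, $\dots$ into a larger qualified subset, eventually forcing a witness inside $D$ or else contradicting stability of $B$ or $C$ — this is where pairwise intersection of $\mathcal{S}_{\mathcal{Q}}$ (not just of $B$ and $C$, but of auxiliary stable sets one constructs, e.g. dominator sets, which are stable-ish by strong retentiveness, Lemma~\ref{lem:dcon}) should enter. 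Since each step strictly enlarges the qualified set $Q\cup\{a\}\cup\{b_1,\dots\}$ and qualified sets in a given tournament are finite, the process terminates, and at termination the witness it produces must lie in $D$. I would organize this as an induction on $|A\setminus D|$ or on the size of the qualified set being built. The case where $a\in B$ but $a\notin C$ (so $a\notin D$ still) is symmetric to the above with the roles of $B$ and $C$ exchanged; the case $a\notin B$ and $a\notin C$ gives two independent witnesses to play off against each other, which is the cleanest; so the real work is the asymmetric case $a\in B\setminus C$, handled by the $C$-side chase alone plus fusion.
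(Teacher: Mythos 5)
Your treatment of the easy direction is fine and matches the paper's (the empty set is never stable, so closure under intersection forces pairwise intersection). But the hard direction is not actually proved: everything after you identify the obstacle (``the witness $b_1$ may lie in $B\setminus C$'') is a plan rather than an argument (``presumably through a double-chase\dots'', ``I would try to iterate\dots'', ``should enter''). The plan, as sketched, runs into concrete trouble. When you pass to the $C$-side you must first shrink the accumulated qualified set to one contained in $C\cup\{b\}$ (discarding the elements of $B\setminus C$ picked up so far), and the new witness you obtain is only guaranteed to dominate that shrunken set --- not the alternatives discarded, and in particular not necessarily $a$ itself. So the chain does not assemble into a single growing qualified set, and even a terminating process need not terminate with a witness that dominates $Q\cup\{a\}$. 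Invoking fusion does not repair this: fusion requires the hypothesis $Q_1\setminus Q_2\succ Q_2$ together with the cardinality proviso, neither of which is verified for the sets you would want to merge. Finally, you never actually exhibit where the \emph{hypothesis} that $\mathcal{S}_\mathcal{Q}$ pairwise intersects gets used; the appeal to ``auxiliary stable sets, e.g.\ dominator sets'' is not backed by a construction.

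The paper avoids the direct witness-chase entirely and argues by contraposition: assuming $B_1,B_2\in\mathcal{S}_\mathcal{Q}(T)$ with $C=B_1\cap B_2$ \emph{not} stable, it takes a maximal qualified subset $[Q,a]\in\max_\subseteq(\mathcal{Q}(C\cup\{a\}))$ witnessing the instability, defines $B_i'=\{b\in B_i\mid b\succ Q\}$, and shows that $B_1'$ and $B_2'$ are non-empty, disjoint, and each externally stable in the subtournament on $B_1'\cup B_2'$ --- thereby producing two disjoint stable sets in \emph{some} tournament, which is exactly the negation of the (globally quantified) pairwise-intersection property. Note that this cross-tournament move is essential: the lemma is a statement about the tournament function $\mathcal{S}_\mathcal{Q}$ over all tournaments, not a per-tournament equivalence. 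If you want to salvage a direct argument you would have to build comparable disjoint-stable-set structure yourself; as it stands, the core of the lemma is missing from your proposal.
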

\begin{proof}
The direction from left to right is straightforward since the empty set is not stable.
The opposite direction is shown by contraposition, \ie we prove that $\mathcal{S}_{\mathcal{Q}}$ does not pairwise intersect if $\mathcal{S}_{\mathcal{Q}}$ is not closed under intersection.
Let $T=(A,\succ)$ be a tournament and $B_1,B_2 \in \mathcal{S}_{\mathcal{Q}}(T)$ be two sets such that $C=B_1\cap B_2\not\in \mathcal{S}_{\mathcal{Q}}(T)$. 
Since $C$ is not $S_{\mathcal{Q}}$-stable, there has to be $a\in A\setminus C$ such that $a\in S_{\mathcal{Q}}(C\cup\{a\})$. In other words, there has to be a set $Q\subseteq C$ such that $[Q,a]\in \max_\subseteq(\mathcal{Q}(C\cup\{a\}))$.
Define
\[
B_1'=\{b\in B_1\mid b\succ Q\} \text{ and } B_2'=\{b\in B_2\mid b\succ Q\}\text{.}
\]
Clearly, $(B_1'\setminus B_2')\cap C=\emptyset$ and $(B_2'\setminus B_1')\cap C=\emptyset$.
Assume without loss of generality that $a\not\in B_1$. It follows from the stability of $B_1$, that $B_1$ has to contain an alternative $b_1$ such that  $b_1\succ [Q,a]$. Hence, $B_1'$ is not empty.
Next, we show that $B_1'\cap B_2'=\emptyset$. Assume for contradiction that there is some $b\in B_1'\cap B_2'$. If $b\succ a$, independence implies that $[Q\cup\{a\},b]\in \mathcal{Q}(C\cup\{a\})$, which contradicts the fact that $[Q,a]$ is a maximal qualified subset in $C\cup\{a\}$. If, on the other hand, $a\succ b$, the set $[Q\cup\{b\},a]$ is isomorphic to $[Q\cup\{a\},b_1]$, which is a qualified subset of $B_1\cup\{a\}$. Thus, $[Q\cup\{b\},a]\in \mathcal{Q}(C\cup\{a\})$, again contradicting the maximality of $[Q,a]$. 
Independence, the isomorphism of $[Q,a]$ and $[Q,b_1]$, and the stability of $B_2$ further require that there has to be an alternative $b_2\in B_2$ such that $b_2\succ [Q,b_1]$. Hence, $B_1'$ and $B_2'$ are disjoint and non-empty.

Let $a'\in B_2'$ and $R$ be a maximal subset of $B_1'\cup Q$ such that $[R,a']\in \mathcal{Q}(B_1'\cup Q \cup \{a'\})$. 
We claim that $Q$ has to be contained in $R$. Assume for contradiction that there exists some $b\in Q\setminus R$. Clearly, $[Q,a]$ and $[Q,a']$ are isomorphic. It therefore follows from independence that $[Q,a']\in \mathcal{Q}(B_1'\cup Q \cup \{a'\})$ and from closure that $[(Q\cap R)\cup\{b\},a']\in \mathcal{Q}(B_1'\cup Q \cup \{a'\})$. 
Due to the stability of $B_1$, $[R,a']$ is not a maximal qualified subset in $B_1\cup\{a'\}$, \ie there exists a qualified subset that contains more elements. We may thus merge the qualified subsets $[R,a']$ and $[(Q\cap R)\cup\{b\},a']$ according to the fusion condition because $R\setminus Q\succ Q$ and consequently $R\setminus Q\succ (Q\cap R)\cup\{b\}$. We then have that $[R\cup\{b\},a']\in \mathcal{Q}(B_1'\cup Q \cup \{a'\})$, which yields a contradiction because $R$ was assumed to be a maximal set such that $[R,a']\in \mathcal{Q}(B_1'\cup Q \cup \{a'\})$. Hence, $Q\subseteq R$. 
Due to the stability of $B_1$ in $T$, there has to be a $c\in B_1$ such that $c\succ [R,a']$. Since $B_1'$ contains all alternatives  in $B_1$ that dominate $Q\subseteq R$, it also contains $c$. Independence then implies that $[R,a']\not\in \max_\subseteq(\mathcal{Q}_k(B_1'\cup Q \cup \{a'\}))$.

Thus, $B_1'\cup Q$ is stable in $B_1'\cup B_2'\cup Q$. Since $Q$ is contained in every maximal set $R\subseteq B_1'\cup Q$ such that $[R,a']\in \mathcal{Q}(B_1'\cup Q \cup \{a'\})$ for some $a'\in B_2'$, $B_1'$ (and by an analogous argument $B_2'$) remains stable when removing $Q$. This completes the proof because $B_1'$ and $B_2'$ are two disjoint $S_\mathcal{Q}$-stable sets in $B_1'\cup B_2'$.
\end{proof}

\citeauthor{Dutt88a} has shown by induction on the tournament order that tournaments admit no disjoint $S_\mathcal{M}$-stable sets (so-called \emph{covering sets}).
\begin{theorem}[\citealp{Dutt88a}]\label{thm:Dutta}
$\mathcal{S}_{\mathcal{M}}$ pairwise intersects.
\end{theorem}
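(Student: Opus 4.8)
The plan is to prove the statement --- no tournament has two disjoint $\mathcal{M}$-stable sets --- by induction on the order of the tournament, in the spirit of Dutta's original argument. Throughout I would use the elementary reformulation of $\mathcal{M}$-stability coming from $S_\mathcal{M}=\uc$: a set $B$ is $\mathcal{M}$-stable in $T=(A,{\succ})$ if and only if every $a\in A\setminus B$ is \emph{covered within $B\cup\{a\}$}, i.e.\ there is $b\in B$ with $b\succ a$ and $D_B(a)\subseteq D_B(b)$. Note in particular that if $B$ is $\mathcal{M}$-stable in $T$ and $B\subseteq A'\subseteq A$, then $B$ is still $\mathcal{M}$-stable in $T|_{A'}$, since external stability only becomes easier as the ambient set shrinks.

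For the inductive step, assume the claim for all smaller tournaments and suppose $T=(A,{\succ})$ has disjoint $\mathcal{M}$-stable sets $B$ and $C$. If $A\ne B\cup C$, then $T|_{B\cup C}$ is a strictly smaller tournament with the same two disjoint $\mathcal{M}$-stable sets, contradicting the induction hypothesis; so $A=B\sqcup C$. Likewise $|B|\ge 2$: if $B=\{b\}$ then $b$ must dominate every other alternative to be $\mathcal{M}$-stable, hence is a Condorcet winner of $T$, but then $b$ is not covered within $C\cup\{b\}$, so $C$ fails external stability. Symmetrically $|C|\ge 2$.

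Next I would try to delete a single alternative and invoke the induction hypothesis. Deleting $a\in B$ keeps $C$ $\mathcal{M}$-stable, and keeps $B\setminus\{a\}$ $\mathcal{M}$-stable in $T|_{A\setminus\{a\}}$ \emph{unless} the removal of $a$ leaves some $y\in C$ uncovered within $(B\setminus\{a\})\cup\{y\}$ --- call such an $a$ \emph{indispensable for $y$}. If some $a\in B$ is indispensable for no $y\in C$, then $B\setminus\{a\}$ and $C$ are disjoint $\mathcal{M}$-stable sets in the smaller tournament $T|_{A\setminus\{a\}}$, a contradiction; so we are reduced to the case in which every $a\in B$ is indispensable for some $\sigma(a)\in C$ and, symmetrically, every $c\in C$ is indispensable for some $\tau(c)\in B$. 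Since two distinct alternatives cannot both be indispensable for the same alternative (a short argument using the inclusions $D_B(y)\subseteq D_B(a)$), the maps $\sigma$ and $\tau$ are injective, hence bijections; in particular $|B|=|C|$ and $\tau\circ\sigma$ is a permutation of $B$.

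It remains to rule out this last configuration, and that is the step I expect to be the main obstacle; everything before it is routine bookkeeping with the reformulation of $\mathcal{M}$-stability. Following $\tau\circ\sigma$ around one of its cycles $a_1,\dots,a_k$ and writing $x_i=\sigma(a_i)$ (so $a_{i+1}=\tau(x_i)$), indispensability yields the cyclic chain $a_1\succ x_1\succ a_2\succ x_2\succ\cdots\succ a_k\succ x_k\succ a_1$ together with the inclusions $D_B(x_i)\subseteq D_B(a_i)$ and $D_C(a_{i+1})\subseteq D_C(x_i)$. Chasing these inclusions around the cycle forces successively more dominations among the $a_i$ and among the $x_i$ --- first $a_i\succ a_{i+1}$ and $x_i\succ x_{i+1}$, then $a_i\succ x_{i+1}$ and $x_i\succ x_{i+2}$, and so on --- until the induced pattern is not realizable: for short cycles one gets an immediate violation of asymmetry, and in general a counting contradiction, the forced out-degrees being incompatible with the $k(k-1)/2$ edges available among the $a_i$ (and among the $x_i$). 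Making this cyclic argument go through uniformly in $k$ is where the real work lies.
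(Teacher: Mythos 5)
The paper offers no proof of this theorem: it is imported wholesale from Dutta (1988), with only the one-line remark that Dutta argues by induction on the order of the tournament. So there is nothing in the paper to compare your argument against line by line; what you have produced is a self-contained reconstruction, and after checking it I believe it is correct. The preliminary reductions are all sound: restricting to $B\cup C$ and invoking the induction hypothesis, ruling out singletons via the Condorcet-winner observation, and --- the key point --- noting that if $b\neq a$ covers $y$ in $B\cup\{y\}$ then $b$ still covers $y$ after $a$ is deleted (since $D_{B\setminus\{a\}}(y)=D_B(y)\setminus\{a\}\subseteq D_B(b)\setminus\{a\}$), so that ``$a$ is indispensable for $y$'' is equivalent to ``$a$ is the \emph{unique} coverer of $y$ in $B\cup\{y\}$.'' This gives the injectivity of $\sigma$ and $\tau$, hence $|B|=|C|$ and the bijections, exactly as you say.

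The step you flag as the main obstacle does go through, and more uniformly than you seem to fear. Read the two covering conditions as four propagation rules: from $D_B(x_i)\subseteq D_B(a_i)$ you get, for $b\in B$, both ``$x_i\succ b\Rightarrow a_i\succ b$'' and the contrapositive ``$b\succ a_i\Rightarrow b\succ x_i$''; from $D_C(a_{i+1})\subseteq D_C(x_i)$ you get, for $y\in C$, both ``$a_{i+1}\succ y\Rightarrow x_i\succ y$'' and ``$y\succ x_i\Rightarrow y\succ a_{i+1}$.'' Starting from $a_i\succ x_i$ and $x_i\succ a_{i+1}$, one proves by induction on $m$ that, for all $i$ (indices mod $k$), $a_i\succ a_{i+j}$, $x_i\succ x_{i+j}$ and $x_i\succ a_{i+j}$ for $1\le j\le m$, and $a_i\succ x_{i+j}$ for $0\le j\le m-1$: the contrapositive rules applied to $x_{i-m}\succ x_i$ and $a_{i-m}\succ a_i$ yield $x_i\succ a_{i+m+1}$ and $a_i\succ x_{i+m}$, and feeding these back through the direct rules yields $a_i\succ a_{i+m+1}$ and $x_i\succ x_{i+m+1}$. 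The reach thus grows by one at every round and never stalls, so eventually $a_i\succ a_{i+j}$ and $a_{i+j}\succ a_{i+j+(k-j)}=a_i$ (or, for the out-degree count, $km\le k(k-1)/2$ fails once $m>(k-1)/2$), a contradiction for every $k\ge 1$. So your cyclic chase closes uniformly in $k$, and the only thing missing from your write-up is this explicit bookkeeping.
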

\citet{Dutt88a} also showed that covering sets are closed under intersection, which now also follows from \lemref{lem:pairwise}.\footnote{As \citeauthor{Dutt88a}'s definition requires a stable set to be internally and externally stable, he actually proves that the intersection of any pair of coverings sets \emph{contains} a covering set. A simpler proof, which shows that \emph{externally} $S_\mathcal{M}$-stable sets are closed under intersection, is given by \citet{Lasl97a}.}

Naturally, finer solution concepts also yield smaller minimal stable sets (if their uniqueness is guaranteed).

\begin{proposition}\label{pro:stableinclusion}
Let $S$ and $S'$ be two tournament solutions such that $\mathcal{S}_{S'}$ is directed and $S'\subseteq S$. Then, $\ms{S}'\subseteq \ms{S}$ and $\mathcal{S}_S$ pairwise intersects.  
\end{proposition}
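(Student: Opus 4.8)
I want to prove two things: (1) $\ms{S}' \subseteq \ms{S}$, and (2) $\mathcal{S}_S$ pairwise intersects. The natural first step is to relate the $S$-stable sets to the $S'$-stable sets using $S' \subseteq S$. I claim that every $S'$-stable set is $S$-stable, i.e.\ $\mathcal{S}_{S'}(T) \subseteq \mathcal{S}_S(T)$ for every tournament $T$. Indeed, if $B$ is $S'$-stable and $a \in A \setminus B$, then $a \notin S'(B \cup \{a\})$; since $S'(B\cup\{a\}) \supseteq$ \dots wait, that's the wrong direction. Let me reconsider: $a \notin S'(B\cup\{a\})$ does \emph{not} immediately give $a \notin S(B\cup\{a\})$ because $S$ is the \emph{larger} solution. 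So this direction fails. Instead I should argue the opposite containment: every $S$-stable set is $S'$-stable. If $B$ is $S$-stable and $a \in A\setminus B$, then $a \notin S(B\cup\{a\})$, and since $S'(B\cup\{a\}) \subseteq S(B\cup\{a\})$ we get $a \notin S'(B\cup\{a\})$. Hence $\mathcal{S}_S(T) \subseteq \mathcal{S}_{S'}(T)$.

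Now (2) follows quickly: since $\mathcal{S}_{S'}$ is directed, in particular $\mathcal{S}_{S'}(T)$ pairwise intersects (any two stable sets contain a common stable set, which is non-empty as it must contain the Condorcet winner — or simply because $\emptyset$ is never stable when $A\neq\emptyset$, and a directed family whose members are all non-empty pairwise intersects). Because $\mathcal{S}_S(T) \subseteq \mathcal{S}_{S'}(T)$, any two members of $\mathcal{S}_S(T)$ are members of $\mathcal{S}_{S'}(T)$ and hence intersect. So $\mathcal{S}_S$ pairwise intersects.

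For (1): directedness of $\mathcal{S}_{S'}$ means $\ms{S}'(T)$ is the \emph{unique} minimal $S'$-stable set in $T$; call it $M'$. I want to show $M' \subseteq \ms{S}(T)$. Take any minimal $S$-stable set $M$ in $T$ (there is at least one, and $\ms{S}(T)$ is the union of all of them). By the containment above, $M \in \mathcal{S}_{S'}(T)$, so $M$ is an $S'$-stable set, hence it contains the unique minimal $S'$-stable set: $M' \subseteq M \subseteq \ms{S}(T)$. Since $\ms{S}'(T) = M'$, this gives $\ms{S}'(T) \subseteq \ms{S}(T)$, as desired.

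**Main obstacle.** The only genuinely subtle point is getting the direction of the inclusion $\mathcal{S}_S \subseteq \mathcal{S}_{S'}$ right — it is easy to reach for the wrong one since finer solutions give \emph{smaller} stable sets, which might lead one to expect $\mathcal{S}_{S'}$ to be the smaller family. In fact it is the reverse: a finer $S'$ imposes a \emph{weaker} external-stability requirement ($a \notin S'(B\cup\{a\})$ is easier to satisfy than $a\notin S(B\cup\{a\})$), so \emph{more} sets are $S'$-stable. Once this is pinned down, everything else is immediate from the definitions and from the hypothesis that $\mathcal{S}_{S'}$ is directed; no combinatorial work comparable to \lemref{lem:pairwise} is needed here. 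I should also note in passing that well-definedness of $\ms{S}$ and $\ms{S}'$ as tournament solutions (non-emptiness, containing the Condorcet winner) is already established, so I may invoke it without comment.
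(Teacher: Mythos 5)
Your proof is correct and follows essentially the same route as the paper: the key observation in both is that every $S$-stable set is $S'$-stable (since $S'(B\cup\{a\})\subseteq S(B\cup\{a\})$), after which any minimal $S$-stable set must contain the unique minimal $S'$-stable set, and two disjoint $S$-stable sets would contradict the directedness of $\mathcal{S}_{S'}$. Your explicit warning about the direction of the inclusion $\mathcal{S}_S\subseteq\mathcal{S}_{S'}$ is a nice touch, but the substance matches the paper's proof.
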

\begin{proof}
The statements follow from the simple fact that every $S$-stable set is also $S'$-stable.  Let $B\subseteq A$ be a minimal $S$-stable set in tournament $(A,{\succ})$. Then, $a\not\in S(B\cup\{a\})$ for every $a\in A\setminus B$ and, due to the inclusion relationship, $a\not\in S'(B\cup\{a\})\subseteq S(B\cup\{a\})$. As a consequence, $B$ is $S'$-stable and has to contain the unique minimal $S'$-stable set since $\mathcal{S}_{S'}$ is directed.
$\mathcal{S}_S$ pairwise intersects because two disjoint $S$-stable sets would also be $S'$-stable, which contradicts the directedness of $\mathcal{S}_{S'}$.
\end{proof}

As a corollary of the previous statements, the set of $S_{\mathcal{M}_k}$-stable sets for every $k$ is closed under intersection.

\begin{theorem}\label{thm:uniquekcovering}
$\mathcal{S}_{\mathcal{M}_k}$ is closed under intersection for all $k\in\mathbb{N}$.
\end{theorem}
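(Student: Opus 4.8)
The statement follows by chaining three earlier results, with no new combinatorial work required: Dutta's theorem (\thmref{thm:Dutta}) supplies the base case, \lemref{lem:pairwise} converts between ``pairwise intersects'' and ``closed under intersection'' for any concept of qualified subsets, and \propref{pro:stableinclusion} propagates pairwise intersection downward along the refinement relation. So the proof is essentially a bookkeeping argument.

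\textbf{Step 1.} First I would record that $\mathcal{M}$ is a concept of qualified subsets, so \lemref{lem:pairwise} applies to it. Combining \lemref{lem:pairwise} with \thmref{thm:Dutta}, $\mathcal{S}_{\mathcal{M}}$ is closed under intersection, and in particular $\mathcal{S}_{\mathcal{M}}$ is directed (closure under intersection trivially implies directedness, using that the empty set is never stable so intersections of stable sets that are stable are non-empty).

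\textbf{Step 2.} Next I would invoke \propref{pro:inclusions} with $\mathcal{Q}=\mathcal{M}$, which gives $S_{\mathcal{M}}\subseteq S_{\mathcal{M}_k}$ for every $k\in\mathbb{N}$. Since $\mathcal{M}_k$ is also a concept of qualified subsets (as noted right after \defref{def:qualified}), both $S_{\mathcal{M}}$ and $S_{\mathcal{M}_k}$ are genuine tournament solutions, so I may apply \propref{pro:stableinclusion} with $S'=S_{\mathcal{M}}$ and $S=S_{\mathcal{M}_k}$: the hypothesis that $\mathcal{S}_{S'}=\mathcal{S}_{\mathcal{M}}$ is directed was established in Step~1, and $S'\subseteq S$ was just established. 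The conclusion is that $\mathcal{S}_{S_{\mathcal{M}_k}}$, i.e.\ $\mathcal{S}_{\mathcal{M}_k}$, pairwise intersects.

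\textbf{Step 3.} Finally, apply \lemref{lem:pairwise} once more, this time to the concept of qualified subsets $\mathcal{M}_k$: since $\mathcal{S}_{\mathcal{M}_k}$ pairwise intersects, it is closed under intersection, as claimed. The only point to be careful about is that every link in this chain is legitimate only because $\mathcal{M}$, $\mathcal{M}_k$, and more generally $\mathcal{Q}_k$ have all been verified to satisfy closure, independence, and fusion; there is no genuine obstacle beyond that verification, which the text has already carried out. (The same argument verbatim, replacing $\mathcal{M}$ by an arbitrary concept of qualified subsets $\mathcal{Q}$ whose stable sets pairwise intersect, shows $\mathcal{S}_{\mathcal{Q}_k}$ is closed under intersection as well, which is worth remarking since it feeds into \thmref{thm:minstableproperties}.)
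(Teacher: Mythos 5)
Your proposal is correct and follows exactly the paper's own argument: \thmref{thm:Dutta} plus \lemref{lem:pairwise} give directedness of $\mathcal{S}_{\mathcal{M}}$, \propref{pro:inclusions} and \propref{pro:stableinclusion} yield that $\mathcal{S}_{\mathcal{M}_k}$ pairwise intersects, and a final application of \lemref{lem:pairwise} concludes. No differences worth noting.
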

\begin{proof}
Let $k\in\mathbb{N}$. We know from \propref{pro:inclusions} that $S_\mathcal{M}\subseteq S_{\mathcal{M}_k}$ and from \thmref{thm:Dutta} and \lemref{lem:pairwise} that $\mathcal{S}_\mathcal{M}$ is directed. \propref{pro:stableinclusion} implies that $\mathcal{S}_{\mathcal{M}_k}$ pairwise intersects. The statement then straightforwardly follows from \lemref{lem:pairwise}.
\end{proof}

Interestingly, $\mathcal{S}_{\mathcal{M}_2}$, the set of all dominant sets, is not only closed under intersection, but in fact totally ordered with respect to set inclusion. 

We conjecture that the set of all $S_{\mathcal{M}^*}$-stable sets also pairwise intersects and thus admits a unique minimal element. However, the combinatorial structure of transitive subtournaments within tournaments is extraordinarily rich \citep[see, \eg][]{Woeg03a,GaMn10a} and it seems that a proof of the conjecture would be significantly more difficult than \citeauthor{Dutt88a}'s. 
As will be shown in \charef{sec:teq}, the conjecture that $S_{\mathcal{M}^*}$-stable sets pairwise intersect is a weakened version of a conjecture by \citet{Schw90a}.

\begin{conjecture}\label{con:me}
$\mathcal{S}_{\mathcal{M}^*}$ is closed under intersection.
\end{conjecture}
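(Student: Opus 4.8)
The plan is to reduce the statement to a pairwise‑intersection claim and then attempt an induction on the tournament order, in the spirit of \citeauthor{Dutt88a}'s argument for \thmref{thm:Dutta}. Since $\mathcal{M}^*$ is a concept of qualified subsets, \lemref{lem:pairwise} applies, so it suffices to show that $\mathcal{S}_{\mathcal{M}^*}$ pairwise intersects, i.e.\ that no tournament admits two disjoint $S_{\mathcal{M}^*}$-stable (``Banks-stable'') sets. Suppose for contradiction that one does and let $T=(A,{\succ})$ be a counterexample of minimum order, with $B_1,B_2\in\mathcal{S}_{\mathcal{M}^*}(T)$ disjoint. External $S$-stability of a set is inherited by every subtournament that still contains the set (the condition $a\notin S(B_i\cup\{a\})$ is required only for alternatives outside $B_i$, and $S(B_i\cup\{a\})$ depends only on the subtournament on $B_i\cup\{a\}$); hence $B_1$ and $B_2$ are both Banks-stable in the subtournament induced by $B_1\cup B_2$, and minimality of $T$ forces $A=B_1\cup B_2$. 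The task thus becomes to derive a contradiction from the assumption that a tournament is partitioned into two Banks-stable sets.

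The next step is to unfold Banks-stability concretely. Using the notation $[Q,a]$ from \secref{sec:maxsubsets}, $B$ is $S_{\mathcal{M}^*}$-stable in $T$ exactly when, for every transitive $Q\subseteq B$ with $[Q,a]\in\mathcal{M}^*(T)$ and $a\notin B$, the set $[Q,a]$ is not an inclusion-maximal transitive subset of $B\cup\{a\}$ — so either there is $b\in B$ with $b\succ\{a\}\cup Q$ (extending the chain upward), or $B$ supplies an element fitting into the chain below $a$. In particular (taking $Q=\emptyset$) every alternative outside $B$ is dominated by some element of $B$. Starting from a maximal transitive subset $U$ of $A=B_1\cup B_2$ whose maximum lies in, say, $B_1$, I would try to run a ``ping‑pong'' argument: stability of $B_2$ must extend the part of $U$ lying in $B_1$ by an element of $B_2$ above it, stability of $B_1$ must then respond with an element of $B_1$, and so on, building an ever longer strictly alternating transitive chain, which is impossible in a finite tournament. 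To glue together the chains coming from $B_1$ and $B_2$ one would invoke the fusion condition of \defref{def:qualified}, exactly as in the proof of \lemref{lem:pairwise}.

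The hard part — and the reason this is stated only as a conjecture — is precisely making such an inductive/structural step go through. \citeauthor{Dutt88a}'s proof for the covering set works because the covering relation is a subrelation of $\succ$: one only needs to track a single dominator of the offending alternative, and the interaction of the uncovered set with the top cycle provides enough leverage. For Banks-stability one must instead control entire inclusion-maximal transitive subtournaments, of which a tournament can have exponentially many with a highly intricate interleaving structure \citep{Woeg03a,GaMn10a}; moreover fusion only merges a chain with a \emph{dominating} chain, not with the arbitrary chains that arise when two maximal transitive sets drawn from $B_1$ and $B_2$ are interleaved, so the ping‑pong chain need not actually grow. A natural fallback is to prove instead the stronger statement that the minimal retentive set of every tournament is unique, since by \thmref{thm:teqsq} Schwartz's conjecture implies the present one; but that conjecture is at least as resistant. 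Absent a genuinely new idea for taming maximal transitive subtournaments, I expect the inductive step to be the real obstacle, and I would treat the statement as open.
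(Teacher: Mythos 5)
There is no proof in the paper to compare against: the statement is \conref{con:me}, which the paper explicitly leaves open, remarking only that the combinatorial structure of transitive subtournaments is ``extraordinarily rich'' and that a proof would be significantly harder than \citeauthor{Dutt88a}'s argument for covering sets. Your proposal correctly stops short of claiming a proof, and your framing matches the paper's own: by \lemref{lem:pairwise} it suffices to rule out two disjoint $S_{\mathcal{M}^*}$-stable sets, and your reduction to a minimal counterexample with $A=B_1\cup B_2$ is sound (external stability with respect to a tournament function does restrict to any subtournament containing the set, exactly as used at the end of the paper's proof of \lemref{lem:pairwise}). Your diagnosis of why the ping-pong/induction fails is also accurate: the fusion condition only merges a qualified set with a \emph{dominating} qualified set, so the alternating chain obtained by interleaving maximal transitive subsets of $B_1$ and $B_2$ need not be transitive or growing, and unlike the covering relation there is no single witness to track.

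Two points of contact with the paper that you could have cited as the only known partial progress: the paper reports a computer-assisted verification that $\mathcal{S}_{\mathcal{M}_4^*}$ is closed under intersection (the weakest instance not already implied by \thmref{thm:Dutta}), which it says yielded no insight toward the general case; and your proposed fallback---deriving the statement from Schwartz's conjecture---is precisely \thmref{thm:teqsq}\textit{(ii)}, so it is a valid implication but, as you note, replaces one open problem with a harder one. In short, your assessment that the inductive step is the genuine obstacle and that the statement should be treated as open is exactly the paper's position.
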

Using \lemref{lem:pairwise}, the conjecture entails that $\mathcal{S}_{\mathcal{M}_k^*}$ for all $k\in\mathbb{N}$ is also closed under intersection.
Since tournaments with less than four alternatives may not contain a maximal element and a cycle at the same time, this trivially holds for $k\le 3$.
The weakest version of \conref{con:me} that is no implied by \thmref{thm:Dutta} is that $\mathcal{S}_{\mathcal{M}_4^*}$ is closed under intersection. We were able to show this by reducing it to a large, but finite, number of cases that were checked using a computer. Unfortunately, this exercise did not yield any insight on how to prove \conref{con:me}.

Two well-known examples of minimal stable sets are the top cycle of a tournament, which is the minimal stable set with respect to $S_{\mathcal{M}_2}$, and the minimal covering set, which is the minimal stable set with respect to $S_{\mathcal{M}}$.

\paragraph{Minimal dominant set.} The \emph{minimal dominant set} (or top cycle) of a tournament $T=(A,{\succ})$ is given by $\tc(T)=\ms{S}_{\mathcal{M}_2}(T)=\ms{\cnl}$, \ie it is the smallest set $B$ such that $B\succ A\setminus B$ \citep{Good71a,Smit73a}.

\paragraph{Minimal covering set.} The \emph{minimal covering set} of a tournament $T$ is given by $\mc(T)=\ms{S}_{\mathcal{M}}(T)=\ms{\uc}$, \ie it is the smallest set $B$ such that for all $a\in A\setminus B$, there exists $b\in B$ so that every alternative in $B$ that is dominated by $a$ is also dominated by~$b$ \citep{Dutt88a}.
\bigskip

The proposed methodology also suggests the definition of a new tournament solution that has not been considered before in the literature.

\paragraph{Minimal extending set.} The \emph{minimal extending set} of a tournament $T$ is given by $\me[T]=\ms{S}_{\mathcal{M}^*}(T)=\ms{\ba}$, \ie it is the smallest set $B$ such that no $a\in A\setminus B$ is the maximal element of a maximal transitive subset in $B\cup\{a\}$. 
\bigskip

The minimal extending set will be further analyzed in Section~\ref{sec:me}.

\subsection{Properties of Minimal Stable Sets}

If $\mathcal{S}_S$ is directed---and we will only be concerned with tournament solutions $S$ for which this is (presumably) the case---$\ms{S}$ satisfies a number of desirable properties.

\begin{proposition}\label{pro:stableaiz}
Let $S$ be a tournament solution such that $\mathcal{S}_S$ is directed. Then, $\ms{S}$ satisfies \wsp and \iua.
\end{proposition}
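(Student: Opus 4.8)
The plan is to establish the two properties separately, using throughout the fact that $\ms{S}(T)$ is, under directedness, a single set—namely the unique minimal $S$-stable set—which moreover is internally stable, i.e. $S(\ms{S}(T)) = \ms{S}(T)$. (Internal stability should follow because if $B = \ms{S}(T)$ and some $a \in B$ were missing from $S(B)$, then by external stability of $B$ with respect to itself one would hope to contradict minimality; this is the first small thing I would nail down, perhaps as a preliminary observation or by appeal to \coref{cor:fpdef}.)

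For \wsp, let $T = (A,\succ)$, let $B = \ms{S}(T)$ be the unique minimal $S$-stable set, and let $C \subseteq A$ with $B \subseteq C$. I must show $\ms{S}(C) \subseteq B$, i.e. that the unique minimal $S$-stable set of the subtournament on $C$ is contained in $B$. The key step is to verify that $B$ is itself $S$-stable \emph{in} $C$: for $a \in C \setminus B$ we need $a \notin S(B \cup \{a\})$, but $B \cup \{a\} \subseteq A$ and $B$ is $S$-stable in $T$, so this is immediate from \defref{def:sstable} applied in $T$. Hence $B \in \mathcal{S}_S(C)$, and since $\mathcal{S}_S(C)$ is directed, its unique minimal element $\ms{S}(C)$ is contained in $B$. (Here one uses that directedness of $\mathcal{S}_S$ holds for \emph{all} tournaments, including the one on $C$.)

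For \iua, let $T = (A,\succ)$ and $T' = (A,\succ')$ with $D_\succ(a) = D_{\succ'}(a)$ for all $a \in S(T) = \ms{S}(T)$; write $B = \ms{S}(T)$. The strategy is to show $\mathcal{S}_S(T)$ and $\mathcal{S}_S(T')$ agree on the relevant sets, so that $B$ is the unique minimal $S$-stable set in $T'$ as well. First, $B$ is $S$-stable in $T'$: since the dominators of each element of $B$ are unchanged, the subtournament on $B \cup \{a\}$ is the same in $T$ and $T'$ for each $a \in A \setminus B$ (the only edges that could differ are those incident to elements outside $B$, but within $B \cup \{a\}$ all of $B$'s incoming edges—in particular from $a$—are fixed), so $a \notin S(B \cup \{a\})$ transfers from $T$ to $T'$ by the tournament-function property of $S$. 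Thus $B \in \mathcal{S}_S(T')$, so $\ms{S}(T') \subseteq B$ by directedness. For the reverse inclusion, I would argue symmetrically: any $S$-stable set $B'$ in $T'$ contained in $B$ must satisfy $B' = B$, because the relation restricted to $B$ is the same in $T$ and $T'$ (each $b \in B$ has the same dominators, hence the same dominion within $B$), so $B'$ is also $S$-stable in $T$ and minimality of $B$ in $T$ forces $B' = B$; applying this to $B' = \ms{S}(T')$ gives $\ms{S}(T') = B = \ms{S}(T)$.

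The main obstacle is making the ``restricted tournaments coincide'' bookkeeping airtight in the \iua\ argument—one must be careful that fixing dominators of elements of $B$ really does fix $\succ|_{B}$ and $\succ|_{B \cup \{a\}}$ for outside $a$, and then invoke clause (i) of \defref{def:tfunction} to conclude $S$ agrees. The \wsp\ half is essentially immediate once the right set is identified as a witness. It would also be worth remarking that the hypothesis ``$\mathcal{S}_S$ directed'' is used twice and genuinely: without it $\ms{S}(T)$ is a union of possibly several minimal stable sets and the witness argument breaks.
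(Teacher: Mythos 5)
Your argument is correct and is essentially the paper's own proof, which merely observes that the minimal $S$-stable set $B$ remains $S$-stable when unchosen alternatives are removed or when edges among unchosen alternatives are modified (and, in the latter case, remains minimal); you have simply filled in the bookkeeping. The preliminary appeal to internal stability is unnecessary---neither half of your argument actually uses it, which is just as well since the paper establishes internal stability of $\ms{S}$ only for solutions defined via qualified subsets (\thmref{thm:minstableproperties}), not for arbitrary $S$ with $\mathcal{S}_S$ directed.
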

\begin{proof}
Clearly, any minimal $S$-stable set $B$ remains $S$-stable when losing alternatives are removed or when edges between losing alternatives are modified. In the latter case, $B$ also remains minimal. In the former case, the minimal $S$-stable set is contained in~$B$.
\end{proof}

It can be shown that sets that are stable within a stable set are also stable in the original tournament when the underlying tournament solution is defined via a concept of qualified subsets $\mathcal{Q}$. This lemma will prove very useful when analyzing $\ms{S}_\mathcal{Q}$.

\begin{lemma}\label{lem:stablestable}
Let $T=(A,\succ)$ be a tournament and $\mathcal{Q}$ a concept of qualified subsets. Then, $\mathcal{S}_\mathcal{Q}(B)\subseteq \mathcal{S}_\mathcal{Q}(A)$ for all $B\in \mathcal{S}_\mathcal{Q}(A)$.
\end{lemma}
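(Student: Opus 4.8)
The plan is to verify that $C$ satisfies external $S_\mathcal{Q}$-stability in $T=(A,{\succ})$: for every $a\in A\setminus C$ we must have $a\notin S_\mathcal{Q}(C\cup\{a\})$. Since the choice set $S_\mathcal{Q}(C\cup\{a\})$ depends only on the subtournament induced on $C\cup\{a\}$, the case $a\in B$ is immediate — then $C\cup\{a\}\subseteq B$ and $a\in B\setminus C$, so external stability of $C$ in $B$ gives $a\notin S_\mathcal{Q}(C\cup\{a\})$. So assume $a\in A\setminus B$ and, for contradiction, that $a\in S_\mathcal{Q}(C\cup\{a\})$; fix $[Q,a]\in\max_\subseteq(\mathcal{Q}(C\cup\{a\}))$ with $Q\subseteq C$.

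First I would pass to $B$: by independence $[Q,a]\in\mathcal{Q}(B\cup\{a\})$, and since $a\notin B$ while $B$ is $S_\mathcal{Q}$-stable in $T$, we have $a\notin S_\mathcal{Q}(B\cup\{a\})$; hence there is $b\in B$ with $b\succ Q\cup\{a\}$ and $[Q\cup\{a\},b]\in\mathcal{Q}(B\cup\{a\})$. If $b\in C$, then independence gives $[Q\cup\{a\},b]\in\mathcal{Q}(C\cup\{a\})$, a qualified proper superset of $[Q,a]$ — contradiction. If instead $b\in B\setminus C$, note that $[Q,b]$ is isomorphic to $[Q,a]$ (identity on $Q$, $a\mapsto b$; this uses $b\succ Q$), hence qualified, so $[Q,b]\in\mathcal{Q}(C\cup\{b\})$; since $b\notin C$ and $C$ is $S_\mathcal{Q}$-stable in $B$, there is $c\in C$ with $c\succ Q\cup\{b\}$ and $[Q\cup\{b\},c]\in\mathcal{Q}(C\cup\{b\})$.

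It then remains to convert $[Q\cup\{b\},c]$, which lives in $C\cup\{b\}$, into a qualified proper superset of $[Q,a]$ inside $C\cup\{a\}$. I would split on the orientation of the edge between $a$ and $c$. If $c\succ a$, then the tournament on $Q\cup\{a,c\}$ with maximum $c$ is isomorphic to the one on $Q\cup\{b,c\}$ with maximum $c$ (match $b$ with $a$, fixing $Q\cup\{c\}$), so $[Q\cup\{a\},c]\in\mathcal{Q}(C\cup\{a\})$; if $a\succ c$, then the tournament on $Q\cup\{a,c\}$ with maximum $a$ is isomorphic to the one on $Q\cup\{b,c\}$ with maximum $c$ (send $c\mapsto a$ and $b\mapsto c$, fixing $Q$), so $[Q\cup\{c\},a]\in\mathcal{Q}(C\cup\{a\})$. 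In either case we have produced a qualified set strictly containing $[Q,a]$ within $C\cup\{a\}$, contradicting its maximality; this finishes the case $a\in A\setminus B$ and hence the lemma. I expect this last step to be the only delicate point: it superficially looks as though one must merge $[Q\cup\{a\},b]$ and $[Q\cup\{b\},c]$ through the fusion axiom, but exhibiting the correct isomorphism in each of the two orientations makes fusion unnecessary, so the whole argument reduces to the easy case plus repeated appeals to the independence and isomorphism clauses of the definition of a tournament function.
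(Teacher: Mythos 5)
Your proof is correct and follows essentially the same route as the paper's: reduce to the case $a\in A\setminus B$, obtain $b\in B$ with $[Q\cup\{a\},b]$ qualified from the stability of $B$, split on $b\in C$ versus $b\in B\setminus C$, and in the latter case use the stability of $C$ in $B$ to get $c\in C$ with $[Q\cup\{b\},c]$ qualified and then observe that $Q\cup\{a,c\}$ is isomorphic to $Q\cup\{b,c\}$ regardless of the orientation of the edge between $a$ and $c$. The only differences are cosmetic: you write out the two isomorphisms explicitly (which the paper leaves as ``no matter whether $c\succ a$ or $a\succ c$'') and handle $a\in B$ directly rather than via the paper's general implication.
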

\begin{proof}
We prove the statement by showing that the following implication holds for all $B\subseteq A$, $C\in \mathcal{S}_\mathcal{Q}(B)$, and $a\in A$:
\[
\text{if }a\not\in S_\mathcal{Q}(B\cup\{a\}) \text{ then } a\not\in S_\mathcal{Q}(C\cup\{a\})\text{.}
\]
To see this, let $a\not\in S_\mathcal{Q}(B\cup\{a\})$ and assume for contradiction that there exist $[Q,a]\in \max_\subseteq\mathcal{Q}(C\cup\{a\})$. Then there has to be $b\in B$ such that $[Q\cup\{a\},b]\in\mathcal{Q}(B\cup\{a\})$ because $[Q,a]\not\in \max_\subseteq \mathcal{Q}(B\cup\{a\})$. Now, if $b\in C$, closure and independence imply
that $[Q\cup\{a\},b]\in \mathcal{Q}(C\cup\{a\})$, contradicting the maximality of $[Q,a]$. If, on the other hand, $b\in B\setminus C$, then there has to be $c\in C$ such that $[Q\cup\{b\},c]\in \mathcal{Q}(C\cup\{b\})$. No matter whether $c\succ a$ or $a\succ c$, $Q\cup\{a,c\}$ is isomorphic to $[Q\cup\{b\},c]$ and thus also a qualified subset, which again contradicts the assumption that $[Q,a]$ was maximal.
\end{proof}

We are now ready to show a number of appealing properties of \emph{unique} minimal stable sets when the underlying solution concept is defined via qualified subsets.
\todo{rewrite using $\widehat{\gamma}$, self-stability, \etc}
\begin{theorem}\label{thm:minstableproperties}
Let $\mathcal{Q}$ be a concept of qualified subsets such that $\mathcal{S_Q}$ is directed.  Then,
\begin{enumerate}[label=\textit{(\roman*)}]
 \item $\ms{S}_\mathcal{Q}\subseteq S_\mathcal{Q}^\infty$,
 \item\label{itm:internalstability} $S_\mathcal{Q}(\ms{S}_\mathcal{Q}(T)\cup\{a\})=\ms{S}_\mathcal{Q}(T)$ for all tournaments $T=(A,{\succ})$ and $a\in A$ (in particular, $\ms{S}_\mathcal{Q}(T)$ is internally stable),
 \item $\ms{S}_\mathcal{Q}$ satisfies \ssp, and
 \item $\ms{\ms{S}}_\mathcal{Q}=\ms{S}_\mathcal{Q}$.
\end{enumerate}
\end{theorem}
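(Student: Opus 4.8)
The plan is to prove the four items in the order (ii), (iii), (i), (iv), since the arguments for the later items feed the earlier-numbered ones; throughout, write $B=\ms{S}_\mathcal{Q}(T)$ for the unique minimal $S_\mathcal{Q}$-stable set of $T=(A,{\succ})$, which exists because $\mathcal{S}_\mathcal{Q}$ is directed, and recall that $B$ is then contained in every $S_\mathcal{Q}$-stable set of $T$. For the internal-stability half of (ii) I would put $C=S_\mathcal{Q}(B)$ and invoke \wsp of $S_\mathcal{Q}$ (\propref{pro:qwspmon}): since $S_\mathcal{Q}(B)=C\subseteq C\cup\{a\}$, we get $S_\mathcal{Q}(C\cup\{a\})\subseteq S_\mathcal{Q}(B)=C$ for every $a\in B\setminus C$, so $C$ is $S_\mathcal{Q}$-stable in the subtournament on $B$; as $B$ is $S_\mathcal{Q}$-stable in $T$, \lemref{lem:stablestable} lifts $C$ to an $S_\mathcal{Q}$-stable set of $T$, whence $C=B$ by minimality. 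The full identity $S_\mathcal{Q}(B\cup\{a\})=B$ then follows immediately: for $a\in B$ it is internal stability, and for $a\notin B$ external stability gives $S_\mathcal{Q}(B\cup\{a\})\subseteq B$ while \wsp of $S_\mathcal{Q}$ gives $B=S_\mathcal{Q}(B)\subseteq S_\mathcal{Q}(B\cup\{a\})$. For (iii), given $B\subseteq B'\subseteq A$, I would set $B''=\ms{S}_\mathcal{Q}(B')$, observe $B''\subseteq\ms{S}_\mathcal{Q}(A)=B$ by \wsp of $\ms{S}_\mathcal{Q}$ (\propref{pro:stableaiz}), note that external stability of $B''$ in $B'$ restricts to external stability of $B''$ in $B$ (because $B\setminus B''\subseteq B'\setminus B''$), apply \lemref{lem:stablestable} again to get that $B''$ is $S_\mathcal{Q}$-stable in $T$, and conclude $B=B''$ by minimality.

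For (i) I would reduce to the base case $\ms{S}_\mathcal{Q}(T)\subseteq S_\mathcal{Q}(T)$ by induction: if $\ms{S}_\mathcal{Q}(T)\subseteq S_\mathcal{Q}^{k}(T)=:C$, then \ssp (item (iii)) gives $\ms{S}_\mathcal{Q}(C)=\ms{S}_\mathcal{Q}(T)$, and the base case inside $C$ gives $\ms{S}_\mathcal{Q}(T)=\ms{S}_\mathcal{Q}(C)\subseteq S_\mathcal{Q}(C)=S_\mathcal{Q}^{k+1}(T)$, hence $\ms{S}_\mathcal{Q}(T)\subseteq S_\mathcal{Q}^\infty(T)$. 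The base case I would prove by contradiction: assume $b\in B\setminus S_\mathcal{Q}(A)$; by internal stability there is $[Q_0,b]\in\max_\subseteq(\mathcal{Q}(B))$, and since $b\notin S_\mathcal{Q}(A)$ the fact recalled before \propref{pro:qwspmon} produces $c\in S_\mathcal{Q}(A)$ with $[Q_0\cup\{b\},c]\in\mathcal{Q}(A)$. If $c\in B$, independence puts $[Q_0\cup\{b\},c]$ in $\mathcal{Q}(B)$, contradicting maximality of $[Q_0,b]$. If $c\notin B$, then external stability of $B$ together with (ii) gives $S_\mathcal{Q}(B\cup\{c\})=B$; closure and independence place $[Q_0,c]$ in $\mathcal{Q}(B\cup\{c\})$, so the same fact yields $d\in B$ with $[Q_0\cup\{c\},d]\in\mathcal{Q}(B\cup\{c\})$. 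Now $d\succ Q_0$ and $d\neq b$ (as $d\succ c\succ b$), and the subtournament on $Q_0\cup\{c,d\}$ is isomorphic to the one on $Q_0\cup\{b,d\}$ --- $b$ and $c$ play interchangeable roles relative to $Q_0$ and $d$ --- so $Q_0\cup\{b,d\}\in\mathcal{Q}(B)$ by independence and the isomorphism condition of \defref{def:tfunction}, contradicting maximality of $[Q_0,b]$ since $Q_0\cup\{b,d\}\supsetneq Q_0\cup\{b\}$.

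For (iv), \ssp of $\ms{S}_\mathcal{Q}$ (item (iii)) shows at once that $B$ is $\ms{S}_\mathcal{Q}$-stable in $T$, because $\ms{S}_\mathcal{Q}(B\cup\{a\})=\ms{S}_\mathcal{Q}(A)=B$ omits $a$ for every $a\notin B$. It then suffices to show that every $\ms{S}_\mathcal{Q}$-stable set $C$ of $T$ contains $B$, and for this I would check that $D:=\ms{S}_\mathcal{Q}(C)$ is already $S_\mathcal{Q}$-stable in $T$: for $a\in C\setminus D$ this is external stability of $D$ in $C$; for $a\in A\setminus C$, stability of $C$ gives $\ms{S}_\mathcal{Q}(C\cup\{a\})\subseteq C$, \ssp then gives $D=\ms{S}_\mathcal{Q}(C)=\ms{S}_\mathcal{Q}(C\cup\{a\})$, and applying (ii) inside the tournament $C\cup\{a\}$ gives $S_\mathcal{Q}(D\cup\{a\})=D\not\ni a$. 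Minimality of $B$ forces $B\subseteq D\subseteq C$, so $B$ is the unique minimal $\ms{S}_\mathcal{Q}$-stable set of $T$ and $\ms{\ms{S}}_\mathcal{Q}(T)=B=\ms{S}_\mathcal{Q}(T)$.

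I expect the base case of (i) to be the main obstacle. Items (ii), (iii), and (iv) are essentially formal, combining \lemref{lem:stablestable}, the weak superset property, and the strong superset property; only the base case of (i) genuinely uses the closure axiom for qualified subsets together with the internal and external stability of $B$, and it is the one place where one has to argue with concrete qualified subsets and an isomorphism between them rather than with abstract stability properties.
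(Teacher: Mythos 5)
Your treatment of items \emph{(ii)}, \emph{(iii)}, and \emph{(iv)} is correct and essentially coincides with the paper's proof: all three rest on \lemref{lem:stablestable}, the minimality of $\ms{S}_\mathcal{Q}(T)$, and \wsp/\ssp, exactly as in the original argument. The genuine divergence is in item \emph{(i)}, and there you have misjudged where the difficulty lies. The inclusion $\ms{S}_\mathcal{Q}(T)\subseteq S_\mathcal{Q}(T)$, which you single out as ``the main obstacle,'' is immediate: by \wsp of $S_\mathcal{Q}$ (\propref{pro:qwspmon}), $a\notin S_\mathcal{Q}(S_\mathcal{Q}(T)\cup\{a\})$ for every $a\in A\setminus S_\mathcal{Q}(T)$, so $S_\mathcal{Q}(T)$ is itself an $S_\mathcal{Q}$-stable set, and directedness puts the minimal stable set inside it---a fact you yourself invoke at the outset (``$B$ is contained in every $S_\mathcal{Q}$-stable set''). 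The paper iterates exactly this observation, showing by induction that every $S_\mathcal{Q}^k(T)$ is an $S_\mathcal{Q}$-stable set of $T$ via \lemref{lem:stablestable}; no combinatorial argument with concrete qualified subsets is needed anywhere in the theorem. Your bootstrap via \ssp is a valid alternative to that induction, but the hand-built base case is a long detour.

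Moreover, that detour contains a real gap. You assert that the subtournament on $Q_0\cup\{c,d\}$ is isomorphic to the one on $Q_0\cup\{b,d\}$ because ``$b$ and $c$ play interchangeable roles relative to $Q_0$ and $d$.'' That map (identity on $Q_0\cup\{d\}$, $c\mapsto b$) is an isomorphism only if $d\succ b$, and nothing you have established rules out $b\succ d$: you know $d\succ c$ and $c\succ b$, but dominance is not transitive. The claim can be repaired---if $b\succ d$ one instead uses the isomorphism fixing $Q_0$ with $c\mapsto d$ and $d\mapsto b$, so that $Q_0\cup\{b,d\}$ is qualified in either case and still contradicts the maximality of $[Q_0,b]$ in $\mathcal{Q}(B)$---but as written the step does not go through. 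Since the whole passage can be replaced by the two-line \wsp argument above, the cleanest fix is to delete it.
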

\begin{proof}
Let $T=(A,\succ)$ be a tournament. 
The first statement of the theorem is shown by proving by induction on $k$ that $S_\mathcal{Q}^k(T)$ is an $S_\mathcal{Q}$-stable set.
For the basis, let $B=S_\mathcal{Q}(T)$.  Then, $S_\mathcal{Q}(B\cup \{a\})\subseteq B$ for every $a\in A\setminus B$ due to \wsp of $S_\mathcal{Q}$ (\propref{pro:qwspmon}) and thus $B$ is $S_\mathcal{Q}$-stable. 
Now, assume that $B=S_\mathcal{Q}^{k}(T)$ is $S_\mathcal{Q}$-stable and let $C=S_\mathcal{Q}(B)$. Again, \wsp implies that $a\not\in S_\mathcal{Q}(C\cup \{a\})$ for every $a\in B\setminus C$, \ie $C\in\mathcal{S}_\mathcal{Q}(B)$. We can thus directly apply \lemref{lem:stablestable} to obtain that $C=S_\mathcal{Q}^{k+1}(T)\in\mathcal{S}_\mathcal{Q}(T)$. As the minimal $S_\mathcal{Q}$-stable set is contained in every $S_\mathcal{Q}$-stable set, the statement follows.

Regarding internal stability, assume for contradiction that $S_\mathcal{Q}(\ms{S}_\mathcal{Q}(T))\subset \ms{S}_\mathcal{Q}(T)$. However, \lemref{lem:stablestable} implies that $S_\mathcal{Q}(\ms{S}_\mathcal{Q}(T))$ is $S_\mathcal{Q}$-stable, contradicting the minimality of $\ms{S}_\mathcal{Q}(T)$.  The remainder of the second statement follows straightforwardly from internal stability. If $S_\mathcal{Q}(\ms{S}_\mathcal{Q}(T)\cup \{a\})=C\subset \ms{S}_\mathcal{Q}(T)$ for some $a\in A\setminus \ms{S}_\mathcal{Q}(T)$, \wsp implies that $S_\mathcal{Q}(\ms{S}_\mathcal{Q}(T))\subseteq C$, contradicting internal stability.

Regarding \ssp, let $B=\ms{S}_\mathcal{Q}(T)$ and assume for contradiction that $C=\ms{S}_\mathcal{Q}(A')\subset B$ for some $A'$ with $B\subseteq A'\subset A$. Clearly, $C$ is $S_\mathcal{Q}$-stable not only in $A'$ but also in $B$, which implies that $C\in \mathcal{S}_{\mathcal{Q}}(B)$. According to \lemref{lem:stablestable}, $C$ is also contained in $\mathcal{S}_\mathcal{Q}(A)$, contradicting the minimality of $\ms{S}_\mathcal{Q}(T)$.

Finally, for $\ms{\widehat{S}}_\mathcal{Q}(T)=\ms{S}_\mathcal{Q}(T)$, we show that every $S_\mathcal{Q}$-stable set is $\ms{S}_\mathcal{Q}$-stable and that every \emph{minimal} $\ms{S}_\mathcal{Q}$-stable set is $S_\mathcal{Q}$-stable set.  The former follows from $\ms{S}_\mathcal{Q}(T)\subseteq S_\mathcal{Q}(T)$, which is a consequence of the first statement of this theorem.  For the latter statement, let $B\in \min_\subseteq(\mathcal{S}_{\ms{S}_\mathcal{Q}}(T))$.  
We first show that $\ms{S}_\mathcal{Q}(B\cup \{a\})=B$ for all $a\in A\setminus B$.
Assume for contradiction that $\ms{S}_\mathcal{Q}(B\cup \{a\})=C\subset B$ for some $a\in A\setminus B$. Since $\ms{S}_\mathcal{Q}$ satisfies \ssp, $\ms{S}_\mathcal{Q}(B\cup \{a\})=C$ for all $a\in A\setminus B$. As a consequence, $C$ is $\ms{S}_\mathcal{Q}$-stable in $B\cup\{a\}$ for all $a\in A\setminus B$ and, due to the definition of stability, also in $A$. This contradicts the assumption that $B$ was the \emph{minimal} $\ms{S}$-stable set. Hence, $\ms{S}_\mathcal{Q}(B\cup \{a\})=B$ for all $a\in A\setminus B$. By definition of $\ms{S}_\mathcal{Q}$, this implies that $a\not\in S_\mathcal{Q}(B\cup\{a\})$ and thus that $B$ is $S_\mathcal{Q}$-stable.
\end{proof}
The second statement of Theorem~\ref{thm:minstableproperties} allows us to characterize stable sets using the fixed-point formulation mentioned at the beginning of this section, which unifies internal and external stability.

\begin{corollary}\label{cor:fpdef}
Let $\mathcal{Q}$ be a concept of qualified subsets such that $\mathcal{S}_\mathcal{Q}$ is directed and $T=(A,{\succ})$ a tournament. Then, 
\[\ms{S}_\mathcal{Q}(T)=\min_\subseteq \{B\subseteq A \mid B=\bigcup_{a\in A} S_\mathcal{Q}(B\cup\{a\})\}\text{.}\] 
\end{corollary}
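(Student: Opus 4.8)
The plan is to read the right-hand side as a family of sets $\mathcal{R}=\{B\subseteq A\mid B=\bigcup_{a\in A}S_\mathcal{Q}(B\cup\{a\})\}$ and to establish two things: (a) $\ms{S}_\mathcal{Q}(T)\in\mathcal{R}$, and (b) $\ms{S}_\mathcal{Q}(T)\subseteq B$ for every $B\in\mathcal{R}$. Together these say that $\ms{S}_\mathcal{Q}(T)$ is the (hence unique) $\subseteq$-minimal element of $\mathcal{R}$, so that $\min_\subseteq\mathcal{R}=\ms{S}_\mathcal{Q}(T)$, which is exactly the claimed identity. So the one interpretive point to flag is that $\min_\subseteq$ applied to a family of sets returns its minimal members, and parts (a) and (b) together force there to be just one.

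For (a) I would simply invoke the second statement of Theorem~\ref{thm:minstableproperties}: it gives $S_\mathcal{Q}(\ms{S}_\mathcal{Q}(T)\cup\{a\})=\ms{S}_\mathcal{Q}(T)$ for every $a\in A$, so the union over all $a\in A$ of these (identical) sets is again $\ms{S}_\mathcal{Q}(T)$; hence $\ms{S}_\mathcal{Q}(T)\in\mathcal{R}$. For (b) the key observation is that any $B\in\mathcal{R}$ is automatically externally $S_\mathcal{Q}$-stable: for $a\in A\setminus B$ we have $S_\mathcal{Q}(B\cup\{a\})\subseteq\bigcup_{c\in A}S_\mathcal{Q}(B\cup\{c\})=B$, and since $a\notin B$ this forces $a\notin S_\mathcal{Q}(B\cup\{a\})$, i.e.\ $B\in\mathcal{S}_\mathcal{Q}(T)$. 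Because $\mathcal{S}_\mathcal{Q}$ is directed, $\mathcal{S}_\mathcal{Q}(T)$ has a unique $\subseteq$-minimal element, namely $\ms{S}_\mathcal{Q}(T)$, which is contained in every $S_\mathcal{Q}$-stable set; in particular $\ms{S}_\mathcal{Q}(T)\subseteq B$. Combining (a) and (b) finishes the proof.

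I do not expect any genuine obstacle here: the corollary is essentially a repackaging of Theorem~\ref{thm:minstableproperties}(ii) together with the directedness hypothesis, once one notices that the fixed-point condition defining $\mathcal{R}$ already implies external stability --- only the inclusion $S_\mathcal{Q}(B\cup\{a\})\subseteq B$ is needed for that, not the reverse inclusion that would also give internal stability. Everything else is bookkeeping.
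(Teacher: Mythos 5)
Your proposal is correct and matches the paper's intent: the paper gives no written proof but presents the corollary as an immediate consequence of Theorem~\ref{thm:minstableproperties}\textit{(ii)}, and your two steps (that $\ms{S}_\mathcal{Q}(T)$ satisfies the fixed-point condition by that theorem, and that any fixed point is externally $S_\mathcal{Q}$-stable and hence contains the unique minimal stable set by directedness) are exactly the intended derivation.
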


There may very well be more than one internally and externally $S_\mathcal{Q}$-stable set in a tournament.
For example, the proof of Theorem~\ref{thm:minstableproperties} implies that $S_\mathcal{Q}^\infty(T)$ is internally and externally $S_\mathcal{Q}$-stable.

We have already seen that $\ms{S}_\mathcal{Q}$ satisfies some of the basic properties defined in Section~\ref{sec:properties}. It further turns out that $\ms{S}$ inherits monotonicity and composition-consistency from $S$.

\begin{proposition}\label{pro:stablemon}
Let $S$ be a tournament solution such that $\mathcal{S}_S$ is directed and $S$ satisfies \mon. Then, $\ms{S}$ satisfies \mon as well.
\end{proposition}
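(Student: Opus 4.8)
The plan is to reduce monotonicity of $\ms{S}$ to that of $S$ by a short stability-transfer argument. Fix tournaments $T=(A,{\succ})$ and $T'=(A,{\succ'})$ and an alternative $a\in A$ with ${\succ}|_{A\setminus\{a\}}={\succ'}|_{A\setminus\{a\}}$ and $D_\succ(a)\subseteq D_{\succ'}(a)$, and assume $a\in\ms{S}(T)$; the goal is $a\in\ms{S}(T')$. Since $\mathcal{S}_S$ is directed, $\ms{S}(T')$ is the unique, and hence minimal, $S$-stable set $B'$ of $T'$, and likewise $\ms{S}(T)$ is the minimal $S$-stable set of $T$. I would argue by contradiction: suppose $a\notin B'$, and show that $B'$ is then also $S$-stable in $T$.

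The key observation is that external $S$-stability of $B'$ only constrains the subtournaments on the sets $B'\cup\{x\}$ for $x\in A\setminus B'$, and — since $\succ$ and $\succ'$ agree on $A\setminus\{a\}$ and $a\notin B'$ — the only such subtournament affected by passing from $T$ to $T'$ is $B'\cup\{a\}$. Concretely: for $x\in A\setminus B'$ with $x\neq a$ the tournaments $(B'\cup\{x\},{\succ})$ and $(B'\cup\{x\},{\succ'})$ coincide, so $x\notin S(B'\cup\{x\})$ in $T$ because the same holds in $T'$ by $S$-stability of $B'$. For $x=a$: from $S$-stability of $B'$ in $T'$ we have $a\notin S((B'\cup\{a\},{\succ'}))$; the tournaments $(B'\cup\{a\},{\succ})$ and $(B'\cup\{a\},{\succ'})$ differ only in the edges out of $a$, with $D_\succ(a)\cap B'\subseteq D_{\succ'}(a)\cap B'$, so applying the contrapositive of \mon of $S$ on the alternative set $B'\cup\{a\}$ gives $a\notin S((B'\cup\{a\},{\succ}))$. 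Hence $B'$ is $S$-stable in $T$.

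To finish, directedness of $\mathcal{S}_S(T)$ forces $\ms{S}(T)$, the minimal $S$-stable set of $T$, to be contained in every $S$-stable set of $T$, in particular in $B'$; since $a\in\ms{S}(T)$ this yields $a\in B'$, contradicting $a\notin B'$. Therefore $a\in B'=\ms{S}(T')$. I do not anticipate a real obstacle: the only idea is recognising that the external-stability test for $B'$ localizes to the single subtournament $B'\cup\{a\}$, where \mon of $S$ applies essentially verbatim, and that directedness is needed exactly once — to pass from "$B'$ is an $S$-stable set of $T$" to "$B'\supseteq\ms{S}(T)$". The one routine check is that the two hypotheses of \defref{def:mon} (agreement of the relations on $B'$, and inclusion of the dominions of $a$) transfer from $T,T'$ to the restricted pair $(B'\cup\{a\},{\succ})$, $(B'\cup\{a\},{\succ'})$, which is immediate because $a\notin B'$.
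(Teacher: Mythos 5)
Your proof is correct and follows essentially the same route as the paper's: assume $a\notin\ms{S}(T')$, show that the minimal $S$-stable set of $T'$ remains $S$-stable in $T$ (the only affected stability check being on $B'\cup\{a\}$, where \mon of $S$ applies), and derive a contradiction with $a\in\ms{S}(T)$ via directedness. The only difference is that the paper reverses one edge at a time while you handle an arbitrary extension of $D_\succ(a)$ in one step, which is a cosmetic variation.
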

\begin{proof}
Let $T=(A,{\succ})$ be a tournament with $a,b\in A$, $a\in \ms{S}(T)$, and $b\succ a$, and let the relation $\succ'$ be identical to $\succ$ except that $a\succ' b$. Denote $T'=(A,\succ')$ and assume for contradiction that $a\not\in \ms{S}(T')$. Then, there has to be a minimal $S$-stable set $B\subseteq A\setminus\{a\}$ in $T'$. We show that $B$ is also $S$-stable in $T$, a contradiction. If $b\not\in B$, this would clearly be the case because $\ms{S}$ satisfies $\iua$. If, on the other hand, $b\in B$, the only reason for $B$ not to be $S$-stable in $T$ is that $a\in S((B\cup\{a\}),\succ)$. However, monotonicity of $S$ then implies that $a\in S((B\cup\{a\}),\succ')$, which is a contradiction because $B$ is $S$-stable in $T'$.
\end{proof}

\begin{proposition}\label{pro:stablecomp}
Let $S$ be a tournament solution that satisfies \com. Then, $\ms{S}$ satisfies \com as well.
\end{proposition}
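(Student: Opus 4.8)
The plan is to show that the collection of $S$-stable sets of a product tournament $T = \Pi(\tilde T, T_1, \dots, T_k)$ is governed in a transparent way by the $S$-stable sets of the summary $\tilde T$ and of the individual factors $T_i$. Write $A_i$ for the vertex set of $T_i$. The key structural claim I would aim for is: a set $B \subseteq A$ is a \emph{minimal} $S$-stable set of $T$ if and only if there is a minimal $S$-stable set $\tilde B$ of $\tilde T$ and, for each $i \in \tilde B$, a minimal $S$-stable set $B_i$ of $T_i$, such that $B = \bigcup_{i \in \tilde B} B_i$. Granting this, $\widehat S(T) = \bigcup\bigl(\min_\subseteq \mathcal S_S(T)\bigr) = \bigcup_{i \in \widehat S(\tilde T)} \widehat S(T_i)$ follows immediately, which is exactly \com for $\widehat S$ (here I use that $\mathcal S_S$ is directed so that these minimal sets are unique, matching the standing hypothesis under which $\widehat S$ is studied; if one wants the statement without directedness, the same component-wise description of \emph{all} minimal stable sets still yields the union identity).

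First I would record the basic interaction between components and the $S$-stability test. Fix $B = \bigcup_{i \in I} B_i$ where $I \subseteq \{1,\dots,k\}$ and $\emptyset \ne B_i \subseteq A_i$. For an outside alternative $a$, there are two cases. If $a \in A_j$ with $j \notin I$, then in $B \cup \{a\}$ the singleton-ish factor $\{a\}$ sits as a whole component, so $B \cup \{a\}$ is itself a product with summary obtained from $\tilde T|_{I \cup \{j\}}$; by \com of $S$, $a \in S(B \cup \{a\})$ iff $j \in S$ of that summary iff $B_j' = \{a\}$ would be selected, i.e. iff $j$ is ``not dominated away'' in $I \cup \{j\}$ — so external stability of $B$ against such $a$ is exactly external stability of $I$ (as a vertex set) in $\tilde T$. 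If instead $a \in A_j$ with $j \in I$, then $B \cup \{a\}$ is the product over summary $\tilde T|_I$ of the factors $T_i$ ($i\in I$, $i \ne j$) together with $T_j|_{B_j \cup \{a\}}$; again by \com, $a \in S(B\cup\{a\})$ iff $j \in S(\tilde T|_I)$ \emph{and} $a \in S(B_j \cup \{a\})$. These two computations, both pure applications of \com of $S$, are the technical heart.

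From this dictionary the equivalence falls out. For the ``if'' direction: if $\tilde B$ is $S$-stable in $\tilde T$ and each $B_i$ ($i \in \tilde B$) is $S$-stable in $T_i$, the first case above kills outside alternatives from factors not in $\tilde B$, and the second case kills outside alternatives inside a factor $i \in \tilde B$ (either $i \notin S(\tilde T|_{\tilde B})$, or $a \notin S(B_i \cup \{a\})$ by stability of $B_i$); and minimality transfers because shrinking $\tilde B$ or some $B_i$ breaks the corresponding stability test. For the ``only if'' direction: given a minimal $S$-stable $B$ of $T$, let $I = \{ i \mid B \cap A_i \ne \emptyset\}$ and $B_i = B \cap A_i$. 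The first case shows $I$ is externally stable in $\tilde T$; one then uses \com of $S$ applied to $B$ itself (which is a product with summary $\tilde T|_I$) together with internal considerations to see each $B_i$ must be externally stable in $T_i$, and minimality of $B$ forces $I$ and each $B_i$ to be minimal.

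The main obstacle I anticipate is \emph{minimality bookkeeping}, not the stability computations: one must be careful that a minimal $S$-stable set of $T$ cannot ``leave out'' a whole factor $i$ that belongs to $S(\tilde T|_I)$ yet somehow still be stable, and cannot include a factor with a non-minimal internal piece. Pinning this down cleanly likely needs the observation (case-one computation) that the trace of $B$ on the summary, namely $I$, is forced to contain a minimal $S$-stable set of $\tilde T$ and that any strictly smaller admissible choice of $\tilde B \subseteq I$ together with the existing $B_i$'s would already be $S$-stable in $T$, contradicting minimality of $B$ — so $I$ is itself minimal and equals that minimal stable set of $\tilde T$. A secondary nuisance is purely notational: the definition of $\Pi$ in the paper uses $\{1,\dots,k\}$ as the summary's vertex set, so when restricting to a subset $I$ of components one should invoke the isomorphism-invariance clause of \defref{def:tfunction} to treat $\tilde T|_I$ as a bona fide tournament to which \com of $S$ applies. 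None of this is deep, but it is where the proof has to be written with care.
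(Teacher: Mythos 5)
Your proposal takes essentially the same route as the paper: the paper's proof shows that $C\subseteq A$ is $S$-stable in the product if and only if its trace $\tilde{C}$ on the summary is $S$-stable in $\tilde{T}$ and each $C_i=C\cap B_i$ is $S$-stable in $T_i$, using exactly your two-case analysis of where the outside alternative lives (unchosen component versus chosen component) together with \com of $S$, and then reads off the statement for minimal stable sets. The only difference is cosmetic---the paper characterizes all $S$-stable sets rather than only the minimal ones and leaves the minimality bookkeeping implicit---so your plan matches the paper's argument.
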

\begin{proof}
Let $S$ be a composition-consistent tournament solution and $T=(A,{\succ})=\Pi(\tilde{T},T_1,\dots,T_k)$ a product tournament with $T=(\{1,\dots,k\},\tilde{\succ})$, $T_1=(B_1,\succ_1)$, \dots, $T_k=(B_k,\succ_k)$. For a subset $C$ of $A$, let $C_i=C\cap B_i$ for all $i\in\{1,\dots,k\}$ and $\tilde{C}=\bigcup_{i\colon C_i\ne \emptyset} \{i\}$. We will prove that $C\subseteq A$ is $S$-stable if and only if 
\begin{enumerate}[label=\textit{(\roman*)}]
\item $\tilde{C}$ is $S$-stable in $\tilde{T}$, and 
\item $C_i$ is $S$-stable in $T_i$ for all $i\in\{1,\dots,k\}$. 
\end{enumerate}
Consider an arbitrary alternative $a\in A\setminus C$. For $C$ to be $S$-stable, $a$ should not be contained in $S(C\cup\{a\})$. Since $S$ is composition-consistent, $a$ may be excluded for two reasons. First, $a$ may be contained in an unchosen component, \ie $a\in B_i$ such that $i\not\in S(\tilde{C}\cup \{i\})$. Secondly, $a$ may not be selected despite being in a chosen component, \ie $a\in B_i$ such that $i\in S(\tilde{C}\cup \{i\})$ and $a\not\in S(C_i\cup\{a\})$. This directly establishes the claim above and consequently that $\ms{S}$ is composition-consistent.
\end{proof}

The previous propositions and theorems allow us to deduce several known statements about $\tc$ and $\mc$, in particular that both concepts satisfy all basic properties and that $\mc$ is a refinement of $\uc^\infty$ and satisfies \com.

We conclude this section by generalizing the axiomatization of the minimal covering set \citep{Dutt88a} to abstract minimal stable sets. One of the cornerstones of the axiomatization is $S$-exclusivity, which prescribes under which circumstances a single element may be dismissed from the choice set.\footnote{$\uc$-exclusivity is the property $\gamma^{**}$ used in the axiomatization of $\mc$ \citep{Lasl97a}.}

\begin{definition}
 A tournament solution $S'$ satisfies \emph{$S$-exclusivity} if, for every tournament $T=(A,{\succ})$,  $S'(T)=A\setminus \{a\}$ implies that $a\not\in S(A)$.
\end{definition}

If $S$ always admits a \emph{unique} minimal $S$-stable set and $\ms{S}$ satisfies \ssp, which is always the case if $S$ is defined via qualified subsets, then $\ms{S}$ can be characterized by \ssp, $S$-exclusivity, and inclusion-minimality.

\begin{proposition}\label{pro:stablecharac}
Let $S$ be a tournament solution such that $\mathcal{S}_S$ is directed and $\ms{S}$ satisfies \ssp. Then, $\ms{S}$ is the finest tournament solution satisfying \ssp and $S$-exclusivity.
\end{proposition}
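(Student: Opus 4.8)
The plan is to prove the two halves separately: that $\ms S$ does satisfy \ssp and $S$-exclusivity, and that it is contained in every tournament solution that does. The first half is quick. \ssp is given by hypothesis. For $S$-exclusivity, suppose $\ms S(T)=A\setminus\{a\}$ for a tournament $T=(A,{\succ})$; since $\mathcal S_S$ is directed, $\ms S(T)$ is the unique minimal $S$-stable set and in particular externally $S$-stable, so \defref{def:sstable} applied to the lone outside alternative $a$ gives $a\notin S(\ms S(T)\cup\{a\})=S(A)$, which is exactly what $S$-exclusivity demands.

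The substance is the second half. Let $S'$ be an arbitrary tournament solution satisfying \ssp and $S$-exclusivity and fix $T=(A,{\succ})$; the goal is $\ms S(T)\subseteq S'(T)$. I would establish the key claim that $S'(T)$ is itself an $S$-stable set in $T$. Take any $a\in A\setminus S'(T)$ and pass to the subtournament $T'=(S'(T)\cup\{a\},{\succ})$: by \ssp of $S'$ we have $S'(T')=S'(T)$, and since $a\notin S'(T)$ this is $(S'(T)\cup\{a\})\setminus\{a\}$, so $S$-exclusivity applied to $T'$ yields $a\notin S(S'(T)\cup\{a\})$. As $a$ ranges over $A\setminus S'(T)$ this says precisely that $S'(T)$ is externally $S$-stable. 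Directedness of $\mathcal S_S$ then forces the minimal $S$-stable set $\ms S(T)$ to lie inside $S'(T)$, completing the argument.

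I do not expect a serious obstacle here: the one thing to notice is that \ssp is exactly the tool that lets us restrict attention to the subtournament on $S'(T)\cup\{a\}$ without disturbing $S'$, after which $S$-exclusivity applies verbatim. The only detail to handle carefully is that a minimal element of $\mathcal S_S(T)$ really is externally $S$-stable---immediate from \defref{def:sstable}, but needed both for the $S$-exclusivity check in the first paragraph and for the final inclusion---and, relatedly, that directedness of $\mathcal S_S$ genuinely delivers a least (not merely minimal) $S$-stable set, which is recorded in the discussion preceding \lemref{lem:pairwise}.
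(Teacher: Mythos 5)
Your proof is correct and follows essentially the same route as the paper: show that $S'(T)$ is externally $S$-stable by combining \ssp of $S'$ on the subtournament $S'(T)\cup\{a\}$ with $S$-exclusivity, then invoke directedness to place the unique minimal $S$-stable set inside $S'(T)$. Your explicit verification that $\ms{S}$ itself satisfies $S$-exclusivity (via external stability of the minimal stable set) is a detail the paper merely asserts, but it is the intended argument.
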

\begin{proof}
Let $S$ be a tournament solution as desired and $S'$ a tournament solution that satisfies \ssp and $S$-exclusivity.  We first prove that $\ms{S}\subseteq S'$ by showing that $S'(T)$ is $S$-stable for every tournament $T=(A,{\succ})$. Let $B=S'(T)$ and $a\in A\setminus B$. It follows from \ssp that $S'(B\cup \{a\})=B$ and from $S$-exclusivity that $a\not\in S(B\cup\{a\})$, which implies that $B$ is $S$-stable. Since $\ms{S}(T)$ is the unique inclusion-minimal $S$-stable set, it has to be contained in all $S$-stable sets.
The statement now follows from the fact that $\ms{S}$ satisfies \ssp and $S$-exclusivity.
\end{proof}

Hence, $\tc$ is the finest tournament solution satisfying \ssp and $\cnl$-exclusivity, $\mc$ is the finest tournament solution satisfying \ssp and $\uc$-exclusivity, and $\me$ is the finest tournament solution satisfying \ssp and $\ba$-exclusivity if \conref{con:me} holds.

\subsection{The Minimal Extending Set}
\label{sec:me} 

As mentioned in Section~\ref{sec:directedness}, the minimal extending set is a new tournament solution that has not been considered before. In analogy to $\uc$-stable sets, which are known as covering sets, we will call $\ba$-stable sets \emph{extending sets}. $B$ is an extending set of tournament $T=(A,{\succ})$ if, for all $a\not\in B$, every transitive path (or so-called Banks trajectory) in $B\cup\{a\}$ with maximal element $a$ can be extended, \ie there is $b\in B$ such that $b$ dominates every element on the path. In other words, $B\subseteq A$ is an \emph{extending set} if for all~$a\in A\setminus B$, $a\notin \ba[B\cup\{a\}]$. 

If Conjecture~\ref{con:me} is correct, $\me$ satisfies all properties defined in Section~\ref{sec:properties} and is a refinement of $\ba$ due to Propositions~\ref{pro:stableaiz} and~\ref{pro:stablemon} and Theorem~\ref{thm:minstableproperties}.
Assuming that Conjecture~\ref{con:me} holds, Proposition~\ref{pro:stableinclusion} furthermore implies that $\me$ is a refinement of $\mc$ since every covering set is also an extending set.  We refer to \figref{fig:mcme} for an example tournament $T$ where $\me[T]$ happens to be \emph{strictly} contained in $\mc[T]$.\footnote{This is also the case for a tournament of order eight given by \citet{Dutt90a}.}

\begin{figure}[htb]
  \centering\figuresize
  \def\nd{0.66em} 
  \def\ra{0.8em} 
  \begin{tikzpicture}[node distance=\nd,vertex/.style={circle,draw,minimum size=2*\ra,inner sep=0pt}]
	\draw
    node[circle,fill=black!15,draw=black!15,minimum size=3*\nd+4*\ra] (c1) {} 
    +(150:\nd+\ra) node[vertex] (a1) {$a_1$} 
    +(30:\nd+\ra) node[vertex] (a2)  {$a_2$}
    +(270:\nd+\ra) node[vertex] (a3)  {$a_3$} ++(0:5*\nd+4*\ra)
    node[circle,fill=black!15,draw=black!15,minimum size=3*\nd+4*\ra] (c2) {} 
    +(150:\nd+\ra) node[vertex] (a4) {$a_4$} 
    +(30:\nd+\ra) node[vertex] (a5)  {$a_5$}
    +(270:\nd+\ra) node[vertex] (a6)  {$a_6$} ++(240:5*\nd+4*\ra)
    node[circle,fill=black!15,draw=black!15,minimum size=3*\nd+4*\ra] (c3) {} 
    +(150:\nd+\ra) node[vertex] (a7) {$a_7$} 
    +(30:\nd+\ra) node[vertex] (a8)  {$a_8$}
    +(270:\nd+\ra) node[vertex] (a9)  {$a_9$} ++(270:3*\nd+3*\ra) 
    node[vertex] (a10) {$a_{10}$};

	\draw [-latex] (a1) to (a2);
	\draw [-latex] (a2) to (a3);
	\draw [-latex] (a3) to (a1);
	\draw [-latex] (a4) to (a5);
	\draw [-latex] (a5) to (a6);
	\draw [-latex] (a6) to (a4);
	\draw [-latex] (a7) to (a8);
	\draw [-latex] (a8) to (a9);
	\draw [-latex] (a9) to (a7);
	\draw [-latex] (c1) to (c2);
	\draw [-latex] (c2) to (c3);
	\draw [-latex] (c3) to (c1);
	\draw [-latex] (a10) .. controls +(157.5:4*\ra) .. (a3);
	\draw [-latex] (a10) to (a9);
	\draw [-latex] (a10) .. controls +(22.5:4*\ra) .. (a6);
 \end{tikzpicture}
 \caption{Example tournament $T=(A,{\succ})$ where $\mc$ and $\me$ differ ($\mc(T)=A$ and $\me(T)=A\setminus\{a_{10}\}$). $a_{10}$ only dominates $a_3$, $a_6$, and $a_9$.}
 \label{fig:mcme}
\end{figure}

A remarkable property of $\me$ is that, just like $\ba$, it is capable of ruling out alternatives in \emph{regular} tournaments, \ie it satisfies \irr. No irregular tournament solution is known to satisfy all four basic properties. However, if Conjecture~\ref{con:me} were true, $\me$ would be such a concept.


\begin{savequote}
\sffamily
I represent collective-choice processes by mathematical objects that depict relatively little of those processes' structural complexity; this enhances the generality of my conclusions.
\qauthor{T. Schwartz, 1986}
 \end{savequote}


\section{Retentiveness and Stability}\label{sec:teq}

Motivated by cooperative majority voting, \citet{Schw90a} introduced a tournament solution based on a notion he calls \emph{retentiveness}.
It turns out that retentiveness bears some similarities to stability. For example, the top cycle can be represented as a minimal stable set as well as a minimal retentive set, albeit using different underlying tournament solutions.

\subsection{The Tournament Equilibrium Set}\label{sec:teqsection}

The intuition underlying retentive sets is that alternative $a$ is only ``properly'' dominated by alternative $b$ if $b$ is chosen among $a$'s dominators by some underlying tournament solution~$S$. A set of alternatives is then called $S$-retentive if none of its elements is properly dominated by some outside alternative with respect to $S$.\footnote{In analogy to the discussion at the beginning of \secref{sec:directedness}, a set of alternatives $B$ may be called retentive if it consists precisely of those alternatives not upset by $B$. Here, $a$ is upset by $B$ if $a\not\in \bigcup_{b\in B} S(\dom(b))$ for some underlying solution concept $S$.}

\begin{definition}\label{def:sretentive}
Let $S$ be a tournament solution and $T=(A,{\succ})$ a tournament.  Then, $B\subseteq A$ is \emph{retentive} in $T$ with respect to tournament solution $S$ (or \emph{$S$-retentive}) if $B\ne \emptyset$ and $S(\dom(b))\subseteq B$ for all~$b\in B$ such that $\dom(b)\ne \emptyset$. The set of $S$-retentive sets for a given tournament $T=(A,{\succ})$ will be denoted by $\mathcal{R}_S(T)=\{ B\subseteq A\mid B \text{ is $S$-retentive in $T$} \}$.
\end{definition}
$S$-retentive sets are guaranteed to exist since the set of all alternatives $A$ is trivially $S$-retentive in $(A,{\succ})$ for all tournament solutions $S$. In analogy to Definition~\ref{def:minimalsstable}, the union of minimal $S$-retentive sets defines a tournament solution.

\begin{definition}\label{def:minimalsretentive}
Let $S$ be a tournament solution. Then, the tournament solution $\mr{S}$ is defined as
\[
\mr{S}(T) = \bigcup \min_\subseteq(\mathcal{R}_S(T))\text{.}
\]
\end{definition}
It is easily verified that $\mr{S}$ is well-defined as a tournament solution as there are no $S$-retentive sets that do not contain the Condorcet winner whenever one exists.

As an example, consider the tournament solution $S_{\mathcal{M}_1}$ that always returns all alternatives, \ie $S_{\mathcal{M}_1}((A,{\succ}))=A$.
The unique minimal retentive set with respect to $S_{\mathcal{M}_1}$ is the top cycle, that is $\tc=\ms{S}_{\mathcal{M}_2}=\mr{S}_{\mathcal{M}_1}$.

\citeauthor{Schw90a} introduced retentiveness in order to recursively define the \emph{tournament equilibrium set} ($\teq$) as the union of minimal $\teq$-retentive sets. This recursion is well-defined because the order of the dominator set of any alternative is strictly smaller than the order of the original tournament.

\begin{definition}[\citealp{Schw90a}]\label{def:teq}
The \emph{tournament equilibrium set} (TEQ) of a tournament $T$ is defined recursively as $\teq(T)=\mr{\teq}(T)$.
\end{definition}
In other words, $\teq$ is the unique fixed point of the $\circ$-operator.

\citeauthor{Schw90a} conjectured that every tournament admits a \emph{unique} minimal $\teq$-retentive set. Despite several attempts to prove or disprove this statement \citep[\eg][]{LLL93a,Houy09a}, the statement has remained a conjecture. A recent computer analysis failed to find a counter-example in all tournaments of order $12$ or less and a fairly large number of random tournaments \citep{BFHM09a}.

\begin{conjecture}[\citealp{Schw90a}]\label{con:teq}
$\mathcal{R}_{\teq}$ is directed.
\end{conjecture}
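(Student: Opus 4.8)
The plan is to argue by induction on the order of the tournament, mirroring \citeauthor{Dutt88a}'s proof of \thmref{thm:Dutta}. The first move is a reduction. For retentiveness the easy direction of the analogue of \lemref{lem:pairwise} is immediate: if $B_1,B_2\in\mathcal{R}_{\teq}(T)$ and $C=B_1\cap B_2\ne\emptyset$, then for every $b\in C$ with $\dom(b)\ne\emptyset$ we have $\teq(\dom(b))\subseteq B_1$ and $\teq(\dom(b))\subseteq B_2$, hence $\teq(\dom(b))\subseteq C$; so $C$ is itself $\teq$-retentive. Thus $\mathcal{R}_{\teq}(T)$ is closed under non-empty intersection, and since $\teq$-retentive sets are non-empty, $\mathcal{R}_{\teq}$ is directed if and only if it pairwise intersects; passing to inclusion-minimal sets, it suffices to show that no tournament contains two \emph{disjoint} minimal $\teq$-retentive sets. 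That the entire difficulty is concentrated in this pairwise-intersection step — the closure step being trivial — already parallels the situation for covering sets.

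So suppose $T=(A,{\succ})$ is a counterexample of smallest order, with disjoint minimal $\teq$-retentive sets $B_1,B_2$. By the induction hypothesis, $\mathcal{R}_{\teq}(\dom(a))$ is directed for every $a\in A$, so $\teq(\dom(a))$ is the \emph{unique} minimal $\teq$-retentive set of the smaller tournament $\dom(a)$. Following Dutta, I would analyse the dominance pattern between $B_1$, $B_2$, and $R=A\setminus(B_1\cup B_2)$, aiming at two reductions: (i) reduce to the \emph{tight} case $A=B_1\cup B_2$ by deleting outside alternatives while checking that $B_1$ and $B_2$ remain $\teq$-retentive; and (ii) in the tight case, extract a contradiction from the retentiveness constraints — for instance by picking a $\succ$-maximal element of $B_1$ inside a suitable dominator set, tracking which of $B_1,B_2$ the corresponding $\teq$ lands in, and packaging this into a counting or extremal argument in the spirit of Dutta's. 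The tight case should in particular kill the configuration $B_1\succ B_2$: then $\dom(b)\supseteq B_1$ for every $b\in B_2$ and $\dom(b)\setminus B_1\subseteq B_2$, so $B_1$ is a dominant subset of $\dom(b)$ and, since $\teq$ refines the top cycle, $\teq(\dom(b))\subseteq\tc(\dom(b))\subseteq B_1$, contradicting $\teq(\dom(b))\subseteq B_2$.

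The main obstacle is that neither reduction is licensed by anything available. Deleting an alternative $x\in R$ alters $\dom(b)$ for every $b$ that $x$ dominated, and because $\teq$ is not known to be monotonic or to satisfy \ssp — in fact a whole range of such regularity properties of $\teq$ are known to be \emph{equivalent} to \conref{con:teq} itself — one cannot guarantee that $\teq(\dom(b))$ still stays inside $B_1$ after the deletion. The induction hypothesis yields uniqueness of $\teq$ on each individual proper dominator set, but gives no uniform control over how $\teq(\dom(b))$ moves as $b$ ranges over a retentive set or as the ambient tournament is shrunk, and it is exactly this self-referential coupling of $\teq$ with its dominator sets that has defeated every attempt since \citet{Schw90a}. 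My honest expectation is that a proof would require either a far more delicate potential/counting argument than Dutta's or a genuinely new structural understanding of how transitive trajectories distribute across a tournament; and since the only positive evidence is exhaustive verification up to order twelve, I would in parallel keep open the possibility that the conjecture is \emph{false} and hunt for a counterexample among carefully balanced tournaments of large order. By \thmref{thm:teqsq}, either resolution also settles \conref{con:me}.
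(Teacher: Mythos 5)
You have not proved the statement, and neither does the paper: this is Schwartz's conjecture, stated explicitly as an open problem and supported only by exhaustive verification up to order twelve, so there is no proof of record to compare your attempt against. To your credit, your proposal does not pretend otherwise. The portions you actually argue are correct and consistent with what the paper records. Closure of $\mathcal{R}_{\teq}$ under non-empty intersection is exactly as easy as you say (the paper notes this in \secref{sec:teqsection}), so directedness is indeed equivalent to the non-existence of two disjoint $\teq$-retentive sets; and your disposal of the tight configuration $B_1\succ B_2$ is sound, since $\teq\subseteq\ba\subseteq\uc\subseteq\tc$ holds unconditionally by Schwartz's own results, so $\teq(\dom(b))$ cannot land in $B_2$ when $B_1$ is dominant in $\dom(b)$.

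The genuine gap is the one you diagnose yourself, and it is worth stating why it is fatal rather than merely inconvenient. A Dutta-style induction must delete alternatives outside $B_1\cup B_2$ while certifying that $B_1$ and $B_2$ remain retentive, and that certification requires stability of $\teq(\dom(b))$ under shrinking the ambient tournament --- precisely the regularity (\wsp, \ssp, monotonicity) that \citet{LLL93a} and \citet{Houy09a} showed to be \emph{equivalent} to the conjecture. The induction hypothesis gives uniqueness of $\teq(\dom(b))$ for each fixed $b$ in the fixed tournament, but no control over how that set moves under deletion, so step (i) of your plan cannot be licensed and step (ii) is never reached; there is no way to break this circularity from inside the induction. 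Your instinct to keep falsity on the table was the right one: the conjecture was subsequently refuted (Brandt, Chudnovsky, Kim, Liu, Norin, Scott, Seymour, and Thomass\'e, 2013) by a non-constructive counterexample on roughly $10^{136}$ alternatives, which explains at once why every inductive attack failed and why computer search at small orders found nothing. Relative to this paper, the correct deliverable is therefore not a proof but what the paper actually supplies: the reduction to pairwise intersection and the chain of consequences (\thmref{thm:teqsq}, \thmref{thm:teqstable}, \coref{cor:teqsubset}) that would follow if the conjecture held.
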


It is easily appreciated that the non-empty intersection of two $S$-retentive sets is also $S$-retentive. 
%
%
%
As a consequence, Conjecture~\ref{con:teq} is equivalent to the statement that there are no two disjoint $\teq$-retentive sets in any tournament.
Unfortunately, and somewhat surprisingly, it is not known whether $\teq$ satisfies \emph{any} of the basic properties defined in \secref{sec:properties}. However, \citet{LLL93a} and \citet{Houy09a} 
 have shown that $\teq$ satisfies any of the basic properties if and only if $\mathcal{R}_{\teq}$ is directed, and is strictly contained in $\mc$ if $\mathcal{R}_{\teq}$ is directed. We will strengthen the last statement by showing that $\teq$ is also strictly contained in $\me$ if $\mathcal{R}_{\teq}$ is directed.

\subsection{Inclusion of TEQ in ME}

In \secref{sec:maxmaxqualified}, the Banks set was characterized as the finest tournament solution satisfying strong retentiveness. It turns out that, if $\mathcal{R}_{\teq}$ is directed, $\teq$ is the finest tournament solution satisfying a very natural weakening of strong retentiveness, where the inclusion of the choice sets of dominator sets is only required to hold for alternatives contained in the original choice set.

\begin{definition}
A tournament solution~$S$ satisfies (weak) \emph{retentiveness} if $S(\dom(a))\subseteq S(T)$ for all tournaments $T$ and $a\in S(T)$.
\end{definition}
In other words, a tournament solution $S$ satisfies retentiveness if and only if $S(T)$ is $S$-retentive for all tournaments $T$.
It follows from Schwartz's axiomatization of $\teq$ that $\teq$ is the finest tournament solution satisfying retentiveness, given that Conjecture~\ref{con:teq} is true.

\begin{theorem}[\citealp{Schw90a}]\label{thm:teqdcon}
$\teq$ is the finest tournament solution satisfying retentiveness if $\mathcal{R}_{\teq}$ is directed.
\end{theorem}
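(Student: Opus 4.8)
The plan is to prove the two required inclusions separately, following the template already used for Theorem~\ref{thm:bankssdc}. First I would show that every tournament solution $S$ satisfying retentiveness refines $\teq$, i.e.\ $\teq(T)\subseteq S(T)$ for all $T=(A,{\succ})$, using induction on the order of $T$. For the base case, a one-element tournament forces both solutions to return the single alternative. For the inductive step, fix $T$ and let $B$ be the unique minimal $\teq$-retentive set (which exists by the assumed directedness of $\mathcal{R}_{\teq}$, so $\teq(T)=B$). Since $S$ satisfies retentiveness, $S(T)$ is $S$-retentive, so for every $a\in S(T)$ with $\dom(a)\ne\emptyset$ we have $S(\dom(a))\subseteq S(T)$; since $|\dom(a)|<|A|$, the induction hypothesis gives $\teq(\dom(a))\subseteq S(\dom(a))$. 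The point I would need is that $S(T)$ contains \emph{some} $\teq$-retentive set: from $S(T)$ being $S$-retentive and $\teq\subseteq S$ on all strictly smaller tournaments, one deduces $\teq(\dom(a))\subseteq S(\dom(a))\subseteq S(T)$ for all $a\in S(T)$, which says exactly that $S(T)$ is $\teq$-retentive. By directedness of $\mathcal{R}_{\teq}$, the unique minimal $\teq$-retentive set $B=\teq(T)$ is contained in $S(T)$, completing the induction.

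Second I would verify that $\teq$ itself satisfies retentiveness, which is immediate from the definition: $\teq(T)=\mr{\teq}(T)$ is the union of the minimal $\teq$-retentive sets, and when $\mathcal{R}_{\teq}$ is directed this union equals the unique minimal $\teq$-retentive set, which is in particular $\teq$-retentive; hence $\teq(\dom(a))\subseteq\teq(T)$ for every $a\in\teq(T)$ with $\dom(a)\ne\emptyset$, i.e.\ $\teq(T)$ is $\teq$-retentive, which is precisely retentiveness. Combining the two parts, $\teq$ is a retentive tournament solution refined by every retentive tournament solution, hence the finest one.

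The step I expect to be the main obstacle is making the induction in the first part genuinely rigorous, because the recursive definition of $\teq$ and the hypothesis $\teq\subseteq S$ are entangled: one must be careful that ``$\teq(\dom(a))$'' is well-defined via Schwartz's recursion (each $\dom(a)$ has strictly smaller order, so this is fine) and that directedness of $\mathcal{R}_{\teq}$ is invoked only to identify $\teq(T)$ with the unique minimal $\teq$-retentive set, both at the level of $T$ and of all the dominator subtournaments $\dom(a)$. A secondary subtlety is checking that retentiveness of $S$ is used in the correct direction---$S(T)$ being $S$-retentive gives the inclusion $S(\dom(a))\subseteq S(T)$ only for $a\in S(T)$, which is exactly what is needed to conclude that $S(T)\in\mathcal{R}_{\teq}(T)$ once the inductive bound $\teq(\dom(a))\subseteq S(\dom(a))$ is in hand. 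Everything else is bookkeeping already carried out in the proof of Theorem~\ref{thm:bankssdc}, so I would keep this proof correspondingly short, possibly just citing \citet{Schw90a} for the details as the theorem statement already does.
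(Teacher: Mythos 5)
The paper does not actually prove \thmref{thm:teqdcon}: it is stated as a citation to \citet{Schw90a}, with only the remark that it ``follows from Schwartz's axiomatization of $\teq$.'' Your argument is therefore not competing with an in-paper proof, and as a self-contained derivation it is correct. The key step---that for any retentive $S$, the set $S(T)$ is $\teq$-retentive because $\teq(\dom(a))\subseteq S(\dom(a))\subseteq S(T)$ for every $a\in S(T)$ with $\dom(a)\ne\emptyset$, the first inclusion by the induction hypothesis on the strictly smaller tournament $\dom(a)$ and the second by retentiveness of $S$---is exactly right, and directedness of $\mathcal{R}_{\teq}$ is invoked precisely where it is needed, namely to conclude that the unique minimal $\teq$-retentive set $\teq(T)$ is contained in every $\teq$-retentive set, in particular in $S(T)$. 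The second half is even slightly stronger than you state: the union of $\teq$-retentive sets is itself $\teq$-retentive, so $\teq$ satisfies retentiveness regardless of directedness, though under directedness your shorter argument suffices. Structurally your induction mirrors the simultaneous inductions the paper does carry out for \thmref{thm:teqsq} and \thmref{thm:teqstable}, so it fits the paper's style; the only cosmetic caveat is that minimality of a retentive set should be unwound as ``contains no retentive proper subset,'' which together with directedness yields the containment in every retentive set that you use.
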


The fact that $S_\mathcal{Q}$ satisfies strong retentiveness for every concept of qualified subsets $\mathcal{Q}$ can be used to show that every $S_\mathcal{Q}$-stable set is $\ms{S}_\mathcal{Q}$-retentive, which has a number of useful consequences.

\begin{lemma}\label{lem:selfret}
Let $\mathcal{Q}$ be a concept of qualified subsets such that $\mathcal{S}_{S_\mathcal{Q}}$ is directed for all tournaments of order $n$ or less.
Then, $\mathcal{S}_{S_\mathcal{Q}}(T)\subseteq \mathcal{R}_{\ms{S}_\mathcal{Q}}(T)$ for all tournaments $T$ of order $n+1$.
\end{lemma}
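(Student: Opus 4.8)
The plan is to show that any $S_\mathcal{Q}$-stable set $B$ in a tournament $T=(A,{\succ})$ of order $n+1$ is $\ms{S}_\mathcal{Q}$-retentive, i.e.\ that $\ms{S}_\mathcal{Q}(\dom(b))\subseteq B$ for every $b\in B$ with $\dom(b)\ne\emptyset$. Fix such a $b$ and write $D=\dom(b)$. Since $b$ itself is not in $D$, the dominator set $D$ has order at most $n$, so by hypothesis $\mathcal{S}_{S_\mathcal{Q}}$ is directed on $D$ and $\ms{S}_\mathcal{Q}(D)$ is the \emph{unique} minimal $S_\mathcal{Q}$-stable set of $D$. Hence it suffices to exhibit \emph{some} $S_\mathcal{Q}$-stable subset of $D$ that is contained in $B$; the minimal one will then be contained in it, and thus in $B$.

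The natural candidate is $C=B\cap D$, i.e.\ the dominators of $b$ that lie in $B$. First I would check $C\ne\emptyset$: if $C$ were empty then $b$ would be dominated only by alternatives outside $B$, but $b\in B$ and $B$ is $S_\mathcal{Q}$-stable, so this is fine a priori — however, retentiveness only needs $\dom(b)\ne\emptyset$ \emph{within} the whole tournament, and I should handle the case $C=\emptyset$ separately (then $\ms{S}_\mathcal{Q}(D)$ need not be inside $B$, so this case requires care — see below). Assuming $C\ne\emptyset$, the main task is to prove $C$ is $S_\mathcal{Q}$-stable in $D$, i.e.\ for every $a\in D\setminus C$, $a\notin S_\mathcal{Q}(C\cup\{a\})$. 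Given such an $a$, note $a\in D\setminus B$ (since $a\in D$ but $a\notin C=B\cap D$), so stability of $B$ gives $a\notin S_\mathcal{Q}(B\cup\{a\})$. Now I would argue as in the proof of \lemref{lem:stablestable}: take $[Q,a]\in\max_\subseteq(\mathcal{Q}(C\cup\{a\}))$; since $Q\subseteq C\subseteq B$, this set lives in $B\cup\{a\}$ but cannot be maximal there, so there is some $b'\in B$ with $[Q\cup\{a\},b']\in\mathcal{Q}(B\cup\{a\})$. The key point is that $b'\succ[Q,a]$, in particular $b'\succ a$ and $a\in D$ means... — here I need $b'$ to dominate $a$, and $a\in\dom(b)$; but I actually need $b'\in D=\dom(b)$ to conclude $b'\in C$. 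This is where the argument must work harder.

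The \textbf{main obstacle} is precisely ensuring the extending alternative $b'$ lands back inside $D=\dom(b)$. If $b'\succ b$ rather than $b\succ b'$, then $b'\notin D$ and $C$ alone may fail to be stable. To overcome this I would bring $b$ into play: consider instead the set $C\cup\{b\}$ or use strong retentiveness of $S_\mathcal{Q}$ (\lemref{lem:dcon}) together with the fact that $[Q,a]\succ b$ would follow if the $Q$'s were chosen among dominators of $b$. Concretely, I expect the right move is: observe $[Q,a]$ together with $b$ — since $Q\cup\{a\}\subseteq D=\dom(b)$, we have $[Q,a]\succ b$, so $[Q\cup\{a\},b]$ or $[Q\cup\{b\},a]$ is a qualified subset of $D\cup\{b\}=A$ restricted appropriately; then use the fusion condition to merge $[Q,a]$ with $[\{b'\},\,\cdot\,]$ and derive a contradiction to maximality of $[Q,a]$ in $C\cup\{a\}$ unless $b'\in\dom(b)$. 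Alternatively, and perhaps more cleanly, first show $\ms{S}_\mathcal{Q}(D)\subseteq S_\mathcal{Q}(D)\subseteq S_\mathcal{Q}(A\cup\{b\})=S_\mathcal{Q}(A)$ via strong retentiveness (Lemma~\ref{lem:dcon}) and then combine with $S_\mathcal{Q}(A)$-related stability of $B$ — but since $B$ need not equal $S_\mathcal{Q}(A)$, the stability argument above still has to be run. I would therefore structure the final proof as: (1) reduce to finding an $S_\mathcal{Q}$-stable subset of $D$ inside $B$ using directedness on order $\le n$; (2) take $C=B\cap\dom(b)$, possibly enlarged by $b$ if needed; (3) verify stability of $C$ in $D$ by the qualified-subset extension/fusion manipulation above, the crux being that the witness $b'$ is forced into $\dom(b)$ by the fusion condition and the maximality of $[Q,a]$.
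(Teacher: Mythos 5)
You have the right skeleton---reduce, via directedness on tournaments of order at most $n$, to exhibiting \emph{some} $S_\mathcal{Q}$-stable subset of $D=\dom(b)$ that lies inside $B$, and take $C=\dom_B(b)$ as the candidate---and this is exactly how the paper proceeds. But the crux, showing that $C$ is $S_\mathcal{Q}$-stable in $D$, is left as an acknowledged obstacle with several speculative escape routes, none of which is carried through, so as written the proof is incomplete. Your direct attack via maximal qualified subsets does genuinely stall where you say it does: the witness $b'$ with $[Q\cup\{a\},b']\in\mathcal{Q}(B\cup\{a\})$ supplied by the stability of $B$ need not dominate $b$, hence need not land in $C$, and no fusion manipulation obviously forces it to.

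The missing idea is to apply strong retentiveness (\lemref{lem:dcon}) not to the whole tournament but to the subtournament on $B\cup\{a\}$ at the vertex $b$. Since $a\in\dom(b)$, the dominator set of $b$ inside $B\cup\{a\}$ is precisely $C\cup\{a\}$, so \lemref{lem:dcon} yields $S_\mathcal{Q}(C\cup\{a\})\subseteq S_\mathcal{Q}(B\cup\{a\})$, while stability of $B$ yields $S_\mathcal{Q}(B\cup\{a\})\subseteq B$. As $a\notin B$, this gives $a\notin S_\mathcal{Q}(C\cup\{a\})$ for every $a\in D\setminus C$, which is exactly the stability of $C$ in $D$; all qualified-subset bookkeeping is avoided. (Your worry about $C=\emptyset$ also dissolves: if some $a\in D\setminus B$ exists, the inclusion $S_\mathcal{Q}(C\cup\{a\})\subseteq B$ forces $C\ne\emptyset$, and if $D\subseteq B$ then $D$ itself is a stable subset of $D$ contained in $B$.) Your alternative suggestion of deriving $\ms{S}_\mathcal{Q}(D)\subseteq S_\mathcal{Q}(A)$ from strong retentiveness does not help, for the reason you yourself note: $B$ is an arbitrary stable set, not $S_\mathcal{Q}(A)$.
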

\begin{proof}
Let $T=(A,\succ)$ be a tournament of order $n+1$, $B\in \mathcal{S}_{S_\mathcal{Q}}(T)$, $b\in B$, and $C=\dom_B(b)$. We show that $C$ is $S_\mathcal{Q}$-stable in $\dom(b)$. Let $a\in \dom(b) \setminus B$ and consider the tournament restricted to $C\cup\{a\}$. We know from $B$'s stability that $S_\mathcal{Q}(B\cup\{a\})\subseteq B$. Furthermore, since $S_\mathcal{Q}$ satisfies strong retentiveness (\lemref{lem:dcon}), $S_\mathcal{Q}(C\cup\{a\})\subseteq C$, which shows that $C$ is $S_\mathcal{Q}$-stable in $C\cup\{a\}$. $C\cup\{a\}$ is of order $n$ or less. Hence, there is a unique minimal $S_\mathcal{Q}$-stable set in $C\cup\{a\}$, which is contained in $C$.
\end{proof}

\begin{theorem}\label{thm:teqsq}
Let $\mathcal{Q}$ be a concept of qualified subsets. Then,
\begin{enumerate}[label=\textit{(\roman*)}]
\item $\ms{S}_\mathcal{Q}$ satisfies retentiveness if $\mathcal{S}_{\mathcal{Q}}$ is directed,
\item $\mathcal{S}_{\mathcal{Q}}$ is directed if $\mathcal{R}_{\teq}$ is directed, and
\item $\teq \subseteq \ms{S}_\mathcal{Q}$ if $\mathcal{R}_{\teq}$ is directed.
\end{enumerate}
\end{theorem}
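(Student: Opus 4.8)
The three parts will be proved in the order listed, with each feeding into the next. For part~(i), I would argue directly: assuming $\mathcal{S}_\mathcal{Q}$ is directed, I need to show that $\ms{S}_\mathcal{Q}(T)$ is $\ms{S}_\mathcal{Q}$-retentive for every tournament $T=(A,\succ)$, i.e.\ that $\ms{S}_\mathcal{Q}(\dom(b))\subseteq \ms{S}_\mathcal{Q}(T)$ for every $b\in \ms{S}_\mathcal{Q}(T)$ with $\dom(b)\neq\emptyset$. Write $B=\ms{S}_\mathcal{Q}(T)$ and $C=\dom_B(b)$. The plan is to reuse the mechanism of \lemref{lem:selfret}: since $B$ is $S_\mathcal{Q}$-stable and $S_\mathcal{Q}$ satisfies strong retentiveness (\lemref{lem:dcon}), $C$ is $S_\mathcal{Q}$-stable in $\dom(b)$, hence (by directedness of $\mathcal{S}_\mathcal{Q}$) contains the unique minimal $S_\mathcal{Q}$-stable set $\ms{S}_\mathcal{Q}(\dom(b))$. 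So $\ms{S}_\mathcal{Q}(\dom(b))\subseteq C\subseteq B$, which is exactly retentiveness of $\ms{S}_\mathcal{Q}$. (Here \lemref{lem:selfret} itself is the essential ingredient; part~(i) is basically its corollary once directedness is assumed globally.)

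For part~(ii), I would proceed by induction on the tournament order $n$, using \lemref{lem:selfret} as the inductive engine. Suppose $\mathcal{R}_{\teq}$ is directed and, as induction hypothesis, that $\mathcal{S}_\mathcal{Q}$ is directed on all tournaments of order $\le n$. Take $T$ of order $n+1$ and suppose for contradiction it has two disjoint $S_\mathcal{Q}$-stable sets $B_1,B_2$. By \lemref{lem:selfret} (applicable because the hypothesis gives directedness below order $n+1$), both $B_1$ and $B_2$ are $\ms{S}_\mathcal{Q}$-retentive in $T$. Now I would invoke part~(i) together with \thmref{thm:teqdcon}: since $\ms{S}_\mathcal{Q}$ satisfies retentiveness and $\teq$ is the \emph{finest} tournament solution satisfying retentiveness (given directedness of $\mathcal{R}_{\teq}$), we get $\teq\subseteq\ms{S}_\mathcal{Q}$. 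But a $\teq$-retentive set need not be $\ms{S}_\mathcal{Q}$-retentive in general, so the inclusion I actually want runs the other way; the cleaner route is: an $\ms{S}_\mathcal{Q}$-retentive set, being "at least as demanding," should itself be $\teq$-retentive once $\teq\subseteq \ms{S}_\mathcal{Q}$ — because for $b$ in such a set, $\teq(\dom(b))\subseteq \ms{S}_\mathcal{Q}(\dom(b))\subseteq B_i$. Hence $B_1,B_2$ are disjoint $\teq$-retentive sets, contradicting directedness of $\mathcal{R}_{\teq}$. This contradiction establishes directedness of $\mathcal{S}_\mathcal{Q}$ at order $n+1$, closing the induction.

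Part~(iii) then follows immediately: if $\mathcal{R}_{\teq}$ is directed, part~(ii) gives that $\mathcal{S}_\mathcal{Q}$ is directed, so by part~(i) $\ms{S}_\mathcal{Q}$ satisfies retentiveness, and \thmref{thm:teqdcon} (finest retentive solution) yields $\teq\subseteq\ms{S}_\mathcal{Q}$.

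I expect the main obstacle to be the logical bookkeeping in part~(ii): the induction hypothesis must be exactly strong enough that \lemref{lem:selfret} applies at the inductive step, and one has to be careful that \lemref{lem:selfret} is stated for directedness "of order $n$ or less" implying a conclusion at order $n+1$, so the induction must be set up to match that shape. The other delicate point is the direction of the retentiveness-transfer argument — verifying that an $\ms{S}_\mathcal{Q}$-retentive set is automatically $\teq$-retentive once $\teq\subseteq\ms{S}_\mathcal{Q}$ is known — since this is what converts the (already established) inclusion $\teq\subseteq\ms{S}_\mathcal{Q}$ into a contradiction with directedness of $\mathcal{R}_{\teq}$. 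Everything else (parts (i) and (iii)) is essentially a reassembly of \lemref{lem:dcon}, \lemref{lem:selfret}, and \thmref{thm:teqdcon}.
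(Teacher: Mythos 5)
Your proposal is correct and follows essentially the same route as the paper: the paper proves all three parts by a simultaneous induction on the tournament order, using \lemref{lem:selfret} to show that $S_\mathcal{Q}$-stable sets are $\ms{S}_\mathcal{Q}$-retentive (giving part~(i)), then deriving the contradiction for part~(ii) from the chain $\teq(\dom(b))\subseteq\ms{S}_\mathcal{Q}(\dom(b))\subseteq B_i$ (the first inclusion coming from the induction hypothesis for part~(iii) on the smaller dominator sets), and obtaining part~(iii) from \thmref{thm:teqdcon}. The bookkeeping subtlety you flag --- that the inclusion $\teq\subseteq\ms{S}_\mathcal{Q}$ is needed only on dominator sets of strictly smaller order, where the induction hypothesis supplies it --- is exactly how the paper resolves it.
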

\begin{proof}
We prove all statements simultaneously by induction on the tournament order $n$. The basis is straightforward. Now, assume that all three implications hold for tournaments of order $n$ or less. If $\mathcal{S}_{\mathcal{Q}}$ is directed for tournaments of order $n+1$ or less, we can apply \lemref{lem:selfret} to show that every $S_\mathcal{Q}$-stable set is $\ms{S}_\mathcal{Q}$-retentive in such tournaments. Hence, $\ms{S}_\mathcal{Q}$ satisfies retentiveness and the first statement holds for tournaments of order $n+1$.

For the second statement, assume for contradiction that there is a tournament $T=(A,\succ)$ of order $n+1$ that contains two disjoint $S_\mathcal{Q}$-stable sets $B_1$ and $B_2$ (while $\mathcal{S}_{\mathcal{Q}}$ is directed for all tournaments of order $n$ or less and $\mathcal{R}_{\teq}$ is directed for all tournaments of order $n+1$ or less, including $T$). It follows from \lemref{lem:selfret} that $B_1$ and $B_2$ are also $\ms{S}_\mathcal{Q}$-retentive. Moreover, the induction hypothesis of the third statement implies that $\teq(\dom(a))\subseteq \ms{S}_\mathcal{Q}(\dom(a))$ for all $a\in A$, which implies that $B_1$ and $B_2$ are $\teq$-retentive, a contradiction.

In order to show the third statement, let $\mathcal{R}_{\teq}$ be directed for tournaments of order $n+1$ or less. It follows from the second implication that $\mathcal{S}_{\mathcal{Q}}$ of such tournaments is directed as well and from the first that $\ms{S}_\mathcal{Q}$ satisfies retentiveness for such tournaments. We know from \thmref{thm:teqdcon} that $\teq$ is contained in all tournament solutions that satisfy retentiveness. Hence, $\teq(T)\subseteq \ms{S}_\mathcal{Q}(T)$ for all tournaments of order $n+1$.
\end{proof}

In other words, $\tc$ and $\mc$ satisfy retentiveness and $\me$ satisfies retentiveness if \conref{con:me} holds. Furthermore, Conjecture~\ref{con:teq} is at least as strong as Conjecture~\ref{con:me}. Similarly, the directedness of $\mathcal{S}_{\mathcal{M}}$, which was proved by \citet{Dutt88a} (see \thmref{thm:Dutta}), also follows from Conjecture~\ref{con:teq}. Finally, given that \conref{con:teq}  holds, $\teq$ is a refinement of 
all tournament solutions $\ms{S}_\mathcal{Q}$ where $\mathcal{Q}$ is a concept of qualified subsets. In particular, we have the following.

\begin{corollary}\label{cor:teqsubset}
 $\teq\subseteq \me$ if $\mathcal{R}_{\teq}$ is directed
\end{corollary}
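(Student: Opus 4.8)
The plan is to obtain the corollary as the special case $\mathcal{Q}=\mathcal{M}^*$ of \thmref{thm:teqsq}. Recall from \secref{sec:maxsubsets} that $\mathcal{M}^*$ is a concept of qualified subsets (it satisfies closure, independence, and fusion, as observed just after \defref{def:qualified}) and that $\ba=S_{\mathcal{M}^*}$; hence, by the definition of the minimal extending set given in \secref{sec:directedness}, $\me=\ms{\ba}=\ms{S}_{\mathcal{M}^*}$. Part \textit{(iii)} of \thmref{thm:teqsq} asserts that $\teq\subseteq\ms{S}_\mathcal{Q}$ holds for \emph{every} concept of qualified subsets $\mathcal{Q}$ whenever $\mathcal{R}_{\teq}$ is directed. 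Instantiating this at $\mathcal{Q}=\mathcal{M}^*$ and substituting the identity $\me=\ms{S}_{\mathcal{M}^*}$ yields $\teq\subseteq\me$ under the hypothesis that $\mathcal{R}_{\teq}$ is directed, which is exactly the claim.

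Since all of the substantive work already sits inside \thmref{thm:teqsq}, there is essentially no obstacle left here. The real content behind that theorem is the simultaneous induction on tournament order combining \lemref{lem:selfret} (every $S_\mathcal{Q}$-stable set is $\ms{S}_\mathcal{Q}$-retentive, which in turn rests on the strong retentiveness of $S_\mathcal{Q}$ established in \lemref{lem:dcon}) with Schwartz's axiomatisation \thmref{thm:teqdcon} ($\teq$ is the finest tournament solution satisfying retentiveness). The only bookkeeping to verify for the corollary itself is the translation $\me=\ms{S}_{\mathcal{M}^*}$, which is immediate from the paragraph defining the minimal extending set, together with the already-recorded fact that $\mathcal{M}^*$ is a concept of qualified subsets.

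One may additionally note that, in combination with \propref{pro:stableinclusion} and \conref{con:me}, this gives the refined chain $\teq\subseteq\me\subseteq\mc$, sharpening the previously known inclusion $\teq\subseteq\mc$; establishing \emph{strict} containment $\teq\subsetneq\me$ (still under directedness of $\mathcal{R}_{\teq}$) would require a separating example in the spirit of \figref{fig:mcme}, but the corollary as stated needs only the inclusion.
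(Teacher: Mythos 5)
Your proposal is correct and matches the paper exactly: the corollary is obtained by instantiating part \textit{(iii)} of \thmref{thm:teqsq} at $\mathcal{Q}=\mathcal{M}^*$ and using the identity $\me=\ms{S}_{\mathcal{M}^*}$, which is precisely how the paper derives it in the sentence immediately preceding the corollary. The additional remarks on the chain $\teq\subseteq\me\subseteq\mc$ and on strictness are accurate asides but not needed for the statement as given.
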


The remaining question is whether $\teq$ and $\me$ are actually different solution concepts. 
The tournament given in Figure~\ref{fig:meteq} demonstrates that this is indeed the case. 
%
\begin{figure}[htb]
  \centering\figuresize
  \begin{tikzpicture}[node distance=0.66em,vertex/.style={circle,draw,minimum size=1.6em,inner sep=0pt}]
    \node[vertex] (a1) {$a_1$};
    \foreach \x in {2,...,8} \pgfmathtruncatemacro{\y}{\x-1}
    \node[vertex,right=of a\y](a\x){$a_\x$};
	\draw [-latex] (a8) to [bend left=45] (a4);
	\draw [-latex] (a7) to [bend left=45] (a3);
	\draw [-latex] (a6) to [bend left=45] (a2);
	\draw [-latex] (a7) to [bend right=33] (a1);
	\draw [-latex] (a8) to [bend right=45] (a1);
 \end{tikzpicture}
 \caption{Example tournament $T=(A,{\succ})$ where $\me$ and $\teq$ differ ($\me[T]=A$ and $\teq[T]=A\setminus \{a_5\}$). In particular, $A\setminus \{a_5\}$ is no extending set since $a_5\in \ba[A]$ via the non-extendable transitive set $\{a_5,a_6,a_7,a_8\}$.}
 \label{fig:meteq}
\end{figure}

\subsection{TEQ as a Minimal Stable Set}

A natural question is whether $\teq$ itself can be represented as a minimal stable set. The following two lemmas establish that this is indeed the case if $\mathcal{R}_{\teq}$ is directed. We first show that every $S$-retentive set is $\mr{S}$-stable if $S$ satisfies \wsp for strictly smaller tournaments.

\begin{lemma}\label{lem:retstable} 
Let $S$ be a tournament solution that satisfies \wsp for all tournaments of order $n$ or less.
Then, $\mathcal{R}_S(T)\subseteq\mathcal{S}_{\mr{S}}(T)$ for all tournaments $T$ of order $n+1$.
\end{lemma}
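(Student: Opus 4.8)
The plan is to fix an arbitrary $B\in\mathcal{R}_S(T)$ (with $|A|=n+1$) and an arbitrary $x\in A\setminus B$, and to establish two facts that together give $x\notin\mr{S}(B\cup\{x\})$ and hence $B\in\mathcal{S}_{\mr{S}}(T)$: \emph{(a)} $B$ is still $S$-retentive in the subtournament on $B\cup\{x\}$, and \emph{(b)} no inclusion-minimal $S$-retentive set of $B\cup\{x\}$ contains $x$. Fact \emph{(a)} is where \wsp enters. For $b\in B$ with $\dom_{B\cup\{x\}}(b)\ne\emptyset$ one has $\dom_{B\cup\{x\}}(b)\subseteq\dom(b)$, where $\dom(b)$ is a tournament of order at most $n$ (it excludes $b$); since $B$ is $S$-retentive in $T$ we get $S(\dom(b))\subseteq B\cap\dom(b)\subseteq\dom_{B\cup\{x\}}(b)$, so applying \wsp to the subtournament on $\dom(b)$ yields $S(\dom_{B\cup\{x\}}(b))\subseteq S(\dom(b))\subseteq B$. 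As $B\ne\emptyset$, this shows $B\in\mathcal{R}_S(B\cup\{x\})$.

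For fact \emph{(b)} I would argue by contradiction: suppose $R$ is a minimal $S$-retentive set of $B\cup\{x\}$ with $x\in R$. If $R\cap B\ne\emptyset$, then—using fact \emph{(a)} together with the observation (noted in the text) that the non-empty intersection of two $S$-retentive sets is again $S$-retentive—the set $R\cap B$ is $S$-retentive in $B\cup\{x\}$ and is a proper subset of $R$ (it omits $x$), contradicting minimality of $R$. Hence $R\cap B=\emptyset$, forcing $R=\{x\}$. But $\{x\}\in\mathcal{R}_S(B\cup\{x\})$ requires $S(\dom_{B\cup\{x\}}(x))\subseteq\{x\}$ whenever $\dom_{B\cup\{x\}}(x)\ne\emptyset$, which is impossible because $S$ is non-empty-valued and $\dom_{B\cup\{x\}}(x)\subseteq B$; so $\dom_{B\cup\{x\}}(x)=\emptyset$, i.e. $x$ dominates all of $B$. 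Picking any $b\in B$, the element $x$ is then the maximal element of $\dom_{B\cup\{x\}}(b)$, so $x\in\max_\prec(\dom_{B\cup\{x\}}(b))\subseteq S(\dom_{B\cup\{x\}}(b))$, contradicting fact \emph{(a)}. Thus $x\notin\bigcup\min_\subseteq(\mathcal{R}_S(B\cup\{x\}))=\mr{S}(B\cup\{x\})$, and since $x\in A\setminus B$ was arbitrary, $B$ is $\mr{S}$-stable in $T$.

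The main obstacle is fact \emph{(b)}, and specifically the point that $\mathcal{R}_S$ is not (yet) known to be directed, so one cannot simply conclude that ``the minimal $S$-retentive set of $B\cup\{x\}$ lies inside $B$''. Intersection-stability of $S$-retentive sets disposes of a minimal $R$ that meets $B$, but the genuinely new case is $R=\{x\}$, which has to be ruled out by recalling fact \emph{(a)}: ``$x$ dominates all of $B$'' is incompatible with $B$ being $S$-retentive in $B\cup\{x\}$ precisely because every tournament solution must select a Condorcet winner. A minor bookkeeping point is to keep every invocation of \wsp at order at most $n$; this is automatic, since \wsp is only ever applied to dominator sets $\dom(b)$, which omit $b$ and hence have order at most $n$.
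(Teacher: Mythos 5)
Your proposal is correct and follows essentially the same route as the paper's proof: first show $B$ remains $S$-retentive in $B\cup\{x\}$ via \wsp applied to the dominator sets (which have order at most $n$), then rule out a minimal $S$-retentive set containing $x$ by splitting into the singleton case (where $x$ would have to be a Condorcet winner, contradicting retentiveness of $B$) and the non-singleton case (where intersecting with $B$ contradicts minimality). Your write-up merely makes explicit a few steps the paper leaves implicit, such as why $S(\dom(b))\subseteq\dom_{B\cup\{x\}}(b)$ licenses the \wsp application and why the intersection $R\cap B$ is non-empty and proper.
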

\begin{proof}
Let $T=(A,\succ)$ be a tournament or order $n+1$ and $B$ an $S$-retentive set in $A$. If $B=A$, the statement is trivially satisfied. Otherwise, let $a\in A\setminus B$.  We first show that $B$ is $S$-retentive in $B\cup\{a\}$. 
Let $b$ be an arbitrary alternative in $B$.
$S$-retentiveness implies $S(\dom(b))\subseteq B$ and \wsp implies $S(\dom_{B\cup\{a\}}(b))\subseteq S(\dom(b))$. As a consequence, $S(\dom_{B\cup\{a\}}(b))\subseteq B$, and thus $B$ is $S$-retentive in $B\cup \{a\}$. 
It remains to be shown that $a$ is not contained in another minimal $S$-retentive subset of $B\cup \{a\}$. Assume for contradiction that $a$ is contained in some minimal $S$-retentive set. If $\{a\}$ itself were an $S$-retentive set, $a$ would be the Condorcet winner in $B\cup \{a\}$, contradicting the fact that $B$ is $S$-retentive in $B\cup \{a\}$. Now let $C\subset B\cup\{a\}$ with $a\in C$ and $|C|>1$ be a minimal $S$-retentive set. 
This implies that $B\cap C$ is also $S$-retentive, contradicting the minimality of $C$. It follows that $B$ is $\mr{S}$-stable.
\end{proof}

The next lemma shows that every every $\teq$-stable set is also $\teq$-retentive, assuming the directedness of $\mathcal{R}_{\teq}$.
\todo{This is $\widehat{\gamma}$!}
\begin{lemma}\label{lem:stableret2} 
$\mathcal{S}_{\teq}\subseteq \mathcal{R}_{\teq}$ if $\mathcal{R}_{\teq}$ is directed.
\end{lemma}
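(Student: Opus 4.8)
The plan is to prove the equivalent statement ``$B\in\mathcal{S}_{\teq}(T)$ implies $B\in\mathcal{R}_{\teq}(T)$'' by induction on the order of $T=(A,{\succ})$, exploiting throughout that the directedness of $\mathcal{R}_{\teq}$ (i) makes $\teq(X)=\mr{\teq}(X)$ the \emph{unique} inclusion-minimal $\teq$-retentive set of every subtournament $X$ (so $\teq(X)$ lies in every $\teq$-retentive set of $X$), and (ii) forces $\teq$ to satisfy \wsp and retentiveness, i.e.\ $\teq(X)$ is $\teq$-retentive in $X$, by \citet{LLL93a}, \citet{Houy09a}, and \thmref{thm:teqdcon}. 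The base case is immediate. For the inductive step let $B\in\mathcal{S}_{\teq}(T)$ with $B\ne A$, fix $b\in B$ with $D:=\dom(b)\ne\emptyset$, and note $|D|<|A|$. It suffices to show that $B\cap D$ is a non-empty $\teq$-stable set of the subtournament on $D$: the induction hypothesis then gives that $B\cap D$ is $\teq$-retentive in $D$, so by directedness of $\mathcal{R}_{\teq}$ we get $\teq(D)=\mr{\teq}(D)\subseteq B\cap D\subseteq B$, which is precisely the retentiveness condition for $b$; since $b$ was arbitrary, $B\in\mathcal{R}_{\teq}(T)$.

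For non-emptiness, suppose no element of $B$ dominates $b$, so $b$ is the Condorcet winner of $B$ and $\teq(B)=\{b\}$. Pick any $a\in D$, so $a\succ b$ and $a\notin B$ (as $B\cap D=\emptyset$). Since $b$ is a Condorcet winner of $B$, its dominators in $B\cup\{a\}$ form exactly $\{a\}$, so if $b\in\teq(B\cup\{a\})$ then retentiveness of $\teq$ yields $\{a\}=\teq(\{a\})\subseteq\teq(B\cup\{a\})$, contradicting $a\notin\teq(B\cup\{a\})$ (external stability of $B$); hence $b\notin\teq(B\cup\{a\})$. On the other hand, $C:=\teq(B\cup\{a\})$ is $\teq$-retentive in $B\cup\{a\}$ and, by external stability, $C\subseteq B$; the downward-restriction argument from the proof of \lemref{lem:retstable} (which only uses \wsp of $\teq$) then shows $C$ is $\teq$-retentive in $B$, whence $\teq(B)=\{b\}\subseteq C$, i.e.\ $b\in\teq(B\cup\{a\})$ --- a contradiction. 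So $B\cap D\ne\emptyset$.

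It remains to verify external stability of $B\cap D$ in $D$: for $a\in D\setminus B$ we must show $a\notin\teq((B\cap D)\cup\{a\})$, where the key observation is that $(B\cap D)\cup\{a\}=\dom_{B\cup\{a\}}(b)$ (since $a\succ b$ and the $B$-dominators of $b$ are exactly $B\cap D$). If $b\in\teq(B\cup\{a\})$, retentiveness gives $\teq(\dom_{B\cup\{a\}}(b))\subseteq\teq(B\cup\{a\})\subseteq B$ (the last inclusion by external stability of $B$), so $a\notin\teq((B\cap D)\cup\{a\})$ as $a\notin B$, and we are done. The case $b\notin\teq(B\cup\{a\})$ is the main obstacle and is where essentially all the work lies: here one again uses that $C:=\teq(B\cup\{a\})$ is $\teq$-retentive in $B\cup\{a\}$ with $C\subseteq B\setminus\{b\}$, notes that some element of $C$ must dominate $b$ (otherwise $b$ is the Condorcet winner of $C\cup\{b\}$ while \wsp forces $\teq(C\cup\{b\})\subseteq\teq(B\cup\{a\})=C$, a contradiction), and must then push the downward-restriction and minimality arguments \emph{inside} the subtournament $\dom_{B\cup\{a\}}(b)$ --- possibly via a secondary induction on $|B|$ for fixed $|A|$, or by choosing $b$ so that $\dom(b)$ is inclusion-minimal among such dominator sets --- to conclude $a\notin\teq(\dom_{B\cup\{a\}}(b))$. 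Apart from this case, the argument is routine bookkeeping with \wsp, retentiveness, and the directedness of $\mathcal{R}_{\teq}$.
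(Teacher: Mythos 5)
Your inductive skeleton is exactly the paper's: fix $b\in B$, show that $C=\dom_B(b)$ is a $\teq$-stable set of the subtournament on $\dom(b)$, invoke the induction hypothesis to conclude that $C$ is $\teq$-retentive there, and use directedness to get $\teq(\dom(b))\subseteq C\subseteq B$. Your handling of the case $b\in\teq(B\cup\{a\})$ is correct (retentiveness of $\teq(B\cup\{a\})$ applied at $b$ gives $\teq(\dom_{B\cup\{a\}}(b))\subseteq \teq(B\cup\{a\})\subseteq B$, hence $a\notin\teq(C\cup\{a\})$), and your separate check that $C\ne\emptyset$ addresses a legitimate point that the paper folds into the same one-line assertion (if $C=\emptyset$ and $\dom_{A\setminus B}(b)\ne\emptyset$, the claimed inclusion $\teq(C\cup\{a_i\})\subseteq C$ is false on its face, so it implicitly contains non-emptiness).

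However, the proposal does not prove the lemma. The case $b\notin\teq(B\cup\{a\})$, which you yourself describe as ``the main obstacle \dots where essentially all the work lies,'' is left with two speculative strategies (a secondary induction on $|B|$, or choosing $b$ with inclusion-minimal dominator set), neither of which is carried out or shown to be workable. The difficulty is real: external stability of $B$ tells you that $a$ lies outside the top cycle of the relation $\teqrel$ on $B\cup\{a\}$, whereas what is needed is that $a$ does not $\teqrel$-dominate $b$, i.e., $a\notin\teq(\dom_{B\cup\{a\}}(b))$; when $b\notin\teq(B\cup\{a\})$ the first statement does not formally entail the second, and bridging them is precisely the content of the step. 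As written, your argument establishes the retentiveness condition only for those $b$ that happen to lie in $\teq(B\cup\{a\})$ for every relevant $a$. For comparison, the paper's own proof does not case-split at all: it dispatches the entire claim ``$\teq(C\cup\{a_i\})\subseteq C$'' with the phrase ``by definition of $\teq$.'' So you have correctly isolated the one non-routine point of the argument, but isolating it is not the same as closing it, and the proposal therefore has a genuine gap exactly where the proof needs to do its work.
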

\begin{proof}
We prove the statement by induction on the tournament order $n$. We may assume that $\mathcal{R}_{\teq}$ is directed for all tournaments of order $n+1$ or less and that $\mathcal{S}_{\teq}\subseteq \mathcal{R}_{\teq}$ for all tournaments of order $n$ or less. 
Let $T=(A,\succ)$ be a tournament of order $n+1$ and $B\subseteq A$ a $\teq$-stable set in $T$. In other words, if we let $A\setminus B=\{a_1,\dots,a_k\}$, then $\teq(B\cup\{a_i\})\subseteq B$ for all $1\le i \le k$. Now, in order to show that $B$ is $\teq$-retentive, consider an arbitrary $b\in B$ and let $C=\dom_B(b)$. By definition of $\teq$, $\teq(C\cup \{a_i\})\subseteq C$ for all $a_i\in \dom_{A\setminus B}(b)$, \ie $C$ is $\teq$-stable in $\dom(b)$. Since $\mathcal{S}_{\teq}(\dom(b))\subseteq \mathcal{R}_{\teq}(\dom(b))$, $C$ is also $\teq$-retentive in $\dom(b)$ and, due to the directedness of $\mathcal{R}_{\teq}(\dom(b))$, $\teq(\dom(b))\subseteq C$. Hence, $B$ is $\teq$-retentive in $T$.
\end{proof}

We have now cleared the ground for the main result of this section.

\begin{theorem}\label{thm:teqstable}
$\mathcal{S}_{\teq}$ is directed if and only if $\mathcal{R}_{\teq}$ is directed.  $\teq=\ms{\teq}$ if $\mathcal{R}_{\teq}$ is directed.
\end{theorem}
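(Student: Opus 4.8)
The plan is to prove the two assertions by a shared induction on the tournament order, leaning on the two lemmas just established (\lemref{lem:retstable} and \lemref{lem:stableret2}) together with the fact, recorded in \secref{sec:teq}, that $\teq$ satisfies \wsp precisely when $\mathcal{R}_{\teq}$ is directed (the \citet{LLL93a}/\citet{Houy09a} result). The key observation is that the two classes $\mathcal{S}_{\teq}(T)$ and $\mathcal{R}_{\teq}(T)$ actually coincide once $\mathcal{R}_{\teq}$ is directed on all smaller tournaments: \lemref{lem:stableret2} gives $\mathcal{S}_{\teq}\subseteq\mathcal{R}_{\teq}$, and \lemref{lem:retstable} applied with $S=\teq$ gives the reverse inclusion $\mathcal{R}_{\teq}(T)\subseteq\mathcal{S}_{\mr{\teq}}(T)=\mathcal{S}_{\teq}(T)$, using $\teq=\mr{\teq}$ by definition. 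So the main work is just to set up the induction so that the hypotheses of those lemmas are available.

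First I would show the easy direction of the equivalence: if $\mathcal{S}_{\teq}$ is directed then $\mathcal{R}_{\teq}$ is directed. For this, suppose $B_1,B_2$ are two disjoint $\teq$-retentive sets in some tournament $T$ of minimal order among counterexamples. By the induction hypothesis $\mathcal{R}_{\teq}$ is directed on all strictly smaller tournaments, hence in particular $\teq$ satisfies \wsp for such tournaments, so \lemref{lem:retstable} (with $S=\teq$) applies to each of $B_1,B_2$ inside $T$, making them disjoint $\mr{\teq}=\teq$-stable sets in $T$ — contradicting directedness of $\mathcal{S}_{\teq}(T)$. For the converse direction, assume $\mathcal{R}_{\teq}$ is directed, and show $\mathcal{S}_{\teq}$ is directed by the same minimal-counterexample/induction scheme: on smaller tournaments directedness of $\mathcal{R}_{\teq}$ plus \lemref{lem:stableret2} and \lemref{lem:retstable} give $\mathcal{S}_{\teq}=\mathcal{R}_{\teq}$, so in $T$ two disjoint $\teq$-stable sets would (via \lemref{lem:stableret2}, whose hypothesis is met because $\mathcal{R}_{\teq}$ is directed on smaller orders) be disjoint $\teq$-retentive sets, contradicting directedness of $\mathcal{R}_{\teq}(T)$.

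For the second assertion, assume $\mathcal{R}_{\teq}$ is directed; then by the first part $\mathcal{S}_{\teq}$ is directed too, so $\ms{\teq}$ is a well-defined tournament solution returning the unique minimal $\teq$-stable set. By \lemref{lem:stableret2}, $\teq(T)=\mr{\teq}(T)$ is $\teq$-retentive; conversely, since $\teq$ now satisfies \wsp on every tournament order (because $\mathcal{R}_{\teq}$ is directed everywhere), \lemref{lem:retstable} shows every $\teq$-retentive set — in particular $\teq(T)$ itself — is $\ms{\teq}=\teq$-stable. Combining, $\teq(T)$ is $\teq$-stable, so it contains the unique minimal $\teq$-stable set $\ms{\teq}(T)$; and $\ms{\teq}(T)$, being $\teq$-stable, is by \lemref{lem:stableret2} also $\teq$-retentive, hence contains the unique minimal $\teq$-retentive set $\mr{\teq}(T)=\teq(T)$. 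Thus $\teq(T)=\ms{\teq}(T)$.

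The main obstacle is bookkeeping the induction carefully: \lemref{lem:retstable} is stated as "$\mathcal{R}_S(T)\subseteq\mathcal{S}_{\mr{S}}(T)$ for tournaments of order $n+1$ when $S$ satisfies \wsp up to order $n$," and \lemref{lem:stableret2} similarly only assumes directedness of $\mathcal{R}_{\teq}$ up to the relevant order, so I must make sure that at each stage the induction hypothesis supplies exactly "$\mathcal{R}_{\teq}$ directed on smaller tournaments $\Rightarrow$ $\teq$ has \wsp on smaller tournaments" (this is the only external input needed, and it is the \citet{LLL93a}/\citet{Houy09a} fact). Everything else is a routine non-empty-intersection-of-retentive-sets argument plus the two lemmas; no delicate combinatorics is required here, unlike in \lemref{lem:pairwise}.
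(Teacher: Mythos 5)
Your overall strategy is the paper's: a simultaneous induction on the tournament order that uses \lemref{lem:retstable} and \lemref{lem:stableret2} to identify $\mathcal{R}_{\teq}$ with $\mathcal{S}_{\teq}$ under the directedness hypothesis, from which both the equivalence and $\teq=\ms{\teq}$ fall out. The one substantive divergence is how you obtain the hypothesis of \lemref{lem:retstable}, namely that $\teq$ satisfies \wsp on all strictly smaller tournaments. You import this from \citet{LLL93a} and \citet{Houy09a}, but the version of their result quoted in \secref{sec:teqsection} is the \emph{global} biconditional ($\teq$ satisfies the basic properties if and only if $\mathcal{R}_{\teq}$ is directed on \emph{all} tournaments), whereas your induction---in particular the direction where $\mathcal{R}_{\teq}$ is only known to be directed below the order of a putative counterexample---needs the graded implication ``$\mathcal{R}_{\teq}$ directed up to order $n$ implies $\teq$ satisfies \wsp up to order $n$.'' The paper does not borrow this: it makes the graded implication part of the induction itself (its implication \emph{(i)}) and proves it in a few lines---for $\teq(T)\subseteq B\subseteq A$, the induction hypothesis gives \wsp of $\teq$ on the dominator sets $\dom(a)$, hence $\teq(T)$ is $\teq$-retentive in $B$, and directedness of $\mathcal{R}_{\teq}(B)$ yields $\teq(B)\subseteq\teq(T)$. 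You should either include this short argument or verify that the cited works establish the order-by-order version. A second, smaller point: deriving directedness of $\mathcal{S}_{\teq}$ from the mere absence of two disjoint $\teq$-stable sets is not automatic (unlike for $\mathcal{R}_{\teq}$, which is closed under non-empty intersection), so for that direction you really do need the full identity $\mathcal{S}_{\teq}=\mathcal{R}_{\teq}$---which your argument does supply, and which is exactly how the paper concludes.
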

\begin{proof}
We first prove the following two implications by induction on the tournament order $n$: 
\emph{(i)} if $\mathcal{R}_{\teq}$ is directed, then $\teq$ satisfies $\wsp$ and $\mathcal{R}_{\teq}\subseteq \mathcal{S}_{\teq}$;
\emph{(ii)} if $\mathcal{S}_{\teq}$ is directed, then $\mathcal{R}_{\teq}$ is directed. 

The basis is straightforward. Assume that both implications hold for tournaments of order $n$ and let $T=(A,\succ)$ be a tournament of order $n+1$. 
In order to prove the first statement, assume that $\mathcal{R}_{\teq}$ is directed for all tournaments of order $n+1$ or less and let $B\subseteq A$ such that $\teq(T)\subseteq B$. The induction hypothesis implies that $\teq$ satisfies $\wsp$ in all dominator sets $\dom(a)$ for $a\in A$. Hence, $\teq(T)$ is $\teq$-retentive in $B$ and, due to the directedness of $\mathcal{R}_{\teq}(T)$, $\teq(B)\subseteq \teq(T)$. Moreover, \lemref{lem:retstable} shows that $\mathcal{R}_{\teq}(T)\subseteq \mathcal{S}_{\teq}(T)$ since $\teq=\mr{\teq}$ by definition.

For the second statement, assume that $\mathcal{S}_{\teq}$ is directed for tournaments of order $n+1$ or less. It follows from the induction hypothesis that $\mathcal{R}_{\teq}$ is directed for tournaments of order $n$ or less and from the induction hypothesis of the first statement that $\teq$ satisfies $\wsp$ in these tournaments. Now, assume for contradiction that there two disjoint $\teq$-retentive sets in $T$. According to \lemref{lem:retstable}, these sets are also $\teq$-stable, which contradicts the directedness of $\mathcal{S}_{\teq}(T)$.

Finally, assume that $\mathcal{R}_{\teq}$ is directed. The first statement and \lemref{lem:stableret2} establish that $\mathcal{R}_{\teq}=\mathcal{S}_{\teq}$  and hence that $\teq=\ms{\teq}$. As a consequence, $\mathcal{S}_{\teq}$ has to be directed as well, which concludes the proof.
\end{proof}

Combining the previous theorem and the definition of $\teq$, Conjecture~\ref{con:teq} entails that 
\[\teq=\mr{\teq}=\ms{\teq}\text{.}\]

%

\begin{savequote}
\sffamily
Unser Verfahren kommt darauf hinaus, da\ss{} die relativen Spielst\"arken als Wahrscheinlichkeiten aufgefa\ss{}t und so bestimmt werden, da\ss{} die Wahrscheinlichkeit f\"ur das Eintreten des beobachteten Turnier-Ergebnisses eine m\"oglichst gro\ss{}e wird.
\qauthor{E. Zermelo, 1929}
 \end{savequote}

\section{Quantitative Concepts}\label{sec:quant}

In~\charef{sec:maxsubsets}, several solution concepts were defined by collecting the maximal elements of inclusion-maximal qualified subsets. In this \extra{chapter}\extrapaper{section}, we replace maximality with respect to set inclusion by maximality with respect to \emph{cardinality}, \ie we look at qualified subsets containing the largest number of elements. 

\subsection{Maximal Qualified Subsets}

For every set of finite sets $\mathcal{S}$, define $\max_\le (\mathcal{S})=\{S \in \mathcal{S} \mid |S|\ge |S'| \text{ for all } S'\in \mathcal{S} \}$. In analogy to Definition~\ref{def:maxqualified}, we can now define a solution concept that yields the maximal elements of the largest qualified subsets.
\begin{definition}\label{def:maxqualifiedcardinality}
Let $\mathcal{Q}$ be a concept of qualified subsets. Then, the tournament solution $S_\mathcal{Q}^\#$ is defined as
\[
S_\mathcal{Q}^\#(T) = \{ \max_\prec(B) \mid B\in \max_\le (\mathcal{Q}(T))\}\text{.}
\]
\end{definition}

Obviously, $S_\mathcal{Q}^\# \subseteq S_\mathcal{Q}$ for all concepts of qualified subsets $\mathcal{Q}$. For the concept of qualified subsets $\mathcal{M}$, \ie the set of subsets that admit a maximal element, we obtain the Copeland set. 

\paragraph{Copeland set.} $S_\mathcal{M}^\#(T)$ returns the \emph{Copeland set} $\co(T)$ of a tournament $T$, \ie the set of all alternatives whose dominion is of maximal size \citep{Cope51a}.\footnote{This set is usually attributed to Copeland despite the fact that \citet{Zerm29a} and Llull (as early as 1283, see \citealt{HaPu01a}) have suggested equivalent concepts much earlier.}
\bigskip

\subsection{Minimal Stable Sets}

When the Copeland set is taken as the basis for stable sets, some tournaments contain more than one inclusion-minimal externally stable set and, even worse, do not admit a set that satisfies both internal \emph{and} external stability (see \figref{fig:costability} for an example).

\begin{figure}[htb]
  \centering\figuresize
  \def\nd{0.66em} 
  \def\ra{0.8em} 
  \begin{tikzpicture}[node distance=\nd,vertex/.style={circle,draw,minimum size=2*\ra,inner sep=0pt}]
	\draw
    (0,0) node[circle,fill=black!15,draw=black!15,minimum size=3*\nd+4*\ra] (c) {} 
    +(90:\nd+\ra) node[vertex] (a1) {$a_1$} 
    +(330:\nd+\ra) node[vertex] (a2)  {$a_2$}
    +(210:\nd+\ra) node[vertex] (a3)  {$a_3$} ++(270:4*\ra)
    +(330:4*\ra) node[vertex] (a4) {$a_4$}
    +(210:4*\ra) node[vertex] (a5) {$a_5$};

	\draw [-latex] (a1) to (a2);
	\draw [-latex] (a2) to (a3);
	\draw [-latex] (a3) to (a1);
	\draw [-latex] (c) to (a4);
	\draw [-latex] (a4) to (a5);
	\draw [-latex] (a5) to (c);
 \end{tikzpicture}
 \caption{Example tournament $T=(A,{\succ})$ that does not contain an internally and externally $\co$-stable set. There are eight externally $\co$-stable sets (\eg $A$, $\{a_2,a_3,a_4,a_5\}$, or $\{a_1,a_2,a_5\}$), none of which is internally stable.}
 \label{fig:costability}
\end{figure}

However, as it turns out, every tournament is the \emph{summary} of some tournament consisting only of homogeneous components that admits a unique internally and externally $\co$-stable set. For example, when replacing $a_4$ and $a_5$ in the tournament given in \figref{fig:costability} with 3-cycle components, the set of all alternatives is internally and externally stable.
The following definition captures this strengthened notion of stability.

\begin{definition}\label{def:scstable}
Let $S$ be a tournament solution and $\tilde{T}=(\{1,\dots,k\},\tilde{\succ})$ a tournament.  Then, $\tilde{B}\subseteq \{1,\dots,k\}$ is \emph{strongly stable} with respect to tournament solution $S$ (or \emph{strongly $S$-stable}) if there exist homogeneous tournaments $T_1,\dots,T_k$ on $k$ disjoint sets $B_1,\dots,B_k\subseteq X$ such that
$B=\bigcup_{i\in \tilde{B}} B_i$ is internally and externally $S$-stable in $T=\Pi(\tilde{T},T_1,\dots,T_k)$.
The set of strongly $S$-stable sets for a given tournament $T=(A,{\succ})$ will be denoted by $\tilde{\mathcal{S}}_S(T)=\{ B\subseteq A\mid B \text{ is strongly $S$-stable in $T$} \}$.
\end{definition}
Now, in analogy to $\ms{S}$, we can define a tournament solution that yields the minimal strongly $S$-stable set with respect to some underlying tournament solution $S$.
\begin{definition}
Let $S$ be a tournament solution. Then, the tournament solution $\mcs{S}$ is defined as
\[
\mcs{S}(T) = \bigcup \min_\subseteq(\tilde{\mathcal{S}}_S(T))\text{.}
\]
\end{definition}

The following result follows from observations  made independently by \citet{LLL93b} and \citet{FiRy95a} \citep[see also][]{Lasl97a,Lasl00a}.

\begin{theorem}[\citealp{LLL93b}]
$|\tilde{\mathcal{S}}_{S_\mathcal{M}^\#}(T)|=1$ for all tournaments $T$.
\end{theorem}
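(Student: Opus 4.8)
The plan is to identify $S_\mathcal{M}^\#$ with the Copeland solution $\co$ and to show that its unique strongly stable set is the \emph{bipartisan set} $\bp(T)$, i.e.\ the support of the unique optimal mixed strategy of the symmetric zero-sum game whose payoff matrix $M=M(T)$ is the skew-symmetric $\{-1,0,1\}$-matrix with $M_{ij}=1$ iff $i\succ j$. The substantive input --- which I would simply cite --- is the theorem of \citet{LLL93b} (rediscovered by \citet{FiRy95a}) that this game has a \emph{unique} optimal strategy $p^\ast$; everything else is bookkeeping translating stability of component blow-ups into the minimax conditions for $M$.

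The key computation is the behaviour of Copeland scores under the product construction. First I would show that for $T^\ast=\Pi(\tilde T,T_1,\dots,T_k)$ with each $T_i$ homogeneous on a set $B_i$ of size $l_i$ --- using that a homogeneous tournament has odd order (so every $l_i$ is odd) and is regular with out-degree $(l_i-1)/2$, and that a vertex of $B_i$ dominates all of $B_{i'}$ whenever $i\,\tilde\succ\,i'$ --- the Copeland score in $T^\ast$ of \emph{every} vertex of $B_i$ equals $\tfrac{L-1}{2}-\tfrac{L}{2}(q^{\mathsf T}M)_i$, where $L=\sum_i l_i$ and $q_i=l_i/L$; hence $\co(T^\ast)$ is the union of the components minimizing $(q^{\mathsf T}M)_i$. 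Applying this to $B^\ast=\bigcup_{i\in\tilde B}B_i$: internal stability $\co(B^\ast)=B^\ast$ forces $(q'^{\mathsf T}M|_{\tilde B})_i$ to be constant in $i\in\tilde B$, hence (pairing with $q'$ and using skew-symmetry) identically zero, so $q'$ is a fully supported optimal strategy of $M|_{\tilde B}$. Adding a single outside alternative $a\in B_j$, a short count gives that its Copeland score in $B^\ast\cup\{a\}$ equals $\tfrac{L'}{2}\bigl(1-(q'^{\mathsf T}M)_j\bigr)$, while the best $B^\ast$-score there is $\tfrac{L'+1}{2}$ once one checks $j$ cannot dominate all of $\tilde B$ (which would give $(q'^{\mathsf T}M)_j=-1$); since $L'(q'^{\mathsf T}M)_j$ is an integer, external stability is thus equivalent to $(q'^{\mathsf T}M)_j\ge 0$ for all $j\notin\tilde B$. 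Combining, $\tilde B$ is strongly $\co$-stable \emph{iff} the renormalized size vector on $\tilde B$, extended by zero, is an optimal strategy of $M$ with support exactly $\tilde B$; by the uniqueness theorem the only such strategy is $p^\ast$, so $\tilde B=\bp(T)$ is the sole candidate.

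It remains to exhibit \emph{one} admissible blow-up realizing $\bp(T)$: write $p^\ast_i=m_i/N$ with all $m_i$ \emph{odd} (so a homogeneous tournament of order $m_i$ exists --- a rotational tournament on $\mathbb Z_{m_i}$, or a single vertex if $m_i=1$) and verify internal and external stability directly from $(p^{\ast\mathsf T}M)_i=0$ on $\bp(T)$ and $(p^{\ast\mathsf T}M)_j\ge 0$ off it, using the score formula above. The parity claim is where I expect the only real work. Since $p^\ast$ has support exactly $B=\bp(T)$, we have $(p^\ast|_B)^{\mathsf T}M|_B=0$; the subdeterminants obtained by deleting row and column $i$ from $M|_B$ are Pfaffians $\mathrm{Pf}_i$ of skew-symmetric matrices with $\pm1$ off-diagonal, hence sums of an odd number $((|B|-2)!!)$ of $\pm1$ monomials, hence odd and in particular nonzero; this forces $\operatorname{rank}(M|_B)=|B|-1$ and identifies $p^\ast|_B$, up to scaling and sign, with the kernel vector $(\mathrm{Pf}_i)_{i\in B}$, so each $m_i$ (after clearing the necessarily odd common factor) is odd. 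The same Pfaffian-parity argument shows $|B|$ is odd: were it even, $\operatorname{Pf}(M|_B)$ would be odd, hence $M|_B$ nonsingular, contradicting $0\ne p^\ast|_B\in\ker(M|_B)$; therefore $N=\sum_{i\in B}m_i$ is odd too. With this, the blow-up is legitimate and the argument closes.

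Conceptually the crux is the uniqueness of $p^\ast$, the \citet{LLL93b}/\citet{FiRy95a} theorem, but since it may be cited the delicate points within the proof are the parity/realizability step just sketched and keeping the ``internal $+$ external stability $\Leftrightarrow$ support of an optimal strategy'' equivalence tight --- in particular that the integrality of $L'(q'^{\mathsf T}M)_j$ is exactly what upgrades the strict inequality coming from external stability to the closed condition $(q'^{\mathsf T}M)_j\ge 0$.
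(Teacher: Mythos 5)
Your proposal is correct, but note that the paper itself does not prove this theorem: it is stated as following from observations of \citet{LLL93b} and \citet{FiRy95a} and is simply cited, so there is no in-paper argument to compare against. What you have written is a sound reconstruction of the intended derivation. The reduction of strong $\co$-stability to the minimax conditions is right: your score formula for blow-ups by homogeneous (hence regular, odd-order) components is correct; internal stability does force $(q'^{\mathsf T}M)_i$ to be constant, hence zero, on $\tilde B$; and the integrality of $L'(q'^{\mathsf T}M)_j$ is exactly what turns the strict external-stability inequality into the closed condition $(q'^{\mathsf T}M)_j\ge 0$. Together these say that the normalized size vector, extended by zero, is an optimal strategy of the tournament game with support $\tilde B$, and uniqueness of that strategy pins $\tilde B$ down to $\bp(T)$. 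The converse (realizability) also goes through: the Pfaffian-parity argument correctly yields that $|\bp(T)|$ is odd and that the equilibrium probabilities have odd numerators over an odd denominator, so rotational tournaments of those orders (and singletons outside the support) give a legitimate witness; these parity facts are themselves part of the analysis in \citet{LLL93b}, so you are re-proving rather than citing them, which is harmless. The one bookkeeping point worth flagging is that external stability must be measured against the maximum score \emph{inside} $B^\ast\cup\{a\}$, which equals $(L'+1)/2$ only because some component of $\tilde B$ beats $j$; you correctly dispose of the alternative ($j$ dominating all of $\tilde B$) via $(q'^{\mathsf T}M)_j=-1$, so the argument is tight.
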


The unique strongly stable set with respect to the Copeland set is known as the \emph{bipartisan set}.

\paragraph{Bipartisan set.} The \emph{bipartisan set} of a tournament $T$ is given by $\bp(T)=\mcs{S}^\#_\mathcal{M}(T)$. It was originally defined as the set of alternatives corresponding to the support of the unique Nash equilibrium of the tournament game \citep{LLL93b}. The tournament game of a tournament is the two-player zero-sum game given by its adjacency matrix\extra{ (see \secref{sec:adversarial})}.
\bigskip

$\bp$ satisfies all basic properties, composition-consistency, and is contained in $\mc$. Its relationship with $\ba$ is unknown \citep{LLL93b}.

\begin{table}[tb]
\centering\figuresize
\begin{tabular}{ll}
\toprule
$S$ & $\mcs{S}$ \\
\midrule
Condorcet non-losers ($\cnl$) & Top cycle ($\tc$)\\
Copeland set ($\co$) & Bipartisan set ($\bp$)\\
Uncovered set ($\uc$) & Minimal covering set ($\mc$)\\
Banks set ($\ba$) & Minimal extending set ($\me$)\\
Tournament equilibrium set ($\teq$) & Tournament equilibrium set ($\teq$)\\
\bottomrule
\end{tabular}
\caption{Tournament solutions and their minimal strongly stable counterparts. The representation of $\teq$ as a stable set relies on Conjecture~\ref{con:teq}.}
\label{tbl:stronglystable}
\end{table}

Interestingly, minimality is not required for $\mcs{S}^\#_\mathcal{M}$, because there is always exactly one strongly $S^\#_\mathcal{M}$-stable set.
Further observe that internally and externally stable sets and strongly stable sets coincide when the underlying solution concept satisfies \com. This is the case for $\mc$, $\me$, and $\teq$. Even though, $\tc$ does not satisfy \com, it is easily verified that replacing alternatives with homogeneous components does not affect the top cycle of a tournament. It is thus possible to define all mentioned concepts using minimal strongly stable sets instead of stable sets (see Table~\ref{tbl:stronglystable}).

\extra{

\begin{savequote}
\sffamily
My own viewpoint is that, inter alia, a solution concept 
must be calculable, otherwise you are not going to use it.
\qauthor{R.~J.~Aumann, 1998}
\end{savequote}

\section{Computational Aspects}\label{sec:computation}

The effort required to determine the choice set of a given tournament is obviously an important property of any tournament solution.  If computing a choice set is infeasible, the applicability of the corresponding solution concept is seriously undermined. In order to make formal statements about the computational aspects of tournament solutions, we will rely on the well-established framework of \emph{computational complexity theory}~\citep[see, \eg][]{Papa94a}.  Complexity theory deals with \emph{complexity classes} of problems that are computationally equivalent in a certain well-defined way.  Typically, problems that can be solved by an algorithm whose running time is polynomial in the size of the problem instance are considered \emph{tractable}, whereas problems that do not admit such an algorithm are deemed \emph{intractable}. Formally, an algorithm is \emph{polynomial} if there exists~$k\in \mathbb{N}$ such that the running time is in~$O(n^k)$ where~$n$ is the size of the input. When $k=1$, the running time is \emph{linear}. 

For convenience, problems are usually phrased as \emph{decision problems}, \ie problems whose answer is either ``yes'' or ``no.''
The class of decision problems that can be solved in polynomial time is denoted by P, whereas NP (for ``nondeterministic polynomial time'') refers to the class of decision problems whose solutions can be \emph{verified} in polynomial time. The famous P$\ne$NP conjecture states that the hardest problems in NP do not admit polynomial-time algorithms and are thus not contained in P. Although this statement remains unproven, it is widely believed to be true.
\emph{Hardness} of a problem for a particular class intuitively means that the problem is no easier than any other problem in that class.  Both membership and hardness are established in terms of \emph{reductions} that transform instances of one problem into instances of another problem using computational means appropriate for the complexity class under consideration.  Most reductions in this chapter rely on reductions that can be computed in time polynomial in the size of the problem instances.  Finally, a problem is said to be \emph{complete} for a complexity class if it is both contained in and hard for that class.  

Given the current state of complexity theory, we cannot prove the \emph{actual} intractability of most algorithmic problems, but merely give \emph{evidence} for their intractability.  Showing NP-hardness of a problem is commonly regarded as very strong evidence for computational intractability because it relates the problem to a large class of problems for which no efficient, \ie polynomial-time, algorithm is known, despite enormous efforts to find such algorithms. 

The definition of any computable tournament solution induces a straightforward algorithm, which exhaustively enumerates all subsets of alternatives and checks which of them comply with the conditions stated in the definition. Not surprisingly, such an algorithm is very inefficient. Yet, proving the intractability of a solution concept essentially means that \emph{any} algorithm that implements this concept is asymptotically as bad as the straightforward algorithm! The following decision problem will be of central interest in this chapter:
\begin{quote}
Given a tournament solution $S$, a tournament $T=(A,{\succ})$, and an alternative $a\in A$, is $a$ contained in~$S(A)$?
\end{quote}
Deciding whether an alternative is contained in a choice set is  computationally equivalent (via so-called first-order reductions) to actually finding the set. As a consequence, we can restrict our attention to the above mentioned membership decision problem without loss of generality.

Besides P and NP, we will consider the complexity classes \aczero and \tczero. \aczero is the class of problems solvable by uniform constant-depth Boolean circuits with unbounded fan-in and a polynomial number of gates. \tczero is defined as \aczero, but additionally allows majority gates which output true if and only if the number of true inputs exceeds the number of false inputs.  Uniformity means that there is an algorithm that requires only logarithmic space for constructing the circuit~$C_n$ for each input length~$n$.  
\aczero, \tczero, P, and NP are related according to the following inclusion chain:
\[
\text{\aczero}\subset\text{\tczero}\subseteq\text{P}\subseteq\text{NP}\text{.}
\]
The inclusion relationships between \tczero and P and between P and NP are believed to be strict. 

Finite model theory has revealed a strong relationship between complexity theory and logic in a finite, ordered universe \citep[see, \eg][]{Imme98a,Libk04a}. This underlines the robustness of common complexity classes as their logical characterizations do not invoke machine models or notions such as time and space, let alone ``polynomial'' and ``exponential.'' NP, for instance, is the class of predicates definable in existential second-order logic. We will make no appeal to these logical characterizations except when showing membership in \aczero as this class simply corresponds to first-order logic.\footnote{More precisely, uniform \aczero corresponds to first-order logic with a so-called BIT predicate. Our results, however, do not require the BIT predicate.}
For a binary operator $\succ$ representing the dominance relation, a first-order theory of tournaments is given by the axioms 
\[\forall x\, \forall y\, (\neg x=y \leftrightarrow x\succ y \vee y\succ x)
\quad\text{and}\quad
\forall x\, \forall y (x\succ y \rightarrow \neg y\succ x)\text{.}\]
The membership decision problem for tournament solution $S$ is in \aczero if there is a first-order expression that is true for a given set $A$ and a binary relation $\succ$ if and only if $(A,\succ)$ is a tournament and $a\in S(A)$. In order to simplify first-order expressions, we furthermore let $x\succeq^0 y\leftrightarrow x=y$ and $x\succeq^{k+1}y \leftrightarrow x\succeq^k y \vee \exists z\, (x\succeq^k z\wedge z\succ y)$.

In the remainder of this chapter, we will analyze the computational complexity of the most common tournament solutions of our framework, namely the Copeland set, the uncovered set, the Banks set, the top cycle, the bipartisan set, the minimal covering set, the minimal extending set, and the tournament equilibrium set.

\subsection{Copeland Set}

The Copeland set consists of all alternatives with maximal degree, \ie all alternatives whose dominion is of maximal size. Not surprisingly, this set can be easily computed in linear time (see Algorithm~\ref{alg:co}).

\begin{algorithm}[htb]
\begin{algorithmic}
	\STATE \hspace*{-1em} \textbf{procedure} $\co(A,{\succ})$
    \STATE $B\leftarrow \{ a\in A\mid \forall b\in A\colon |D(a)|\ge |D(b)\}$
	\RETURN $B$
\end{algorithmic}
\caption{Copeland set}
\label{alg:co}
\end{algorithm}

However, the membership decision problem for the Copeland set is not in \aczero and therefore not expressible in first-order logic.

\begin{theorem}[\citealp{BFH09b}]\label{thm:co_complexity}
	Deciding whether an alternative is contained in the Copeland set of a tournament is \tczero-complete under first-order reductions.
\end{theorem}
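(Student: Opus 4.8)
The theorem asks to show $\tczero$-completeness, under first-order reductions, of deciding membership in the Copeland set. The plan is to establish the two directions separately: membership in $\tczero$ and $\tczero$-hardness.

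For membership in $\tczero$, recall that $a \in \co(A,{\succ})$ iff $|D(a)| \ge |D(b)|$ for all $b \in A$. The key observation is that counting the out-degree of a vertex and comparing two such counts is exactly the kind of task majority/threshold gates perform. More precisely, I would express the predicate ``$|D(a)| \ge |D(b)|$'' as a threshold condition on the bits $x \succ y$, which is first-order definable with a majority quantifier (equivalently, computable by a constant-depth threshold circuit): for each pair $(a,b)$ one needs to compare $|\{c : a \succ c\}|$ with $|\{c : b \succ c\}|$, and the universal quantification over $b$ keeps us inside $\tczero$ since $\tczero$ is closed under first-order operations applied to threshold-definable predicates. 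I would cite the standard descriptive-complexity characterization of $\tczero$ (first-order logic with majority quantifiers over an ordered universe, as mentioned in the preliminaries on $\tczero$) to make this rigorous.

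For $\tczero$-hardness, I would reduce from a canonical $\tczero$-complete problem. The natural candidate is \textsc{Majority} (or the equivalent \textsc{Comparison}/\textsc{Exact-Count} problem): given a bit string, decide whether at least half the bits are $1$; this is $\tczero$-complete under first-order reductions (one can also use the problem of comparing the number of $1$s in two strings). Given an input string $w = w_1 \cdots w_n$, I would construct in first-order (quantifier-free) fashion a tournament on roughly $2n + O(1)$ vertices containing a designated alternative $a$ and a ``reference'' alternative $b$, where the out-degree of $a$ is a fixed affine function of the number of $1$s in $w$ and the out-degree of $b$ is a fixed constant (or a function of the number of $0$s), arranged so that $a$ beats $b$ in the Copeland comparison precisely when the number of $1$s meets the threshold. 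Concretely, one block of $n$ vertices $c_1, \dots, c_n$ encodes the bits: direct $a \succ c_i$ when $w_i = 1$ and $c_i \succ a$ when $w_i = 0$; add padding vertices with fixed edges to $a$ and to a comparison alternative whose degree is calibrated to the threshold, and complete all remaining edges arbitrarily in a first-order-describable way (e.g.\ by the given linear order on indices) so that no other vertex ever attains maximal out-degree. Then $a$ lies in the Copeland set iff the majority condition holds.

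The main obstacle is the hardness construction: one must simultaneously (i) make a single alternative's Copeland score track the bit-count exactly, (ii) ensure \emph{no} other alternative spuriously ties or exceeds that maximal score regardless of the input, and (iii) keep every edge orientation definable by a fixed first-order (indeed quantifier-free, with $+$, $\times$, $<$, $\mathrm{BIT}$) formula in the input, since only first-order reductions are permitted. Point (ii) is the delicate one, as it requires a careful degree budget: the padding and ``filler'' vertices must be given out-degrees provably bounded away from the target vertex's range over all inputs, which typically forces a blow-up to a constant-factor-larger vertex set and a bit of arithmetic bookkeeping. The membership direction, by contrast, is routine once the descriptive characterization of $\tczero$ is invoked. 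I would present the membership argument first as a warm-up and then devote the bulk of the proof to the reduction, isolating the degree-calibration lemma as a separate claim.
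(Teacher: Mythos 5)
First, a caveat about the comparison you asked for: the paper does not prove this statement at all---Theorem~\ref{thm:co_complexity} is imported by citation from \citet{BFH09b}, so there is no in-paper argument to measure your proposal against. That said, your overall strategy is the standard one and matches the cited source in spirit: membership follows from the characterization of uniform \tczero as first-order logic with majority quantifiers, since ``$|D(a)|\ge|D(b)|$ for all $b$'' is a polynomial-size conjunction of threshold comparisons of first-order-definable bit sets; hardness goes by a first-order reduction from \textsc{Majority} (which is \tczero-complete under such reductions) that encodes the bit count of the input in the out-degree of a designated alternative.

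One concrete detail in your hardness sketch would fail as written. You propose to complete the remaining edges ``by the given linear order on indices.'' A transitive orientation of the filler block $c_1,\dots,c_n$ gives $c_1$ out-degree $n-1$ within that block alone, which destroys exactly the degree bound you single out as the delicate point \emph{(ii)}. The deeper issue is that in a tournament on $N$ vertices the out-degrees sum to $\binom{N}{2}$, so the fillers cannot all be kept at low degree; a transitive block merely concentrates the unavoidable degree budget on one spurious winner. The fix is to orient the filler set in a \emph{near-regular}, first-order-definable way (e.g.\ a circulant rule such as $c_i\succ c_j$ iff $(j-i)\bmod n\in\{1,\dots,\lfloor (n-1)/2\rfloor\}$, definable with the arithmetic available to first-order reductions), so that every filler has out-degree within an additive constant of $(N-1)/2$, and then to calibrate the designated alternative's degree via $O(1)$ padding vertices so that it crosses this common level precisely at the majority threshold. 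With that correction your plan---membership as a warm-up, the reduction with a separate degree-calibration claim---goes through.
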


\subsection{Uncovered Set}
\label{sec:uc_complexity}

The uncovered set is usually defined via a subrelation of the dominance relation called \emph{covering relation} rather than in terms of maximal sets that admit a maximal element (\cf \secref{sec:maxmaxqualified}).

\begin{definition}\label{def:coveringrelation}
	Let~$(A,{\succ})$ be a tournament.  Then, for every $a,b\in A$, $a$ \emph{covers}~$b$, in symbols $a \cov b$, if $D(a)\supset D(b)$.
\end{definition}
Observe that $a\supset b$ implies $a\succ b$ and that $a\supset b$ if and only if $\dom(a)\subset \dom(b)$.
It is easily verified that the covering relation is a transitive subrelation of the dominance relation.  The set of maximal elements of the covering relation of a given tournament $T$ is the uncovered set $\uc(T)=\{a\in A \mid \text{$b \cov a$ for \emph{no} $b\in A$}\}$ \citep{Fish77a,Mill80a}. 

Interestingly, the uncovered set consists precisely of those alternatives that reach every other alternative on a domination path of length at most two. In graph theory, these alternatives are often called the \emph{kings} of a tournament.

\begin{theorem}[\citealp{ShWe84a}]
$\uc(T)=\{a\in A\mid D_{\succeq^2}(a)=A\}$ for all tournaments $T=(A,{\succ})$.
\end{theorem}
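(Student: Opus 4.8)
The plan is to first make the covering relation explicit in terms of dominions, and then verify both inclusions by an elementary case analysis that uses only asymmetry, completeness and irreflexivity of $\succ$, together with the inductive definition of $\succeq^k$ from the preliminaries. Recall from there that $x\succeq^2 y$ holds precisely when $x=y$, or $x\succ y$, or there is some $z$ with $x\succ z\succ y$; hence $D_{\succeq^2}(a)=A$ is exactly the statement that $a$ reaches every alternative by a dominating path of length at most two (i.e.\ that $a$ is a ``king''). I would also record the reformulation of covering: for distinct $a,b$, the relation $b\cov a$ is equivalent to ``$b\succ a$ and $D(a)\subseteq D(b)$'', which in turn is equivalent to ``$D(a)\subsetneq D(b)$''. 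This is immediate from the definition and the facts already noted in the excerpt ($a\cov b$ implies $a\succ b$, and $a\cov b$ iff $\dom(a)\subsetneq\dom(b)$): if $b\succ a$ then $a\in D(b)\setminus D(a)$ by irreflexivity, so $D(a)\subseteq D(b)$ already forces strictness; and if $a\succ b$ held alongside $D(a)\subseteq D(b)$ we would get $b\in D(b)$, a contradiction.

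For the inclusion $\{a\mid D_{\succeq^2}(a)=A\}\subseteq \uc(T)$ I would argue by contraposition. Suppose $a\notin\uc(T)$ and pick $b$ with $b\cov a$, so $b\succ a$ and $D(a)\subsetneq D(b)$. I claim $b\notin D_{\succeq^2}(a)$, which shows $D_{\succeq^2}(a)\ne A$. Indeed $b\ne a$; also $a\not\succ b$ by asymmetry; and for any $z$ with $a\succ z$ we have $z\in D(a)\subseteq D(b)$, hence $b\succ z$ and so $z\not\succ b$ by asymmetry. Thus there is no $z$ with $a\succ z\succ b$, and none of the three ways of witnessing $a\succeq^2 b$ applies.

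For the reverse inclusion $\uc(T)\subseteq\{a\mid D_{\succeq^2}(a)=A\}$, let $a\in\uc(T)$ and take an arbitrary $c\in A$ with $c\ne a$; I must show $a\succeq^2 c$. If $a\succ c$ we are done, so assume $c\succ a$. Since $a$ is uncovered, $c$ does not cover $a$; as $c\succ a$, the reformulation above forces $D(a)\not\subseteq D(c)$, so we may pick $z\in D(a)\setminus D(c)$. Then $a\succ z$; moreover $z\ne c$ (because $c\notin D(a)$, as $c\succ a$), and $z\notin D(c)$ means $c\not\succ z$, whence $z\succ c$ by completeness. Therefore $a\succ z\succ c$, giving $a\succeq^2 c$, and since $c$ was arbitrary, $D_{\succeq^2}(a)=A$.

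I do not anticipate a genuine obstacle here: the proof is short and self-contained. The only care needed is in the bookkeeping with asymmetry/completeness/irreflexivity and in correctly unwinding the inductive definition of $\succeq^k$, so that ``$D_{\succeq^2}(a)=A$'' really does mean ``$a$ is a king''; everything else is routine case analysis.
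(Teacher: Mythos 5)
Your proof is correct and follows essentially the same route as the paper's: one direction by exhibiting, for uncovered $a$ and any dominator $c$, a witness in $D(a)\setminus D(c)$ (the paper phrases this via $\dom(c)\not\subseteq\dom(a)$, which is equivalent), and the other by contraposition, showing a covering alternative is unreachable in two steps. The extra bookkeeping you include (the reformulation of covering and the unwinding of $\succeq^2$) is accurate but not a substantive departure.
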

\begin{proof}
For the direction from left to right, let $a\in \uc(T)$. Clearly, if $b\in D(a)$, then $b\in D_{\succeq^2}(a)$. Let therefore $b\in \dom(a)$. Since $a$ lies within the uncovered set, $b$ may not cover $a$, \ie $\dom(b)\not\subseteq\dom(a)$. Thus, there has to be a $c\in \dom(b)$ such that $c \not\succ a$. As a consequence, $a\succ c \succ b$ and $b\in D_{\succeq^2}(a)$.

For the converse, let $a\not\in \uc(T)$. Then, there has to be $b\in \dom(a)$ such that $b \cov a$, which implies that $b$ dominates all $c$ in $D(a)$ and consequently that $b\not\in D_{\succeq^2}(a)$. 
\end{proof}

The previous characterization can be leveraged to compute the uncovered set via matrix multiplication \citep{Land53a}. Using the fastest known algorithm for matrix multiplication \citep{CoWi90a}, the asymptotic complexity of Algorithm~\ref{alg:uc} is $O(n^{2.38})$ \citep[see, also][]{Hudr09a}.\footnote{There is some evidence that $O(n^2)$ algorithms for matrix multiplication exist \citep{CKSU05a}.}

\begin{algorithm}[htb]
\begin{algorithmic}
	\STATE \hspace*{-1em} \textbf{procedure} $\uc(A,{\succ})$
    \FORALL{$i,j\in A$}
	\STATE \textbf{if} $i\succ j \vee i=j$ \textbf{then} $m_{ij}\leftarrow 1$
	\STATE \textbf{else} $m_{ij}\leftarrow 0$ \textbf{end if}
	\ENDFOR
	\STATE $M\leftarrow (m_{ij})_{i,j\in A}$
    \STATE $U\leftarrow (u_{ij})_{i,j\in A} \leftarrow M^2+M$
    \STATE $B\leftarrow \{ i\in A\mid \forall j\in A\colon u_{ij}\ne 0\}$
	\RETURN $B$
\end{algorithmic}
\caption{Uncovered set}
\label{alg:uc}
\end{algorithm}

Since $\co\subseteq \uc$, the alternatives that dominate most alternatives dominate \emph{every} alternative on a path of length at most two. From a complexity theory point of view, deciding whether an alternative lies within $\uc(T)$ is computationally easier than checking whether it is contained in $\co(T)$, despite the fact that the fastest known algorithm for computing $\uc$ is asymptotically slower than the fastest algorithm for $\co$. Computing the uncovered sets can be highly parallelized by determining the covering relation for every pair of alternatives.

\begin{theorem}[\citealp{BrFi08b}]\label{thm:uc_complexity}
	Deciding whether an alternative is contained in the uncovered set of a tournament is in \aczero.
\end{theorem}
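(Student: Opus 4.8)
The plan is to exhibit a single first-order formula over the vocabulary $\{{\succ}\}$ which, evaluated in a structure $(A,{\succ})$, is true exactly when $(A,{\succ})$ is a tournament and the distinguished element $a$ lies in $\uc(A)$. Since (uniform) $\aczero$ coincides with first-order definability and such a formula is the same for every input length, this places the membership problem for $\uc$ in $\aczero$, and—as the formula uses only $\succ$ and equality—no appeal to the BIT predicate is needed.

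First I would invoke the characterization of \citet{ShWe84a} recalled above, namely that $a\in\uc(T)$ if and only if $D_{\succeq^2}(a)=A$, i.e.\ $a$ reaches every alternative on a dominance path of length at most two. Unfolding the abbreviation ${\succeq^2}$ introduced in this chapter, the condition $D_{\succeq^2}(a)=A$ reads
\[
\forall b\,\bigl(a=b \;\vee\; a\succ b \;\vee\; \exists c\,(a\succ c \wedge c\succ b)\bigr)\text{,}
\]
which is a first-order formula with $a$ as its only free variable (a single $\forall\exists$ quantifier prefix over the universe). I would then conjoin this with the two first-order tournament axioms given above—totality $\forall x\,\forall y\,(\neg x=y \leftrightarrow x\succ y \vee y\succ x)$ and asymmetry $\forall x\,\forall y\,(x\succ y \rightarrow \neg y\succ x)$—so that the resulting formula is true in $(A,{\succ})$ precisely when $(A,{\succ})$ is a tournament and $a\in\uc(A)$, which is exactly what is required for membership in $\aczero$.

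There is essentially no obstacle here once the king characterization is available. The only point worth verifying is that the covering relation is itself first-order: $b\cov a$ holds iff $b\succ a \wedge \forall c\,(a\succ c \rightarrow b\succ c)$, the properness of the inclusion $D(b)\supset D(a)$ being automatic since $a\in D(b)\setminus D(a)$ whenever $b\succ a$. Hence one could alternatively take $\forall b\,\neg(b\cov a)$ directly as the defining formula and bypass \citet{ShWe84a} altogether; I would note this as a remark. The only mildly subtle aspect to flag is that the definition of $\uc$ in terms of the set containments $D(a)\supset D(b)$ \emph{looks} second-order, and the content of the proof is precisely the observation that it is not.
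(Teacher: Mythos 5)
Your proposal is correct and matches the paper's own argument: the paper likewise derives membership in \aczero from the first-order expression $\uc(x)\leftrightarrow\forall y\,(x\succeq^2 y)$, whose correctness rests on the kings characterization of \citet{ShWe84a}, exactly as you unfold it. Your closing remark that the covering relation itself is directly first-order definable is a fine observation but not needed, since the paper takes the same route through $\succeq^2$ that you do.
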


The previous theorem follows straightforwardly from the fact that there is a simple first-order expression for checking membership in the uncovered set,   
\[
\uc(x) \;\leftrightarrow\; \forall y\, (x\succeq^2 y)\text{.}
\]

The theorem also implies that the iterated uncovered set $\uc^\infty$ can be computed in polynomial time.


\subsection{Banks Set}
\label{sec:ba_complexity}

In contrast to the previous two tournament solutions, the Banks set, which consists of the maximal alternatives of maximal transitive subsets, cannot be computed in polynomial time unless P equals NP.

\begin{theorem}[\citealp{Woeg03a}]\label{thm:ba_complexity}
	Deciding whether an alternative is contained in the Banks set of a tournament is NP-complete.
\end{theorem}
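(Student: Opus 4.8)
The plan is to establish the two halves of completeness separately: an easy argument that the problem lies in NP, and a polynomial-time reduction from $3$-SAT for hardness, following \citet{Woeg03a}. For membership, recall from \defref{def:maxqualified} that $\ba=S_{\mathcal{M}^*}$, so $a\in\ba(T)$ holds exactly when there is a transitive subtournament $B\subseteq A$ with $\max_\prec(B)=\{a\}$ that is inclusion-maximal in $\mathcal{M}^*(T)$. Inclusion-maximality of a transitive $B$ is equivalent to the statement that $B\cup\{c\}$ is intransitive for every $c\in A\setminus B$: a transitive superset $B'$ with $B\subseteq B'$ and $B'\ne B$ would contain the transitive set $B\cup\{c\}$ for any $c\in B'\setminus B$. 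Thus such a $B$, of size at most $|A|$, serves as a certificate, and checking that $B$ is transitive, that $a=\max_\prec(B)$, and that $B\cup\{c\}$ contains a $3$-cycle for each outside $c$ all take polynomial time. Hence the membership problem is in NP.

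For NP-hardness I would reduce from $3$-SAT. Given a formula $\varphi$ over variables $x_1,\dots,x_n$ with clauses $C_1,\dots,C_m$, I would build a tournament $T_\varphi=(A,{\succ})$ with a designated alternative $d$, two ``literal'' alternatives $x_i^{+},x_i^{-}$ per variable (together with a constant-size ``variable gadget'' of auxiliary alternatives, if needed), and one ``clause'' alternative $c_j$ per clause, arranged so that
\[
d\in\ba(T_\varphi)\quad\Longleftrightarrow\quad\varphi\text{ is satisfiable.}
\]
Here $d$ dominates every literal and every clause alternative, the literal alternatives are laid out so that choosing one literal per variable yields, together with $d$, a transitive chain $d\succ\ell_1\succ\dots\succ\ell_n$, and the literal–clause edges are oriented so that a clause alternative $c_j$ can be appended at the bottom of such a chain iff every selected literal fails to occur in $C_j$ — i.e.\ iff the assignment falsifies $C_j$. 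Consequently the chain is inclusion-maximal iff the assignment satisfies every clause, which immediately yields the direction ``$\varphi$ satisfiable $\Rightarrow d\in\ba(T_\varphi)$''.

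The converse — arguing that the existence of \emph{any} inclusion-maximal transitive subtournament topped by $d$ forces a satisfying assignment — is the step I expect to be the main obstacle. It requires choosing the edges among the literal alternatives and the variable gadgets so that three things hold at once: (a) once $x_i^{+}$ is in the chain, the complementary alternative $x_i^{-}$ can never be inserted while preserving transitivity (and symmetrically); (b) a transitive set topped by $d$ that skips variable $x_i$ altogether can always be enlarged inside the $i$-th gadget, so it cannot be maximal; and (c) no ``spurious'' inclusion-maximal transitive subtournament containing $d$ exists, for instance one that draws in a clause alternative too early. Arranging (a), (b) and (c) simultaneously without destroying transitivity of the intended chain is the delicate core of the construction; granting it, every inclusion-maximal transitive subtournament with maximum $d$ must consist of $d$ plus exactly one literal per variable, and by maximality together with the clause-edge rule it can exist only when every clause is satisfied. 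The remaining obligations — transitivity of the intended chain, the polynomial bound on $|A|$, and that the reduction is polynomial-time computable — are routine.
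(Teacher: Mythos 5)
Your NP-membership argument is fine: a single inclusion-maximal transitive subset $B$ with $\max_\prec(B)=\{a\}$ is a polynomial-size certificate, and your observation that inclusion-maximality of a transitive $B$ reduces to checking that $B\cup\{c\}$ contains a $3$-cycle for each $c\in A\setminus B$ is correct. Note, for what it is worth, that the paper itself offers no proof of this theorem --- it is imported from \citet{Woeg03a}, with a pointer to a simpler argument in \citet{BFHM09a} --- so there is no in-paper construction to compare yours against.

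The hardness half, however, is a plan rather than a proof, and the gap is exactly where you locate it: the variable and clause gadgets are never constructed, and all three of your requirements (a)--(c) are assumed rather than established. Beyond the missing construction, the one concrete rule you do state is insufficient as written. You arrange the literal--clause edges so that $c_j$ can be \emph{appended at the bottom} of the chain $d\succ\ell_1\succ\dots\succ\ell_n$ iff the assignment falsifies $C_j$; but inclusion-maximality requires that $B\cup\{c_j\}$ be intransitive outright, and since $B$ is a linear chain this fails whenever the edges between $c_j$ and the chosen literals are ``monotone'' along the chain --- in that case $c_j$ can be \emph{inserted at an interior position}, extending the chain even though it cannot be appended at the bottom. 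Ruling this out for every clause alternative and every admissible chain, while simultaneously guaranteeing your conditions (a) and (b) and excluding spurious maximal chains through the auxiliary gadget alternatives, is precisely the content of the reduction; ``granting it'' grants the theorem. As it stands the proposal establishes membership in NP and correctly identifies the shape of a reduction, but it does not prove NP-hardness.
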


An alternative, arguably simpler, proof of this statement has been given by \citet{BFHM09a}.
Interestingly, random alternatives (and thus subsets) of the Banks set can be found in linear time \citep{Hudr04a}. This is can be achieved by iteratively assembling maximal transitive sets (see Algorithm~\ref{alg:ba_element}). The difficulty of computing the Banks sets is rooted in the potentially exponential number of maximal transitive subsets contained in a tournament.

\begin{algorithm}[htb]
\begin{algorithmic}
	\STATE \hspace*{-1em} \textbf{procedure} $\mathit{BA-ELEMENT}(A,{\succ})$
    \STATE $B\leftarrow \emptyset$
    \STATE $a\in A$
    \LOOP
    \STATE $B\leftarrow B\cup \{a\}$
    \STATE $C\leftarrow \{c \mid c\succ B\}$
    \STATE \textbf{if} $C=\emptyset$ \textbf{then return} $a$ \textbf{end if}
    \STATE $a\in C$
    \ENDLOOP
\end{algorithmic}
\caption{Banks set element}
\label{alg:ba_element}
\end{algorithm}

\subsection{Top Cycle}

The top cycle or minimal dominant set of a tournament $T$ can be computed in linear time by starting with an arbitrary non-empty (linear-time computable) set of alternatives contained in $\tc(T)$, \eg $\co(T)$, and then iteratively adding alternatives that are not dominated by the current set (see Algorithm~\ref{alg:tc}).\footnote{Slightly more efficient algorithms can be obtained by relying on the fact that the degree of any alternative in the top cycle is always at least as high as the degree of any alternative outside the top cycle \citep{Moon68a}.}

\begin{algorithm}[htb]
\begin{algorithmic}
	\STATE \hspace*{-1em} \textbf{procedure} $\tc(A,{\succ})$
	\STATE $B\leftarrow C\leftarrow \co(A,{\succ})$
    \LOOP
    \STATE $C\leftarrow \bigcup_{a\in C} \dom_{A\setminus B}(a)$
    \STATE \textbf{if} $C=\emptyset$ \textbf{then return} $B$ \textbf{end if}
    \STATE $B\leftarrow B\cup C$
    \ENDLOOP
\end{algorithmic}
\caption{Top Cycle}
\label{alg:tc}
\end{algorithm}

The following lemma will prove useful for characterizing the computational complexity of the top cycle.

\begin{lemma}\label{lem:smith_bilateral}
$\tc(T)=\{a\in A\mid D_{\succeq^*}(a)=A\}$ for all tournaments $T=(A,{\succ})$.
\end{lemma}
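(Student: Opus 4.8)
The plan is to use the characterization of $\tc(T)$ as the \emph{smallest} dominant set, i.e.\ the smallest $B\subseteq A$ with $B\succ A\setminus B$ — this is well-defined since, as noted after \thmref{thm:uniquekcovering}, the dominant sets of a tournament are totally ordered by inclusion — together with the elementary observation that $a\succeq^* b$ holds exactly when $b=a$ or there is a directed $\succ$-path from $a$ to $b$. Thus $D_{\succeq^*}(a)$ is precisely the set of alternatives reachable from $a$ along directed paths, and the claim becomes: $a\in\tc(T)$ iff every alternative is reachable from $a$. (The case $|A|=1$ is trivial throughout.)

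First I would establish a small lemma: the subtournament induced on $\tc(T)$ is strongly connected. Indeed, if it were not, it would admit a proper decomposition whose ``top'' component $B'$ dominates the remaining vertices of $\tc(T)$; since $\tc(T)$ in turn dominates $A\setminus\tc(T)$, the set $B'$ would be a dominant set strictly smaller than $\tc(T)$, contradicting minimality.

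For the inclusion $\tc(T)\subseteq\{a\mid D_{\succeq^*}(a)=A\}$: given $a\in\tc(T)$, strong connectivity gives a directed path inside $\tc(T)$ from $a$ to every $b\in\tc(T)$; and every $c\in A\setminus\tc(T)$ is dominated by each vertex of $\tc(T)$, so appending one edge shows $c$ is reachable from $a$ as well. Hence $D_{\succeq^*}(a)=A$. For the converse, suppose $D_{\succeq^*}(a)=A$ but $a\notin\tc(T)$. Every directed path starting at $a$ must stay in $A\setminus\tc(T)$, because the first edge leaving $A\setminus\tc(T)$ would run from a vertex outside $\tc(T)$ to one inside, contradicting $\tc(T)\succ A\setminus\tc(T)$ and the asymmetry of $\succ$. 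Therefore $D_{\succeq^*}(a)\cap\tc(T)=\emptyset$, and since $\tc(T)\neq\emptyset$ this contradicts $D_{\succeq^*}(a)=A$.

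Everything here only unwinds definitions and uses completeness and asymmetry of $\succ$ together with the minimality of $\tc(T)$, so I do not anticipate a genuine obstacle; the mildly delicate point is the ``first edge entering $\tc(T)$'' step. If one prefers to avoid the strong-connectivity lemma, the forward inclusion can also be argued directly: for $a\in\tc(T)$ the reachable set $R=D_{\succeq^*}(a)$ is closed under out-neighbours, so completeness forces $A\setminus R\succ R$; one then checks $A\setminus\tc(T)\subseteq R$ (every vertex of $\tc(T)\cap R$, in particular $a$, dominates all of $A\setminus\tc(T)$), whence $A\setminus R=\tc(T)\setminus R$ is itself a dominant set that is a proper subset of $\tc(T)$ unless $R=A$, and minimality forces $R=A$.
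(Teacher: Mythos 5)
Your proof is correct, but the forward inclusion is argued differently from the paper. The paper's proof observes that for \emph{every} $b\in A$ the set $\dom_{\succeq^*}(b)$ of alternatives that reach $b$ is a dominant set (if $c$ reaches $b$ and $d\not\succ c$... wait, rather: any alternative dominating a member of this set is itself a member, so its complement cannot dominate it), and since $\tc(T)$ is the minimal element of the inclusion-ordered family of dominant sets, $a\in\dom_{\succeq^*}(b)$ for all $b$, giving $D_{\succeq^*}(a)=A$ in one stroke. You instead first prove that the subtournament on $\tc(T)$ is strongly connected via the condensation/top-component argument and then route paths through $\tc(T)$; this needs the (standard, but not free) fact that the strongly connected components of a tournament are linearly ordered by domination, which the paper's argument avoids. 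Your alternative forward argument --- that $A\setminus D_{\succeq^*}(a)$ is itself a dominant set properly contained in $\tc(T)$ unless it is empty --- is essentially the mirror image of the paper's lemma (complement of the descendants of $a$ versus ancestors of $b$) and is the closest in spirit. The backward inclusion is identical in both: no edge, hence no path, enters $\tc(T)$ from outside. A side benefit of your primary route is that it makes explicit the strong connectivity of the top cycle, which is of independent interest; the cost is an extra structural lemma where the paper needs only the definition of a dominant set.
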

\begin{proof}
Let $T=(A,\succ)$ be a tournament. 
For the inclusion from left to right, let $a\in \tc(T)$.
We first show that for every $b\in A$, $\overline{D}{}_{\succeq^*}(b)$ is a dominant set. Assume for contradiction that there are alternatives $c\in \overline{D}{}_{\succeq^*}(b)$ and $d\not\in \overline{D}{}_{\succeq^*}(b)$ such that $c\not\succ d$. This implies that $d\succ c$ and consequently that $d\in \overline{D}{}_{\succeq^*}(b)$, a contradiction. Since $a$ is contained in all dominant sets, $a\in \overline{D}{}_{\succeq^*}(b)$ for every $b\in A$. As a consequence, $D_{\succeq^*}(a)=A$.

For the converse, assume that $a\not\in \tc(T)$ and let $b\in \tc(T)$. Since all alternatives in $\tc(T)$ dominate all alternatives outside $\tc(T)$, no alternative in $\tc(T)$, including $b$, can be contained in $D_{\succeq^*}(a)$.
\end{proof}

\citet{NiTa05a} have shown that reachability in tournaments can be decided in \aczero. This implies that deciding membership in the top cycle is also in \aczero. 

\begin{theorem}[\citealp{BFH09b}]\label{thm:tc_complexity}
Deciding whether an alternative is contained in the top cycle of a tournament is in \aczero.
\end{theorem}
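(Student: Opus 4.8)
The plan is to reduce membership in the top cycle to a pure reachability statement by means of Lemma~\ref{lem:smith_bilateral}, and then to invoke the first-order definability of reachability in tournaments, using the correspondence between (uniform) \aczero and first-order logic recalled at the beginning of this section.

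First, recall from Lemma~\ref{lem:smith_bilateral} that for a tournament $T=(A,{\succ})$ and $a\in A$ we have $a\in\tc(T)$ if and only if $D_{\succeq^*}(a)=A$, i.e., every alternative of $A$ is reachable from $a$ along a directed path in the dominance relation (the reflexive case is included, consistently with $a\in D_{\succeq^*}(a)$). Hence it suffices to exhibit a first-order formula that is true, for a given set $A$, a relation $\succ$, and an element $a$, precisely when $(A,{\succ})$ is a tournament and $a$ reaches every element of $A$; this places the membership decision problem in \aczero.

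Second, I would invoke the result of \citet{NiTa05a} that reachability in tournaments is in \aczero, so that there is a first-order formula $\mathit{reach}(x,y)$ over the vocabulary $\{{\succ}\}$ with $(A,{\succ})\models\mathit{reach}(a,b)$ if and only if $b\in D_{\succeq^*}(a)$. Combining it with Lemma~\ref{lem:smith_bilateral}, the first-order expression
\[
\tc(x)\;\leftrightarrow\;\forall y\;\mathit{reach}(x,y)
\]
captures membership in the top cycle, and conjoining it with the tournament axioms completes the proof. Only a single universal quantifier has been added on top of $\mathit{reach}$, so no new use of a \textsc{bit} predicate is incurred; by \citeauthor{NiTa05a}'s construction $\mathit{reach}$ itself uses none, in accordance with the remark that the results of this section do not require it.

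In the form stated here there is essentially no obstacle: the entire content sits in the cited reachability result, and the remainder is a one-line addition of a quantifier on top of Lemma~\ref{lem:smith_bilateral}. For orientation it is worth recalling why tournament reachability is so much better behaved than reachability in arbitrary digraphs, where transitive closure is provably not first-order definable: the strongly connected components of a tournament are linearly ordered by domination, so ``$a$ reaches $b$'' depends only on the relative position of their components in this order, and that ordering can be captured without iteration. A self-contained proof would have to reconstruct the corresponding formula explicitly, and that construction --- not anything specific to the top cycle --- is where the genuine difficulty lies.
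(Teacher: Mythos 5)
Your proposal is correct and follows exactly the paper's own route: \lemref{lem:smith_bilateral} reduces membership in $\tc$ to the statement that every alternative is reachable from $x$, and the result of \citet{NiTa05a} that reachability in tournaments is first-order definable (hence in \aczero) then yields the claim by adding a single universal quantifier. The paper additionally records a concrete first-order expression exploiting the two-step-domination structure underlying \citeauthor{NiTa05a}'s argument, but the logical content is the same as yours.
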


Interestingly, \citeauthor{NiTa05a}'s proof relies on the existence of an alternative that dominates every other alternative in at most two steps (see \secref{sec:uc_complexity}). As in the case of the uncovered set, there is a first-order expression for membership in the top cycle,
\[
\tc(x) \;\leftrightarrow\; \forall y\,\forall z\, (\forall v\, (z\succeq^3 v \rightarrow z\succeq^2 v)\wedge z\succeq^2 x \rightarrow z\succeq^2 y)\text{.}
\]

\subsection{Bipartisan Set}
\label{sec:bp_complexity}

The bipartisan set, \ie the set of actions of the tournament game that are played with positive probability in the unique Nash equilibrium (see also \secref{sec:adversarial}), can be found efficiently by solving a linear feasibility problem (see Algorithm~\ref{alg:bp}).

\begin{algorithm}[htb]
\begin{algorithmic}
	\STATE \hspace*{-1em} \textbf{procedure} $\bp(A,{\succ})$
    \FORALL{$i,j\in A$}
	\STATE \textbf{if} $i\succ j$ \textbf{then} $m_{ij}\leftarrow 1$
	\STATE \textbf{else if} $j\succ i$ \textbf{then} $m_{ij}\leftarrow -1$ 
	\STATE \textbf{else} $m_{ij}\leftarrow 0$ \textbf{end if}
	\ENDFOR
	\STATE $s \in \{s\in \mathbb{R}^n \mid $
	$\begin{array}[t]{ll}
		\sum_{j\in A} s_j\cdot m_{ij}\leq 0 & \forall i\in A \\
 		\sum_{j\in A} s_j = 1 \\
		s_j\geq 0 & \forall j\in A\}\\
    \end{array}$
	\STATE $B\leftarrow \{\, a\in A\mid s_a>0\,\}$
	\RETURN $B$
\end{algorithmic}
\caption{Bipartisan set}
\label{alg:bp}
\end{algorithm}

\begin{theorem}[\citealp{BrFi08b}]\label{thm:bp_complexity}
Deciding whether an alternative is contained in the bipartisan set is in P.
\end{theorem}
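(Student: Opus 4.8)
Deciding whether an alternative is contained in the bipartisan set is in P.

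The plan is to verify that Algorithm~\ref{alg:bp} correctly computes $\bp(T)$ and runs in polynomial time; deciding whether a fixed $a\in A$ lies in $\bp(T)$ is then a single membership test, so the problem lies in~P.

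First I would make the game-theoretic picture behind the algorithm precise. The matrix $M=(m_{ij})_{i,j\in A}$ assembled in the first loop is skew-symmetric ($m_{ij}=-m_{ji}$ and $m_{ii}=0$), so the \emph{tournament game} --- the two-player zero-sum game with payoff matrix $M$ --- is symmetric and has value~$0$. A mixed strategy $s$ (a vector with $s_j\ge 0$ and $\sum_{j} s_j=1$) is a maximin strategy of the row player exactly when it guarantees at least the value against every pure reply, i.e.\ $\sum_{i} s_i m_{ij}\ge 0$ for all $j\in A$; multiplying by $-1$, using $m_{ij}=-m_{ji}$, and renaming the summation and free indices turns this into the system $\sum_{j} s_j m_{ij}\le 0$ for all $i\in A$. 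Hence the feasible region of the linear program solved in Algorithm~\ref{alg:bp} is precisely the set of row-player maximin strategies. By \citet{LLL93b} the tournament game has a unique Nash equilibrium, which by symmetry is of the form $(p^{\ast},p^{\ast})$; therefore this feasible region equals the singleton $\{p^{\ast}\}$, the vector $s$ returned by the feasibility solver is $p^{\ast}$, and the algorithm's output $\{a\in A\mid s_a>0\}$ is the support of $p^{\ast}$, which is $\bp(T)$ by definition.

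For the running time I would invoke the polynomial-time solvability of linear programming (Khachiyan's ellipsoid method, or any interior-point algorithm). The instance has $|A|$ variables, $O(|A|)$ constraints, and all coefficients in $\{-1,0,1\}$, so its encoding length is polynomial in $|A|$; moreover, its feasible region being the single point $p^{\ast}$, this point is the unique vertex of the polytope $\{\,s : \sum_j s_j m_{ij}\le 0\ \forall i,\ \sum_j s_j=1,\ s\ge 0\,\}$, hence a rational vector whose numerators and denominators have bit-length polynomial in $|A|$ (Cramer's rule applied to a square subsystem with $\{0,\pm1\}$ data). Thus one can compute $p^{\ast}$ exactly in polynomial time, and the final test $s_a>0$ is exact and constant-time; so $a\in\bp(T)$ is decided in polynomial time, which is the theorem.

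The one non-routine ingredient is the appeal to uniqueness of the Nash equilibrium (\citealp{LLL93b}): it is what lets us identify $\bp(T)$ with the support of an \emph{arbitrary} feasible solution rather than having to range over all optimal strategies, and it is also what makes the feasible region zero-dimensional so that the returned point is well-defined. If one preferred to avoid relying on uniqueness, an equivalent route is to solve, for the fixed $a$, the linear program $\max\{\,s_a\mid s\ \text{feasible}\,\}$ and answer ``yes'' iff its optimum is strictly positive; this is again in~P. I expect the only care actually needed is the bit-complexity bookkeeping for the LP solution; the conceptual content --- that the bipartisan set is LP-definable --- is immediate from the definition.
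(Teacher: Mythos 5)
Your proposal is correct and follows essentially the same route as the paper, which establishes the result by exhibiting Algorithm~\ref{alg:bp}: reduce membership in $\bp$ to a linear feasibility problem over the maximin strategies of the tournament game, invoke the uniqueness of the Nash equilibrium due to \citet{LLL93b} to identify the (singleton) feasible region with the equilibrium strategy, and appeal to polynomial-time solvability of linear programming. The additional bit-complexity bookkeeping and the alternative formulation via $\max s_a$ are sound refinements but do not change the argument.
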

In incomplete tournaments---a generalization of tournaments where the dominance relation is not required to be complete---the membership decision problem is P-\emph{complete} \citep{BrFi08b}. Whether this also holds for tournaments is an open problem.

\subsection{Minimal Covering Set}

Since its introduction in 1988, there has been doubt whether the minimal  covering set can be computed efficiently \citep{Dutt88a,Lasl97a}. In contrast to all solution concepts considered so far in this chapter, there is no obvious reason why the corresponding decision problem should be in NP, \ie even \emph{verifying} whether a given set is indeed a minimal covering set is a non-trivial task. While it can easily be checked whether a set is a covering set, verification of minimality is problematic. For all previously considered concepts, there are witnesses of polynomial size---maximal sets that admit a maximal element in the case of the uncovered set or maximal transitive sets in the case of the Banks set---that permit the efficient verification of a choice set.
The problem of verifying a minimal covering set and the more general problem of deciding whether a given alternative $a$ is contained in $\mc$ are both in coNP, the class consisting of all decision problems whose complement is in NP. This is due to the fact that $\mc$ is contained in all covering sets.  In other words, $a\not\in \mc$ if and only if there is a (not necessarily minimal) covering set $B\subseteq A$ with $a\not\in B$.

By the inclusion of $\bp$ in $\mc$ \citep{LLL93b}, Algorithm~\ref{alg:bp} provides a way to efficiently compute a \emph{non-empty subset} of $\mc$.  While in general the existence of an efficiently computable subset cannot be exploited to efficiently compute the set itself (just recall the case of the Banks set from \secref{sec:ba_complexity}),
it is of great benefit in the case of $\mc$.

Algorithm~\ref{alg:mc} resembles Algorithm~\ref{alg:tc} in that it starts with an efficiently computable subset of the choice set to be computed---in this case the bipartisan set---and then iteratively adds specific elements outside the current set that are still uncovered.  However, in contrast to Algorithm~\ref{alg:tc}, we must only add elements that may not be covered in a later iteration, and it is unclear which elements these should be.  \lemref{lem:mcalgo} shows that elements in the minimal covering set of the uncovered alternatives can be safely added to the current set.  Since a non-empty subset of any minimal covering set, viz. the bipartisan set, can be found efficiently, this completes the algorithm.

\begin{algorithm}[htb]
\begin{algorithmic}
	\STATE \hspace*{-1em} \textbf{procedure} $\mc(A,{\succ})$
	\STATE $B\leftarrow \bp(A,{\succ})$	
	\LOOP
	\STATE $A'\leftarrow \bigcup_{a\in A\setminus B} (\uc(B\cup\{a\})\cap \{a\})$
	\STATE \textbf{if} $A'=\emptyset$ \textbf{then return} $B$ \textbf{end if}
	\STATE $B\leftarrow B\cup \bp(A',{\succ})$
	\ENDLOOP
\end{algorithmic}
\caption{Minimal covering set}
\label{alg:mc}
\end{algorithm}

%
\begin{lemma}\label{lem:mcalgo}
Let~$T=(A,{\succ})$ be a tournament, $B\subseteq\mc(T)$, and $A'=\bigcup_{a\in A\setminus B}(\uc(B\cup\{a\})\cap \{a\})$.  Then, $\mc(A')\subseteq\mc(T)$.
\end{lemma}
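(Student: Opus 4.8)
The plan is to prove the stronger statement that the right-hand side $A'$ is itself an $S_{\mathcal M}$-stable set (a covering set) of $T$; the lemma then follows from \thmref{thm:minstableproperties} (or directly from the minimality characterization of $\mc$ together with \lemref{lem:stablestable}), since $\mc(A')$ is the minimal covering set of the tournament induced on $A'$, and a covering set nested inside a covering set is a covering set of the original tournament. Concretely, since $\mc$ equals $\ms{S}_\mathcal{M}$ and $\mathcal{S}_\mathcal{M}$ is directed (\thmref{thm:Dutta} and \lemref{lem:pairwise}), \lemref{lem:stablestable} gives $\mathcal{S}_\mathcal{M}(A')\subseteq\mathcal{S}_\mathcal{M}(A)$ whenever $A'\in\mathcal{S}_\mathcal{M}(A)$, and then $\mc(A')$, being contained in every $S_\mathcal{M}$-stable set of $A'$ and hence $S_\mathcal{M}$-stable in $A$, must contain $\mc(A)=\ms{S}_\mathcal{M}(A)$ — wait, that is the wrong direction, so instead one argues that $\mc(A')$ is an $S_\mathcal M$-stable subset of $A$, and since it is also contained in $A'\subseteq B\cup(\text{uncovered elements})$, one shows it is in fact contained in $\mc(A)$ by exploiting $B\subseteq\mc(A)$.

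So the first real step is to show $A'\in\mathcal{S}_\mathcal{M}(A)$, i.e.\ that $A'$ is a covering set of $T$. Take any $a\in A\setminus A'$. If $a\in B$ this is immediate since $B\subseteq A'$. Otherwise $a\notin\uc(B\cup\{a\})$, so $a$ is covered in $B\cup\{a\}$ by some $b\in B\subseteq A'$; I must upgrade this to ``$a$ is covered in $A'\cup\{a\}$''. This is where I expect the main obstacle: being covered in the small set $B\cup\{a\}$ need not imply being covered in the larger set $A'\cup\{a\}$, because $A'$ may contain further alternatives that $b$ fails to dominate but $a$ does. The fix is to use that $B$ is a covering set of $T$ itself — no, $B$ is only assumed $\subseteq\mc(T)$, not stable. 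The right tool is that $B\subseteq\mc(T)$ and $\mc(T)$ is a covering set, so work inside $\mc(T)\cup\{a\}$: since $a\in\mc(T)\cup\{a\}$ and $\mc(T)$ is stable, if $a\notin\mc(T)$ then $a$ is covered in $\mc(T)\cup\{a\}$; combined with the hypothesis on $A'$ one shows every $a\in A\setminus A'$ lies outside $\mc(T)$ and is covered within $\mc(T)\cup\{a\}$, hence (since $A'\subseteq\mc(T)$, which needs checking: each element added to $A'$ is uncovered in some $B\cup\{a\}$ and must be argued to lie in $\mc(T)$) covered within $A'\cup\{a\}$.

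Thus the key intermediate claim to establish carefully is $A'\subseteq\mc(T)$: every $a\in A\setminus B$ with $a\in\uc(B\cup\{a\})$ belongs to $\mc(T)$. Suppose not; then $a\notin\mc(T)$, so since $\mc(T)$ is a covering set and $B\subseteq\mc(T)$, the set $\mc(T)$ contains some $b$ covering $a$ in $\mc(T)\cup\{a\}$, and in particular $b$ covers $a$ in $B\cup\{a\}$ because $D_{B\cup\{a\}}(a)\subseteq D_{\mc(T)\cup\{a\}}(a)\subseteq D_{\mc(T)\cup\{a\}}(b)\supseteq D_{B\cup\{a\}}(b)$ — one checks the dominion inclusions restrict properly — contradicting $a\in\uc(B\cup\{a\})$. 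Hence $A'\subseteq\mc(T)$. Now $A'$ is a covering set: for $a\in A\setminus A'\subseteq A\setminus B$, either $a\in\mc(T)$ but $a\notin A'$, or $a\notin\mc(T)$; in the latter case $a$ is covered in $\mc(T)\cup\{a\}\supseteq A'\cup\{a\}$ restricted appropriately; in the former case one re-runs the argument. Finally, having $A'\in\mathcal{S}_\mathcal{M}(A)$ with $A'\subseteq\mc(T)$, \lemref{lem:stablestable} gives that $\mc(A')=\ms{S}_\mathcal{M}(A')$ is $S_\mathcal{M}$-stable in $A$, and being contained in the covering set $A'\subseteq\mc(T)$ it is an $S_\mathcal M$-stable subset of $\mc(T)$; by \thmref{thm:minstableproperties}\emph{(iii)} ($\mc$ satisfies \ssp) applied to $\mc(T)\subseteq A$, $\mc$ restricted to $\mc(T)$ equals $\mc(T)$, and minimality forces $\mc(A')\subseteq\mc(T)$, completing the proof.
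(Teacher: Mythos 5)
There is a genuine gap, and it starts with a misreading of the definition of $A'$. Since $A'=\bigcup_{a\in A\setminus B}(\uc(B\cup\{a\})\cap\{a\})$, the set $A'$ is \emph{disjoint} from $B$ (each term of the union is either $\{a\}$ or $\emptyset$ for $a\in A\setminus B$), so your step ``if $a\in B$ this is immediate since $B\subseteq A'$'' is false, and with it the claim that $A'$ is a covering set of $T$: elements of $B\subseteq\mc(T)$ need not be covered in $A'\cup\{\cdot\}$. More importantly, your pivotal intermediate claim $A'\subseteq\mc(T)$ is false in general. Your argument for it takes the witness $b\in\mc(T)$ that covers $a$ in $\mc(T)\cup\{a\}$ and tries to restrict the covering to $B\cup\{a\}$; but ``covering in subsets'' requires \emph{both} endpoints to lie in the smaller set, and $b$ need not belong to $B$. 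Concretely, take $A=\{x,y,z,w\}$ with $x\succ y\succ z\succ x$, $x\succ w$, $w\succ y$, $z\succ w$; then $\mc(T)=\{x,y,z\}$, and for $B=\{y,z\}\subseteq\mc(T)$ the set $\{y,z,w\}$ is a $3$-cycle, so $w\in\uc(B\cup\{w\})$ and hence $w\in A'$ even though $w\notin\mc(T)$. (The lemma still holds there because $\mc(A')=\mc(\{x,w\})=\{x\}$, but your route to it collapses.)

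The paper's proof is built precisely around the possibility that $A'\not\subseteq\mc(T)$: it partitions $A'$ into $C=A'\cap\mc(T)$ and $C'=A'\setminus\mc(T)$ and shows that $C$ is externally stable \emph{within} $A'$, which suffices because $\mc(A')$ is contained in every covering set of the subtournament on $A'$. The work lies in relocating the covering witness: for $a\in C'$ the element $b$ covering $a$ in $\mc(T)\cup\{a\}$ cannot lie in $B$ (else restriction to $B\cup\{a\}$ would contradict $a\in A'$), and cannot lie in $\mc(T)\setminus(B\cup C)$ either, since such a $b$ is itself $B$-covered and chaining the two strict dominion inclusions over $B$ would again produce a $B$-cover of $a$; hence $b\in C$, and restricting to $C\cup\{a\}$ gives external stability of $C$. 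If you want to salvage your outline, this transitivity-of-covering step is the missing idea you would need to add; simply appealing to $\mc(T)$ being a covering set does not let you pull the witness down into $B$ or into $A'$.
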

\begin{proof}
	We extend the notation introduced in \defref{def:coveringrelation} by writing $a\cov_Zb$ if $D_Z(b)\subset D_Z(a)$ for an arbitrary set $Z\subseteq A$.
	Now, partition~$A'$, the set of alternatives not covered by $B$, into two sets~$C$ and~$C'$ of elements contained in $\mc(T)$ and elements not contained in $\mc(T)$, \ie $C=A'\cap\mc(T)$ and $C'=A'\setminus\mc(T)$.  We will show that~$C$ is externally stable within~$A'$.  Since $\mc(A')$ is contained in all sets that are externally stable within~$A'$, this means that $\mc(A')\subseteq\mc(T)$.

	In the following, we will use an easy consequence of the definition of the  covering relation: for two sets $Z,Z'$ with $Z'\subseteq Z\subseteq A$ and two alternatives $a,b\in Z'$, if~$a \cov_{Z} b$, then~$a \cov_{Z'} b$.  We will refer to this property as ``covering in subsets.''

	Let $a\in C'$.  Since $a\notin\mc(T)$, there has to be some $b\in\mc(T)$ such that $b \cov_{\mc(T)\cup\{a\}} a$.  It is easy to see that $b\notin B$.  Otherwise, since $B\subseteq\mc(T)$ and by covering in subsets, $b \cov_{B\cup\{a\}} a$, contradicting the assumption that~$a\in A'$.
	On the other hand, assume that $b\in\mc(T)\setminus(B\cup C)$.  Since~$b \cov_{\mc(T)\cup\{a\}} x$, $B\subseteq\mc(T)$, and by covering in subsets, we have that $D_B(a)\subset D_B(b)$.  Furthermore, since $b\notin A'$, there has to be some $c\in B$ such that $D_B(b)\subset D_B(c)$.  Combining the two, we have $D_B(a)\subset D_B(c)$, \ie~$c \cov_{B\cup\{a\}} a$.  This again contradicts the assumption that $a\in A'$. 
	It thus has to be the case that $b\in C$.  Since $C\subseteq\mc(T)$, it follows from covering in subsets that~$b \cov_{C\cup\{a\}} a$.  We have shown that for every $a\in C'$, there exists $b\in C$ such that $b \cov_{C\cup\{a\}} a$, \ie~$C$ is externally stable within~$A'$.
\end{proof}

Since $B$ and $A'$ in the statement of \lemref{lem:mcalgo} are always disjoint, we obtain a powerful tool: For every proper subset of the minimal  covering set, the lemma tells us how to find another disjoint and non-empty subset. This tool allows us to iteratively compute $\mc$ (see Algorithm~\ref{alg:mc}).\footnote{Lemma~\ref{lem:mcalgo} can also be used to construct a \emph{recursive} algorithm for computing $\mc$ without making any reference to $\bp$. However, such an algorithm has exponential worst-case running time.}

\begin{theorem}[\citealp{BrFi08b}]\label{thm:mc_complexity}
Deciding whether an alternative is contained in the minimal covering set of a tournament is in P.
\end{theorem}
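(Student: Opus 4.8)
The plan is to prove that Algorithm~\ref{alg:mc} is correct and runs in polynomial time; this suffices, since the membership decision problem for $\mc$ is then solved by running the algorithm on $T$ and testing whether the given alternative belongs to the returned set.

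First I would establish, by induction on the number of completed iterations of the main loop, the invariant $B\subseteq\mc(T)$. The base case holds because $B$ is initialized to $\bp(T)$ and $\bp\subseteq\mc$ \citep{LLL93b}. For the inductive step, consider an iteration starting with $B\subseteq\mc(T)$ in which $A'\ne\emptyset$. Since $A'\subseteq A\setminus B$ by construction, $B$ and $A'$ are disjoint, so \lemref{lem:mcalgo} applies and yields $\mc(A')\subseteq\mc(T)$; together with $\bp(A')\subseteq\mc(A')$ \citep{LLL93b} this gives that the new set $B\cup\bp(A')$ is again contained in $\mc(T)$. The same reasoning gives progress and termination: whenever $A'\ne\emptyset$, the set $\bp(A')$ is a non-empty subset of $A\setminus B$ (tournament solutions never return the empty set), so $|B|$ strictly increases in each non-terminating iteration, and hence the loop runs at most $|A|$ times.

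Next I would argue correctness at termination. The loop returns exactly when $A'=\emptyset$, \ie when $a\notin\uc(B\cup\{a\})$ for every $a\in A\setminus B$. As the covering relation is irreflexive, $a\notin\uc(B\cup\{a\})$ forces some element of $B$ to cover $a$ within $B\cup\{a\}$, so $B$ is a covering set (externally $S_\mathcal{M}$-stable set) of $T$. Since $\mathcal{S}_\mathcal{M}$ is closed under intersection (\thmref{thm:Dutta} combined with \lemref{lem:pairwise}), $\mc(T)$ is the unique minimal covering set and is therefore contained in every covering set; in particular $\mc(T)\subseteq B$. Together with the invariant $B\subseteq\mc(T)$ this gives $B=\mc(T)$, so the algorithm outputs the minimal covering set.

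Finally, for the running time: a single iteration computes $\uc(B\cup\{a\})$ for at most $|A|$ alternatives $a$, each computation polynomial by Algorithm~\ref{alg:uc}, and makes one call to $\bp$ on a set of size at most $|A|$, which is polynomial by \thmref{thm:bp_complexity}; with at most $|A|$ iterations the total running time is polynomial in $|A|$. The one genuinely non-routine ingredient is \lemref{lem:mcalgo}, which is already in hand; given that lemma, the remaining delicate point is the joint use of the disjointness of $B$ and $A'$ and the non-emptiness of $\bp(A')$, which is precisely what both certifies each newly added element as a member of $\mc(T)$ and forces the algorithm to halt after linearly many rounds.
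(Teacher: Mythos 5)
Your proposal is correct and follows essentially the same route as the paper: it uses Algorithm~\ref{alg:mc}, the inclusion $\bp\subseteq\mc$, and \lemref{lem:mcalgo} to maintain the invariant $B\subseteq\mc(T)$ while adding a non-empty disjoint chunk per iteration, and concludes at termination via the uniqueness of the minimal covering set (\thmref{thm:Dutta} with \lemref{lem:pairwise}). The only difference is that you spell out the loop invariant, termination, and final equality more explicitly than the paper's surrounding prose does, which is a faithful elaboration rather than a different argument.
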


\subsection{Tournament Equilibrium Set}

As in the case of the minimal covering set, there has been concern whether the tournament equilibrium set of a tournament can be computed efficiently \citep{Lasl97a}.  In contrast to the minimal covering set, it turned out that there exists no efficient algorithm for this task unless P equals NP.

\begin{theorem}[\citealp{BFHM09a}]\label{thm:teq_complexity}
Deciding whether an alternative is contained in the tournament equilibrium set of a tournament is NP-hard.
\end{theorem}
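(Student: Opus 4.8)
The statement to prove is that deciding membership in $\teq(T)$ is NP-hard. I would establish this by a polynomial-time many-one reduction from a suitable NP-hard problem —— the natural candidate being 3-SAT, or better, a structured variant such as the problem of deciding membership in the Banks set, which is already known to be NP-complete (\thmref{thm:ba_complexity}). Reducing from the Banks-set membership problem is attractive because both $\ba$ and $\teq$ are built from transitive/retentive substructures, so a gadget that encodes a Banks trajectory has a decent chance of simultaneously controlling $\teq$-membership. The overall shape of the argument is: given an instance $(T,a)$ of the NP-hard source problem, construct in polynomial time a tournament $T'$ (on an enlarged vertex set built from $T$ together with auxiliary ``gadget'' alternatives) and designate an alternative $a'$ such that $a' \in \teq(T')$ if and only if $(T,a)$ is a yes-instance.

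**Key steps, in order.** First I would fix the source problem and recall precisely what a yes-instance looks like combinatorially (e.g.\ an assignment, or a maximal transitive subset with a prescribed top element). Second, I would design the gadget: a small tournament attached to $T$ whose role is to force the recursion defining $\teq$ —— recall $\teq(T') = \mr{\teq}(T')$, i.e.\ the union of minimal $\teq$-retentive sets —— to ``probe'' the dominator set $\dom(a')$ in a way that recovers the combinatorial certificate. The crucial leverage is that $\teq$ is defined recursively on strictly smaller dominator sets, so by choosing $\dom(a')$ to be (an isomorphic copy of) a tournament on which $\teq$'s behaviour is already pinned down, I can propagate control. Third, I would prove the two directions of the equivalence: (a) if $(T,a)$ is a yes-instance, exhibit a minimal $\teq$-retentive set of $T'$ containing $a'$ (or, equivalently, show $a' \in S(\dom(b'))$ for the relevant $b'$ forcing $a'$ in); (b) if $(T,a)$ is a no-instance, show every minimal $\teq$-retentive set excludes $a'$, which amounts to showing $a'$ is ``properly dominated'' out —— there is always some $b'$ with $a' \notin \teq(\dom(b'))$. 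Fourth, I would check that the construction is polynomial-time computable and that $T'$ is indeed a tournament (asymmetric and complete), wiring the omitted edges by the usual convention.

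**The main obstacle.** The hard part will be controlling $\teq$ in the gadget without knowing whether \conref{con:teq} holds —— i.e.\ without being able to assume that minimal $\teq$-retentive sets are unique. Since the paper explicitly notes that it is not known whether $\teq$ satisfies \emph{any} of the basic properties, the reduction cannot lean on \wsp, \ssp, monotonicity, or idempotency of $\teq$; it must work with the bare recursive definition and the one unconditional fact that the non-empty intersection of two $\teq$-retentive sets is again $\teq$-retentive. This means the gadget has to be robust: whatever the (possibly several) minimal $\teq$-retentive sets of the relevant subtournaments turn out to be, the membership of $a'$ must come out right. The standard way to achieve this robustness is to build components that are homogeneous or highly symmetric (forcing $\teq$ to select everything there by the non-emptiness and the tournament-function axioms of \defref{def:tfunction}), and to isolate the genuine ``decision'' into a single small asymmetric core whose $\teq$ can be computed by hand. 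Verifying that the core behaves as intended —— essentially a finite but delicate case analysis of $\teq$ on a fixed small tournament, combined with an induction on tournament order to push the behaviour through the dominator-set recursion —— is where the real work lies.
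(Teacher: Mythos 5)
First, note that the paper itself does not prove this theorem: it is imported from \citet{BFHM09a}, and the only internal information about the proof is the remark immediately following the statement, namely that the reduction simultaneously establishes NP-hardness for \emph{every} tournament solution $S$ with $\teq\subseteq S\subseteq\ba$. That remark reveals the actual architecture of the proof: a single polynomial-time reduction from 3SAT to a tournament $T'$ with a designated alternative $d$ such that $d\in\ba(T')$ and $d\in\teq(T')$ hold \emph{for the same instances} (the satisfiable ones), which is what makes the sandwich corollary for $\me$ (\coref{cor:me_complexity}) fall out for free. Your instinct that the same transitive-trajectory gadget can control $\ba$ and $\teq$ at once is therefore exactly right, and your identification of the main obstacle --- that one may not assume \conref{con:teq} or any of the basic properties of $\teq$, so the gadget must be verified against the bare recursive definition, with highly symmetric components used to neutralize the parts of the tournament one does not want to reason about --- is also the correct diagnosis of where the difficulty lies.

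Nevertheless, as a proof the proposal has a genuine gap: the entire mathematical content of the theorem is the existence of a concrete construction, and you never exhibit one. You do not fix the source problem (wavering between 3-SAT and Banks membership), you do not specify a single vertex or edge of the gadget, and you do not carry out the case analysis or the induction on dominator sets that you yourself flag as ``where the real work lies.'' The choice of source problem is not cosmetic either: reducing \emph{from} Banks membership buys you nothing, because there is no known generic transfer from $\ba$-hardness to $\teq$-hardness --- absent such a transfer you would still have to build, from scratch, a tournament on which $\teq$ can be computed by hand, which is precisely the direct-from-3SAT construction. A referee could not verify the theorem from what is written; what you have is a correct and well-motivated research plan, not a proof.
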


The proof the previous theorem actually shows that the membership decision problem for \emph{any} tournament solution that is sandwiched between $\ba$ and $\teq$, \ie any tournament solution $S$ with $\teq\subseteq S\subseteq \ba$, is NP-hard. We thus obtain the following corollary.

\begin{corollary}\label{cor:me_complexity}
Deciding whether an alternative is contained in the minimal extending set of a tournament is NP-hard if $\mathcal{R}_{\teq}$ is directed.
\end{corollary}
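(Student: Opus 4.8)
The plan is to obtain NP-hardness for free from the sandwiching argument that already underlies \thmref{thm:teq_complexity}. The key point is that, under the hypothesis that $\mathcal{R}_{\teq}$ is directed, the minimal extending set is well-defined \emph{and} satisfies $\teq\subseteq\me\subseteq\ba$; since the reduction used to prove \thmref{thm:teq_complexity} in fact establishes NP-hardness of the membership problem for \emph{every} tournament solution $S$ with $\teq\subseteq S\subseteq\ba$, applying it to $S=\me$ gives the claim.

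First I would check that $\me$ is a genuine tournament solution under the hypothesis. By \thmref{thm:teqsq}\,(ii), directedness of $\mathcal{R}_{\teq}$ implies that $\mathcal{S}_{\mathcal{M}^*}$ is directed (equivalently, \conref{con:me} holds, since for a concept of qualified subsets directedness, pairwise intersection, and closure under intersection all coincide by \lemref{lem:pairwise}). Hence every tournament possesses a unique minimal extending set and $\me=\ms{S}_{\mathcal{M}^*}$ is well-defined.

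Next I would record the two inclusions. The inclusion $\teq\subseteq\me$ is exactly \coref{cor:teqsubset}. For $\me\subseteq\ba$, I would invoke \thmref{thm:minstableproperties}\,(i), which gives $\ms{S}_\mathcal{Q}\subseteq S_\mathcal{Q}^\infty\subseteq S_\mathcal{Q}$ whenever $\mathcal{S}_\mathcal{Q}$ is directed; specializing to $\mathcal{Q}=\mathcal{M}^*$ and recalling $S_{\mathcal{M}^*}=\ba$ yields $\me\subseteq\ba$. Thus $\teq\subseteq\me\subseteq\ba$.

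Finally, I would appeal to the remark following \thmref{thm:teq_complexity}: the reduction used there proves NP-hardness of the membership decision problem for every tournament solution wedged between $\ba$ and $\teq$. Applying this to $S=\me$ completes the proof. There is essentially no obstacle beyond assembling already-established ingredients; the only point requiring a moment's care is checking that the hypothesis ``$\mathcal{R}_{\teq}$ is directed'' is strong enough both to make $\me$ well-defined and to force $\me\subseteq\ba$ — which is precisely where \thmref{thm:teqsq}\,(ii) and \thmref{thm:minstableproperties} enter — so that no additional appeal to \conref{con:me} is needed.
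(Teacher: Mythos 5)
Your proposal is correct and follows essentially the same route as the paper: the corollary is obtained by combining the observation that the reduction for \thmref{thm:teq_complexity} establishes NP-hardness for every solution $S$ with $\teq\subseteq S\subseteq\ba$ with the inclusions $\teq\subseteq\me$ (\coref{cor:teqsubset}) and $\me\subseteq\ba$ (via \thmref{thm:teqsq}\,\textit{(ii)} and \thmref{thm:minstableproperties}\,\textit{(i)}), all of which hold under the hypothesis that $\mathcal{R}_{\teq}$ is directed. Your additional care in verifying that the hypothesis makes $\me$ well-defined is exactly the right sanity check and is consistent with the paper's treatment.
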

There are no obvious reasons why any of these problems---membership in $\teq$ and $\me$---should be in NP. They are possibly much harder.

\thmref{thm:teq_complexity} implies that the existence of a polynomial-time algorithm for computing $\teq$ is highly unlikely. Nevertheless, a practical algorithm for $\teq$ that runs reasonably well on moderately-sized instances, even though its worst-case complexity is of course exponential, would be very useful. The following alternative definition of $\teq$ will be valuable for devising a relatively efficient $\teq$ algorithm. Let $A$ be a set and $R$ a binary relation on $A$. Then, $\mtc(A,R)$ is defined as the set consisting of the maximal elements of the asymmetric part of the transitive closure of $R$ in $A$. Now, define the tournament equilibrium set of a tournament $(A,{\succ})$ as $\teq[A,\succ]=\mtc(A,\teqrel[A])$ where $\teqrel[B]$ is defined as the binary relation on $B\subseteq A$ such that, for all $a,b\in B$,  $a\teqrel[B]b$ if and only if $a\in\teq[{\dom_B(b)}]$. Further let $\teq[\emptyset,{\succ}]=\emptyset$.  
Like the covering relation, the $\teq$ relation~$\teqrel[A]$ is a subrelation of the dominance relation~$\succ$.  
A naive algorithm for computing $\teq$ can be implemented by recursively computing the $\teq$ relation and then returning the maximal elements of the asymmetric part of its transitive closure, which can be efficiently computed using standard algorithms such as that of \citet{Tarj72a}. The performance of this algorithm can be significantly improved by assuming that Conjecture~\ref{con:teq} holds \citep{BFHM09a}. This assumption can fairly be made because otherwise $\teq$ would not satisfy any of the basic properties and thus be severely compromised as a solution concept (see \secref{sec:teqsection}). Consequently, the issue of computing $\teq$ would be irrelevant.

\begin{algorithm}[htb]
\begin{algorithmic}
	\STATE \textbf{procedure} $\teq(A,{\succ})$
	\STATE $R\leftarrow \emptyset$
	\STATE $B\leftarrow C\leftarrow \co(A,{\succ})$	
	\LOOP
	\STATE $R\leftarrow R\cup \{ (b,a) \colon a\in C \wedge b\in \teq(\dom(a))\}$
	\STATE $D\leftarrow \bigcup_{a\in C} \teq(\dom(a))$
	\STATE \textbf{if} $D\subseteq B$ \textbf{then return} $\mtc(B,R)$ \textbf{end if}
	\STATE $C\leftarrow D$
	\STATE $B\leftarrow B\cup C$
	\ENDLOOP
\end{algorithmic}
\caption{Tournament Equilibrium Set}
\label{alg:teq}
\end{algorithm}

Algorithm~\ref{alg:teq} computes $\teq(T)$ by initializing working set $B$ with the Copeland set $\co(T)$. These alternatives are good candidates to be included in $\teq(T)$ and the small size of their dominator sets speeds up the computation of their $\teq$-dominators. Then, all alternatives that $\teq$-dominate any alternative in $B$ are iteratively added to $B$ until no more such alternatives can be found, in which case the algorithm returns $\mtc$ of~$\teqrel[B]$.

Experimental results show that Algorithm~\ref{alg:teq} dramatically outperforms a naive algorithm that computes the entire $\teq$ relation \citep{BFHM09a}.

\subsection{Summary}

\tabref{tbl:complexity} summarizes the computational complexity of the considered tournament solutions. There are linear-time algorithms for the Copeland set and the top cycle. Moreover, a single element of the Banks set can be found in linear time. Computing the Banks set, the tournament equilibrium set, and the minimal extending set is intractable unless P equals NP. Apparently, the minimal covering set and the bipartisan set fare particularly well in terms of the tradeoff between efficient computability and choice-theoretic attractiveness (see also \tabref{tbl:comparison}).

\begin{table}[htb]
\centering\figuresize
\begin{tabular}{lrr}
\toprule
Tournament solution & Computational complexity\\
\midrule
Copeland set ($\co$) & \tczero-complete\\
Uncovered set ($\uc$) & in \aczero \\
Banks set ($\ba$) & NP-complete \\
Top cycle ($\tc$) & in \aczero \\
Bipartisan set ($\bp$) & in P \\
Minimal covering set ($\mc$) & in P \\
Minimal extending set ($\me$) & NP-hard \\
Tournament equilibrium set ($\teq$) & NP-hard \\
\bottomrule
\end{tabular}
\caption{The computational complexity of tournament solutions}
\label{tbl:complexity}
\end{table}


\begin{savequote}
\sffamily
Although the motivation and most of the applications of this study are from animal societies it should be remarked that there are other examples of dominance relations.
\qauthor{H.~G.~Landau, 1951}
\end{savequote}

\section{Applications to Decision-Making}\label{sec:applications}

Unsurprisingly, a framework as general as that of tournament solutions has numerous applications spanning diverse areas, among them \emph{sports competitions} \citep[see, \eg][]{Usha76a,MGGM05a}, \emph{webpage and journal ranking} \citep[see, \eg][]{KoSt07a,BrFi07b}, \emph{biology} \citep[see, \eg][]{Land51a,Land51b,Land53a}, and \emph{psychology} \citep[see, \eg][]{Schj22a,Slat61a}. 
Here, we will focus on applications related to decision-making, namely \emph{collective decision-making} (social choice theory), \emph{adversarial decision-making} (theory of zero-sum games), and \emph{coalitional decision-making} (cooperative game theory).\footnote{Besides the given examples of \emph{multi-agent} decision making, tournament solutions have also been applied to \emph{individual} decision making \citep[see, \eg][]{BMP+06a}.} It is quite remarkable that the essence of each of these loosely related areas can be boiled down to choice based on pairwise comparisons.

\subsection{Collective Decision-Making}
\todo{check $(R,A)$ and $(A,R)$}
Many problems in the social sciences involve a group of agents that ought to choose from a set of alternatives in a way that is faithful to the agents' individual preferences over the alternatives. Such problems of preference aggregation are commonly studied in \emph{social choice theory} and have various applications in political science, welfare economics, and computational multiagent systems.


We consider a finite set of agents~$N=\{1,\dots,n\}$ who entertain preferences over a universal set of alternatives or candidates~$X$. Each agent~$i\in N$ possesses a transitive and complete preference relation~$R_i$ over the alternatives in~$X$.
We have~$a \mathrel{R_i} b$ denote that agent~$i$ values alternative~$a$ at least as much as alternative~$b$. In accordance with the conventional notation, we write~$P_i$ for the strict part of~$R_i$, \ie~$a \mathrel{P_i} b$ if~$a \mathrel{R_i} b$ but not~$b \mathrel{R_i} a$. 
Similarly,~$I_i$ denotes~$i$'s indifference relation, \ie $a \mathrel{I_i} b$ if both~$a \mathrel{R_i} b$ and~$b \mathrel{R_i} a$. 
A restricted type of preference relations are \emph{linear orders}---\ie  transitive, complete, and anti-symmetric relations---over the alternatives.
The set of all preference relations over a set of alternatives $X$ will be denoted by~$\mathcal{R}(X)\subseteq X\times X$. 


The central object of study in this section are \emph{social choice functions}, \ie functions that map the individual preferences of the agents and a finite set of feasible alternatives to a set of socially preferred alternatives.

\begin{definition}
A \emph{social choice function (SCF)} is a function $f:\fone(X)\times \mathcal{R}(X)^N \rightarrow \fone(X)$ such that $f(A,R)\subseteq A$ for all $R\in \mathcal{R}(X)^N$ and $A\in \fone(X)$.
\end{definition}

We will now define three conditions on SCFs, each of which highlights a different aspect of reasonable preference aggregation.
The first condition is \emph{Pareto-optimality}, which states that an alternative should not be chosen if there exists another alternative that \emph{all} agents unanimously prefer to the former.

\begin{definition}
An SCF $f$ satisfies \emph{Pareto-optimality} if for all $A\in \fone(X)$ and $R=(R_1,\dots,R_n)\in \mathcal{R}(X)^N$, $a\in A\setminus f(A,R)$ implies that there exists $b\in A$ with $b \mathrel{P_i} a$ for all $i\in N$.
\end{definition}

The next condition requires that choices from a set of feasible alternatives should not depend on preferences over alternatives that are not contained in this set.

\begin{definition}
An SCF $f$ satisfies \emph{independence of irrelevant alternatives (IIA)} if $f(A,R)=f(A,R')$ for all $A\in \fone(X)$ and $R,R'\in \mathcal{R}(X)^N$ such that $R|_A=R'|_{A}$.
\end{definition}

Finally, the third---and most demanding---condition demands that choices from a set and its subsets should be consistent.

\begin{definition}
An SCF $f$ satisfies the \emph{weak axiom of revealed preference (WARP)} if $f(B,R)=B\cap f(A,R)$ for all $A,B\in \fone(X)$ such that $B\subseteq A$ and $B\cap f(A,R)\ne \emptyset$.
\end{definition}

WARP has been shown equivalent to the existence of a complete and transitive relation on the universal set of alternatives, the maximal elements of which are precisely those that are chosen from any feasible set of alternatives \citep{Arro59a}. The existence of such a relation is usually equated with the \emph{rationality of choice}.

A minimal requirement for every SCF is that it should be sensitive to the preferences of more than one agent. In particular, there should \emph{not} be a single agent who can rule out alternatives no matter which preferences the other agents have. Such an agent is usually called a dictator.

\begin{definition}
An SCF $f$ is \emph{dictatorial} if there exists $i\in N$ such that for all $A\in \fone(X)$, $a,b\in A$, and $R=(R_1,\dots,R_n)\in \mathcal{R}(X)^N$, if  $b \mathrel{P_i} a$, then $a\not\in f(A,R)$.
\end{definition}

Now, \citeauthor{Arro51a}'s famous impossibility theorem states that there exists no non-dictatorial SCF that meets all three seemingly mild conditions defined above.

\begin{theorem}[\citealp{Arro51a,Arro59a}]
Every SCF that satisfies Pareto-optimality, IIA, and WARP is dictatorial.
\end{theorem}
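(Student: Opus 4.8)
The plan is to reduce the statement to the classical form of Arrow's theorem for social welfare functions and then run the standard decisive-coalition argument. First I would exploit \textsf{WARP}: since $f$ is $\fone(X)$-valued, $f(\{a,b\},R)$ is non-empty for every pair $\{a,b\}$, so for each profile $R=(R_1,\dots,R_n)$ we may define a binary relation $R_f(R)$ on $X$ by stipulating $a\mathrel{R_f(R)}b$ if and only if $a\in f(\{a,b\},R)$. By the characterization of rational choice due to \citet{Arro59a} --- the result already invoked in this section --- \textsf{WARP} is equivalent to the statement that each $R_f(R)$ is complete and transitive and that $f(A,R)=\{a\in A\mid a\mathrel{R_f(R)}b\text{ for all }b\in A\}$ for every $A\in\fone(X)$. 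Thus $R\mapsto R_f(R)$ is an Arrovian social welfare function, and it suffices to show it has a dictator in Arrow's sense.

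Next I would translate the two remaining hypotheses into properties of this social welfare function. Applying \textsf{IIA} to two-element feasible sets shows that $R_f(R)|_{\{a,b\}}$ depends only on $(R_1|_{\{a,b\}},\dots,R_n|_{\{a,b\}})$, which is exactly Arrow's independence condition. Applying Pareto-optimality to $\{a,b\}$ shows that if $b\mathrel{P_i}a$ for all $i\in N$ then $b\notin f(\{a,b\},R)$, \ie $a\mathrel{P_f(R)}b$, which is the weak Pareto principle. Conversely, once we have produced an agent $d$ with $a\mathrel{P_d}b\Rightarrow a\mathrel{P_f(R)}b$ for all profiles and all pairs, dictatoriality of $f$ follows: $b\mathrel{P_d}a$ then forces $a\notin\{x\in A\mid x\mathrel{R_f(R)}y\text{ for all }y\}=f(A,R)$ for every feasible $A$ containing $a$.

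The heart of the proof is the usual contagion machinery, carried out under the standing assumption $|X|\ge 3$ (which is implicit in the statement: with only two alternatives, majority rule with an odd electorate is a non-dictatorial counterexample). Call $C\subseteq N$ \emph{decisive for the ordered pair $(a,b)$} if $a\mathrel{P_i}b$ for all $i\in C$ implies $a\mathrel{P_f(R)}b$, and \emph{decisive} if it is decisive for every ordered pair; by weak Pareto, $N$ is decisive. I would then establish the two classical lemmas. The \emph{field-expansion lemma} --- if $C$ is decisive for a single pair, then $C$ is decisive --- is obtained by introducing a third alternative $c$, choosing a profile in which the members and non-members of $C$ rank $\{a,b,c\}$ appropriately, and combining the local decisiveness with transitivity of $R_f(R)$ and \textsf{IIA}. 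The \emph{contraction lemma} --- if $C$ is decisive and $|C|\ge 2$, then some non-empty proper subset of $C$ is decisive --- is obtained by splitting $C=C_1\cup C_2$, constructing a suitable profile on a triple $\{a,b,c\}$, and using transitivity to conclude that $C_1$ is decisive for $(a,c)$ or $C_2$ is decisive for $(c,b)$, whereupon the field-expansion lemma upgrades this to full decisiveness. Iterating the contraction lemma from $N$ downward yields a singleton $\{d\}$ that is decisive, and this $d$ is the sought dictator.

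The main obstacle, as always with Arrow-type arguments, is confirming that the domain is rich enough for the two lemmas, namely that every assignment of strict orderings on a triple $\{a,b,c\}$ to the agents is realized by some profile in $\mathcal{R}(X)^N$, and that \textsf{IIA} genuinely permits pair-by-pair reasoning regardless of how each agent's preference is extended to the rest of $X$. Both are immediate because $\mathcal{R}(X)$ consists of \emph{all} complete transitive relations on $X$ and $|X|\ge 3$, but the reduction to the finitely many relevant alternatives should be spelled out. The only preliminary subtlety is the one already noted --- that $f$ being $\fone(X)$-valued makes $R_f(R)$ complete --- after which the rest is bookkeeping.
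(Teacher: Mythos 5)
The paper does not prove this statement at all: it is quoted as Arrow's classical impossibility theorem with a citation to \citet{Arro51a} and \citet{Arro59a}, so there is no in-paper argument to compare yours against. Your proposal is the standard proof and is essentially sound: \textsf{WARP} rationalizes $f$ by a complete transitive relation $R_f(R)$ (the paper itself records this equivalence, attributed to \citealp{Arro59a}, in the sentence following the WARP definition), \textsf{IIA} and Pareto-optimality restricted to pairs become Arrow's independence and weak Pareto conditions for the induced social welfare function, and the field-expansion/contraction machinery then produces a dictator, which translates back into the paper's definition of a dictatorial SCF exactly as you describe. Two points deserve care. First, your observation that $|X|\ge 3$ must be assumed is correct and worth making explicit, since the paper's statement suppresses this hypothesis and is false verbatim for $|X|=2$ (pairwise majority with an odd electorate is a counterexample). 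Second, in the contraction step what you actually obtain for $C_1$ or $C_2$ is \emph{almost}-decisiveness for a single ordered pair (decisiveness only against the specific opposing configuration of the outsiders), so your field-expansion lemma must be stated and proved in the form ``almost decisive for one pair implies decisive for all pairs''; as written, your lemma takes plain single-pair decisiveness as its hypothesis, which is stronger than what the contraction step delivers. This is standard bookkeeping rather than a genuine obstruction, but the sketch should be adjusted so the two lemmas actually compose.
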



Put in other words, the four properties---non-dictatorship, Pareto-optimality, IIA, and WARP---are logically inconsistent and (at least) one of them needs to be excluded to obtain positive results. Clearly, dictatorship is not acceptable and IIA merely states that the SCF represents a reasonable model of preference aggregation \citep[see, \eg][]{Schw86a,BoTi91a}. \citet{Wils72a} has shown that dropping Pareto-optimality only allows SCFs that are constant (\ie completely unresponsive) or fully determined by the preferences of a single agent. Thus, the only remaining possibility is to exclude WARP.

At this point, we will pursue two different paths. The first one is to weaken WARP by substituting it with less prohibitive conditions whereas the second one ignores consistency with respect to the set of alternatives altogether and instead focusses on consistency with respect to the set of voters.

\subsubsection{Variable Set of Alternatives}\label{sec:varalt}

Before addressing WARP, let us add a small number of intuitively acceptable conditions, which yield a one-to-one correspondence between SCFs and tournament solutions. The first two conditions simply require the SCF to be symmetric with respect to the agents and the alternatives, \ie agents and alternatives are treated equally. These conditions are strengthenings of non-dictatorship and IIA, respectively. 

\begin{definition}
An SCF $f$ is \emph{anonymous} if $f(A,R)=f(A,R')$ for all $A\in \fone(X)$,  $R=(R_1,\dots,R_n),R'=(R'_1,\dots,R'_n)\in \mathcal{R}(X)^N$, and permutations $\pi:N\rightarrow N$ such that $R'_i=R_{\pi(i)}$ for all $i\in N$.
\end{definition}

Clearly, a dictatorial SCF cannot be anonymous.

\begin{definition}
An SCF $f$ is \emph{neutral} if $\pi(f(A,R))=f(A,R')$ for all $A\in \fone(X)$, $R=(R_1,\dots,R_n),R'=(R'_1,\dots,R'_n)\in \mathcal{R}(X)^N$, and permutations $\pi:A\rightarrow A$ such that $a \mathrel{R'_i} b$ if and only if $\pi(a)\mathrel{R_i}\pi(b)$ for all $a,b\in X$ and $i\in N$.
\end{definition}

Neutrality can be seen to imply IIA by letting $\pi$ be the identity function in the definition above.

It also appears to be reasonable to demand that SCFs are monotonic in the sense that increased support may not hurt an alternative in feasible sets that only consist of two alternatives.

\todo{consider definition from set-rationalizable choice paper, this definition implies IIA on pairs!}
\begin{definition}
An SCF $f$ is \emph{positive responsive} if $f(\{a,b\},R')=\{a\}$ for all $a,b\in X$, $R=(R_1,\dots,R_n),R'=(R'_1,\dots,R'_n)\in \mathcal{R}(X)^N$ such that there exists $i\in N$ with $R_j=R'_j$ for all $j\ne i$ and
\begin{enumerate}[label=\textit{(\roman*)}]
\item $f(\{a,b\},R)=\{a\}$, $b \mathrel{R_i} a$, and $a \mathrel{R'_i} b$, or
\item $f(\{a,b\},R)=\{a,b\}$, $a \mathrel{I_i} b$ and $a \mathrel{P'_i} b$, or $b \mathrel{P_i} a$ and $a \mathrel{R'_i} b$.
\end{enumerate}
\end{definition}

\citet{May52a} has shown that in the case of only two alternatives, anonymity, neutrality, and positive responsiveness completely characterize \emph{majority rule}, \ie the voting rule that chooses an alternative whenever at least half of the voters prefer it to the other alternative. Now, given a preference profile $R\in \mathcal{R}(X)$ and a set of feasible alternatives $A\in \fone(X)$, define a dominance relation such that alternative~$a$ dominates alternative~$b$ whenever more than half of the voters prefer $a$ to $b$. Obviously, this dominance relation is asymmetric and guaranteed to be complete, \ie a tournament, if the number of agents is odd and individual preferences are linear.\footnote{The reasonable, but relatively strong, assumption of so-called \emph{single-peaked} individual preferences guarantees that the dominance relation is transitive \citep{Blac48a,Inad69a}.} Moreover, \citet{McGa53a} has shown that \emph{any} such dominance relation can be obtained under the given conditions.\footnote{Improving on McGarvey's result, \citet{Stea59a} has shown that any tournament can be realized via the simple majority rule when the number of voters is at least two greater than the number of alternatives.}
The remaining condition for our characterization is a strengthening of IIA that prescribes that all choices must only depend on the dominance relation or, more generally, on pairwise choices.

\todo{this should be called binariness}
\begin{definition}
An SCF $f$ is \emph{based on pairwise choices} if $f(A,R)=f(A,R')$ for all $A\in \fone(X)$, $R,R'\in \mathcal{R}(X)^N$, and $a,b\in A$ such that $f(\{a,b\},R)=f(\{a,b\},R')$.
\end{definition}

We thus arrive at the following correspondence between tournament solutions and SCFs.

\begin{quote}
Let the number of agents be odd and all individual preference relations linear.
Then every SCF that is based on pairwise choices and satisfies neutrality, anonymity, and positive responsiveness corresponds to a tournament solution and vice versa.
\end{quote}

Consider for example the preference relations of the three voters given in \figref{fig:ccycle}. A majority of voters (two out of three) prefers $a$ to $b$. Another one prefers $b$ to $c$ and yet another one $c$ to $a$.

\begin{figure}[htb]
  \centering\figuresize
 $\begin{array}{ccc}
  P_1 & P_2 & P_3\\\midrule
  a & b & c\\
  b & c & a\\
  c & a & b\\
  \end{array}$\qquad\qquad  
  \def\nd{0.88em} 
  \def\ra{0.7em} 
  \begin{tikzpicture}[baseline=(current bounding box.center),node distance=\nd,vertex/.style={circle,draw,minimum size=2*\ra,inner sep=0pt}]
	\draw
    +(90:\nd+\ra) node[vertex] (a) {$a$} 
    +(210:\nd+\ra) node[vertex] (c)  {$c$}
    +(330:\nd+\ra) node[vertex] (b)  {$b$};
	\draw [-latex] (a) to (b);
	\draw [-latex] (b) to (c);
	\draw [-latex] (c) to (a);
 \end{tikzpicture}
 \caption{Condorcet's paradox \citep{Cond85a}. The left-hand side shows the individual preferences of three agents such that the pairwise majority relation, depicted on the right-hand side, is cyclic. Each column represents the strict preference relation of a voter, \eg $a\mathrel{P_1} b \mathrel{P_1} c$.}
 \label{fig:ccycle}
\end{figure}

Inspired by earlier work by \citet{Sen69a}, \citet{Bord76a} factorized the set equality of the WARP condition into two inclusion conditions, namely $\alpha$ and $\beta^+$. Accordingly, WARP is equivalent to the conjunction of $\alpha$, an inclusion \emph{contraction} condition, and $\beta^+$, an inclusion \emph{expansion} condition.

\begin{definition}
An SCF satisfies $\alpha$ if $B\cap f(A,R)\subseteq f(B,R)$ for all $A,B\in \fone(X)$ such that $B\subseteq A$ and $B\cap f(A,R)\ne \emptyset$.
\end{definition}

Intuitively, $\alpha$ means that if an alternative is chosen from some set of alternatives, then it will also be chosen from all subsets in which it is contained. Clearly, the corresponding statement in terms of set expansion (a chosen alternative will be chosen in all supersets) would be unduly strong as \emph{every} alternative is chosen in some singleton set. The intuition behind $\beta^+$ is that if alternative $a$ is chosen from some set that contains another alternative $b$, then it will also be chosen in all supersets, which contain $a$ and in which $b$ is chosen.

\begin{definition}
An SCF satisfies $\beta^+$ if $f(B,R) \subseteq B\cap f(A,R)$ for all $A,B\in \fone(X)$ such that $B\subseteq A$ and $B\cap f(A,R)\ne \emptyset$.
\end{definition}

Conditions $\alpha$ and $\beta^+$ are known as the strongest contraction and expansion consistency condition, respectively. It turns out that the mildest dose of contraction consistency gives rise to impossibility results that retain Arrow's spirit \citep{Sen77a}. Expansion consistency conditions, on the other hand, are much less restrictive. In fact, under the given conditions, $\beta^+$, the strongest expansion condition, and minimality characterize the top cycle $\tc$ \citep{Bord76a}. It therefore appears advisable to discard $\alpha$ and characterize SCFs using weakened versions of $\beta^+$. This has, for example, been achieved for the uncovered set, which is characterized by a weakening of $\beta^+$ called $\gamma$ and minimality \citep{Moul86a}.

Anonymity, neutrality, and positive responsiveness are elementary, undisputed conditions that are satisfied by all common SCFs. This leaves two avenues to generalize the correspondence above. First, when the number of agents is not odd or their preferences are not linear, the pairwise majority relation may not be complete; it can be represented by an oriented graph or so-called \emph{incomplete tournament}. While the definitions of many tournament solutions have been extended to this generalized setting, no uncontroversial extensions are known for some tournament solutions (see \charef{sec:conclusion}). Secondly, the pairwise choice condition is often replaced with the \emph{Condorcet condition}, which states that the Condorcet winner has to be the unique choice whenever it exists \citep{Cond85a}. This class includes all common tournament solutions as well as some SCFs that take into account more information than just pairwise comparisons, such as Kemeny's and Dodgson's SCFs \citep[see, \eg][]{Fish77a}.

\subsubsection{Variable Electorate}

In the previous section, we pointed out how to circumvent Arrow's impossibility by substituting the prohibitively strong WARP condition with weaker conditions. The essence of all these properties is to impose restrictions on SCFs based on changes in the set of feasible alternatives. Alternatively, one can focus on changes in the set of voters. A very natural consistency property with respect to the electorate was suggested independently by \citet{Smit73a} and \citet{Youn74a}. It states that all alternatives that are chosen simultaneously by two disjoint sets of voters should be precisely the alternatives chosen by the union of both sets of voters.\todo{is this definition formally correct? $N$ is fixed!}
\begin{definition}
An SCF $f$ is \emph{consistent} if $f(A,R\cup R')=f(A,R)\cap f(A,R')$ for all disjoint sets $N$ and $N'$, $R\in \mathcal{R}^N$, $R'\in\mathcal{R}^{N'}$, and $A\in \fone(X)$ such that $f(A,R)\cap f(A,R')\ne \emptyset$.
\end{definition}

\citet{Smit73a} and \citet{Youn75a}, again independently, showed that consistency characterizes so-called scoring rules. For the remainder of this section, we assume that individual preference relations are linear. 
Let a \emph{score function} $s:\mathbb{N}\times\mathbb{N}\rightarrow \mathbb{R}$ be a function that for a given number of alternatives $m$ and rank $r$ yields the score $s(r,m)$ of the $r$th-ranked alternative in a linear order. Every score function $s$ yields an scf $f_s$ by letting 
\[f_s(A,R)=\{ a\in A \mid S(a)\ge S(b) \text{ for all }b\in A\}\] 
where $S(a)=\sum_{i\in N} s(|\{b\in A\mid b \mathrel{P_i} a\}|+1,|A|)$.
$f_s$ will be called a \emph{simple scoring rule}.
Simple scoring rules are by far the most used voting rules in practice. Common simple scoring rules are \emph{plurality} ($s(r,m)$ equals $1$ if $r=1$ and $0$ otherwise), \emph{Borda} ($s(r,m)=m-r$), and \emph{anti-plurality} ($s(r,m)$ equals $0$ if $r=m$ and $1$ otherwise).

For an anonymous SCF $g$ and a simple scoring rule $f_s$, the composition of $f_s$ and $g$ is defined as
\[(f_s\circ g) (A,R)=\{ a\in g(A,R) \mid S(a)\ge S(b) \text { for all }b\in g(A,R)\}\]
where $S(a)$ is defined as above. That is, the scores of $f_s$ are used to break the ties of $g$.

\begin{definition}
An SCF $f$ is a \emph{scoring rule} if there is a sequence of simple scoring rules $s_1,\dots,s_k$ such that $f=f_{s_1}\circ \dots \circ f_{s_k}$.
\end{definition}

\begin{theorem}[\citealp{Smit73a,Youn75a}]
An SCF is a scoring rule if and only if it satisfies anonymity, neutrality, and consistency.
\end{theorem}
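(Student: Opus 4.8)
The plan is to establish the two implications separately. The converse direction --- that every scoring rule satisfies anonymity, neutrality, and consistency --- is routine, while the direct direction is the substantive part, for which I would follow the geometric argument of Smith and Young.

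For the converse direction, first consider a single simple scoring rule $f_s$. Anonymity and neutrality are immediate from the definition of the score $S(\cdot)$, since $S(a)$ depends only on the multiset of ranks assigned to $a$ and transforms equivariantly under relabelling of the alternatives. Consistency follows from additivity: for disjoint electorates $N,N'$ one has $S_{N\cup N'}=S_N+S_{N'}$, so any joint maximizer is a maximizer of the sum, and conversely a maximizer of the sum that ties with some joint maximizer $b$ must itself maximize both $S_N$ and $S_{N'}$ (the two ``reverse'' inequalities $S_N(b)\ge S_N(a)$, $S_{N'}(b)\ge S_{N'}(a)$ together with $S_N(a)+S_{N'}(a)\ge S_N(b)+S_{N'}(b)$ force equality); hence $f_s(A,R\cup R')=f_s(A,R)\cap f_s(A,R')$ whenever the right-hand side is non-empty. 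I would then lift this to an arbitrary scoring rule $f=f_{s_1}\circ\cdots\circ f_{s_k}$ by induction on $k$: writing $g=f_{s_2}\circ\cdots\circ f_{s_k}$, the SCF $g$ is anonymous and neutral and, by the induction hypothesis, consistent, so $f=f_{s_1}\circ g$ inherits anonymity and neutrality, and consistency follows by combining the consistency of $f_{s_1}$ with that of $g$, using that $g(A,R\cup R')=g(A,R)\cap g(A,R')$ when the latter is non-empty.

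For the direct direction, fix $m=|A|$. By anonymity and the linearity of individual preferences, the restriction of $f$ to $m$-element feasible sets is determined by a map on count vectors: encode a profile $R$ as $x(R)\in\mathbb{N}^{m!}$ whose $\sigma$-th coordinate counts the voters whose ranking of $A$ is $\sigma$, so that $f$ becomes a function $\mathbb{N}^{m!}\setminus\{0\}\to\fone(A)$. For each $a\in A$ set $K_a=\{x\mid a\in f(x)\}$. Consistency says exactly that $x,y\in K_a$ implies $f(x+y)=f(x)\cap f(y)\ni a$, so each $K_a$ is closed under addition and (taking $x=y$, iterated) under scaling by positive integers; thus $K_a$ is an additive subsemigroup of the orthant. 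Passing to the convex cones $\hat{K}_a\subseteq\mathbb{R}^{m!}$ generated by the $K_a$, the fact that $f(x)$ is always non-empty shows that the cones $\hat{K}_a$ cover the non-negative orthant. The crux --- and the step I expect to be the main obstacle --- is Young's lemma that these decision regions are polyhedral: there exist linear functionals $\ell_a$ on $\mathbb{R}^{m!}$ such that, on profiles with a unique winner, $a\in f(x)$ if and only if $\ell_a(x)\ge\ell_b(x)$ for all $b\in A$. Proving this amounts to showing that the boundary between $\hat{K}_a$ and $\hat{K}_b$ is a single hyperplane through the origin; this requires a genuine convexity argument --- using consistency to average a profile at which $a$ wins with one at which $b$ wins, deducing that ties propagate along segments, and combining this with the covering property and full-dimensionality to rule out a ``bent'' boundary --- and is not mere bookkeeping.

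Granting the lemma, neutrality finishes the characterization. The group $S_m$ acts simultaneously on $A$ and on the coordinate set of rankings (a permutation $\pi$ of $A$ sends the axis indexed by $\sigma$ to the one indexed by $\pi\circ\sigma$), and neutrality makes $f$ equivariant for this action, hence so is the family $(\ell_a)_{a\in A}$; this forces $\ell_a(x)=\sum_{\sigma}s\bigl(\mathrm{rank}_\sigma(a)\bigr)\,x_\sigma$ for a single score vector $s=s(\cdot,m)$, so that $f$ coincides with the simple scoring rule $f_s$ on the open region where $f_s$ has a unique winner. On the complementary lower-dimensional set, the restriction of $f$ to the $f_s$-tie-sets is still anonymous, neutral, and consistent on a sufficiently rich sub-domain, and an induction (on the number of alternatives, or on the number of linear pieces of the decision partition, with the degenerate cases $m\le 2$ treated by hand) shows it is itself a scoring rule $f_{s_2}\circ\cdots\circ f_{s_k}$; hence $f=f_s\circ f_{s_2}\circ\cdots\circ f_{s_k}$. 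Finally, since $m$ was arbitrary and score functions may depend on $m$, the rules obtained for the various feasible-set sizes assemble into a single sequence of score functions, exhibiting $f$ as a scoring rule and completing the proof.
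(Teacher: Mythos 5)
The paper does not actually prove this theorem: it is stated with attribution to Smith (1973) and Young (1975) and used as a black box, so there is no in-paper argument to compare yours against. Judged on its own terms, your outline reproduces the standard architecture of the Smith--Young proof. The converse direction is essentially complete: additivity of the score vectors over disjoint electorates gives consistency of a simple scoring rule by exactly the two-inequality argument you give, and the induction showing that $f_{s_1}\circ g$ inherits consistency from $g$ (first use consistency of $g$ to identify $g(A,R\cup R')$ with $g(A,R)\cap g(A,R')$, then rerun the additivity argument inside that set) goes through.

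The direct direction, however, is a road map rather than a proof, and the two steps that carry essentially all of the content are only named. First, the polyhedrality lemma --- that the cones $\hat{K}_a$ are pairwise separated by hyperplanes through the origin --- is the heart of Young's paper. Closure of $K_a$ under addition only yields convexity of the generated cone; passing from ``common boundary of full-dimensional convex cones covering the orthant'' to ``a single hyperplane'' requires a further use of consistency (ties propagate along segments) together with a dimension count, and the behaviour on the boundary of the rational cone is delicate. You correctly flag this as the crux, but flagging it is not proving it. Second, the descent into the tie-sets, which is what produces the composite $f_{s_1}\circ\cdots\circ f_{s_k}$ rather than a single simple scoring rule, needs a verified induction: you must check that the restriction of $f$ to a tie-region of $f_{s_1}$ is again anonymous, neutral, and consistent on a domain closed under addition, which is where Smith's separability formulation does real work. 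Finally, a small but genuine point: the paper's definition of a scoring rule demands one finite sequence $s_1,\dots,s_k$ uniform over all feasible-set sizes $m$, whereas your argument produces a depth $k_m$ for each $m$ separately; you would need to pad with constant score functions and argue that the depths are uniformly bounded, or observe that the classical statements fix the feasible set. None of this is a wrong turn --- the route is the correct one --- but as written the proposal leaves the two main lemmas unproved.
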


One of the earliest advocates of scoring rules was the Chevalier de Borda, whose dispute with the Marquis de Condorcet in the French Royal Academy of Sciences is often regarded the birthplace of social choice theory \citep[see, \eg][]{Blac58a,Youn88a,Youn95a}. Interestingly, the disagreement between Borda and Condorcet even prevails in contemporary social choice theory and is manifested in the following theorem.
\begin{theorem}[\citealp{YoLe78a}]
There is no SCF that satisfies the Condorcet condition and consistency.
\end{theorem}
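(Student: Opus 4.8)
The plan is to derive a contradiction on a three-element set of alternatives $A=\{a,b,c\}$, by playing the Condorcet condition against consistency on three carefully chosen profiles. Throughout I use the standing symmetry assumptions — made explicit in the scoring-rule characterization just above — that SCFs are anonymous and neutral; an equivalent route is to invoke that characterization directly, reducing the claim to ``no scoring rule satisfies the Condorcet condition,'' but the direct argument below is shorter and self-contained. Let $f$ be an SCF satisfying both conditions; I aim to exhibit specific profiles forcing $f$ to contradict itself.

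First, fix the cyclic (Latin-square) profile $R$ on $A$ with one voter each for $a\succ b\succ c$, $b\succ c\succ a$, and $c\succ a\succ b$. Applying the $3$-cycle $\sigma=(a\,b\,c)$ to every ballot merely reassigns the ballots among the voters, so neutrality together with anonymity gives $\sigma(f(A,R))=f(A,R)$; since the only nonempty $\sigma$-invariant subset of $A$ is $A$ itself, $f(A,R)=\{a,b,c\}$. Next, fix the two-voter profile $Q$ on $A$ consisting of $a\succ b\succ c$ and $b\succ a\succ c$. This profile is invariant under the transposition $(a\,b)$, so by the same reasoning $f(A,Q)$ is $(a\,b)$-invariant; in particular $a\in f(A,Q)$ if and only if $b\in f(A,Q)$. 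Note that $Q$ has no Condorcet winner ($a$ and $b$ are tied), so this is genuine information, not something dictated by the Condorcet condition.

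The crux is the five-voter profile $R\cup Q$ on the (disjoint) combined electorate. A direct pairwise tally of its five ballots shows that $a$ beats $b$ by $3{:}2$ and $a$ beats $c$ by $3{:}2$, so $a$ is the Condorcet winner of $R\cup Q$; hence $f(A,R\cup Q)=\{a\}$ by the Condorcet condition. On the other hand, consistency applies, since $f(A,R)\cap f(A,Q)=\{a,b,c\}\cap f(A,Q)=f(A,Q)\neq\emptyset$, and it yields $f(A,R\cup Q)=f(A,R)\cap f(A,Q)=f(A,Q)$. Combining the two displays gives $f(A,Q)=\{a\}$, which contradicts the fact that $f(A,Q)$ must be invariant under $(a\,b)$. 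Therefore no SCF satisfies both the Condorcet condition and consistency.

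The routine part is the pairwise tallying of $R\cup Q$ and the symmetry bookkeeping for $R$ and $Q$. The one genuinely delicate point — where I would spend the most care — is the choice of $Q$: it must be small enough that its own symmetry pins down the possible shapes of $f(A,Q)$, it must itself lack a Condorcet winner (otherwise the conclusion $f(A,Q)=\{a\}$ would be harmless), and yet $R\cup Q$ must acquire a \emph{unique} Condorcet winner whose identity is not fixed by the symmetry of $Q$. The two-voter profile above threads this needle; variants such as adding more balanced voters work too, but the $(a\,b)$-symmetric choice is what makes the clash immediate.
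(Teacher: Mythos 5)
The paper states this result only as a citation to Young and Levenglick and supplies no proof of its own, so there is nothing internal to compare your argument against. Judged on its own terms, your argument is the standard one for this incompatibility and the computations check out: neutrality plus anonymity force the cyclic profile $R$ to select all of $\{a,b,c\}$ and force $f(A,Q)$ to be invariant under the transposition $(a\,b)$; the five-voter union does have $a$ as Condorcet winner by $3{:}2$ margins against both $b$ and $c$; and consistency applies because $f(A,R)\cap f(A,Q)=f(A,Q)\neq\emptyset$. The resulting clash $f(A,Q)=\{a\}$ versus $(a\,b)$-invariance is a genuine contradiction.

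The one point you should not leave in a parenthetical is the mismatch of hypotheses. The theorem as stated assumes only the Condorcet condition and consistency, and the theorem immediately preceding it lists anonymity and neutrality as separate, explicit axioms --- so they are not silently in force here, and as written you prove a strictly weaker statement. Anonymity is arguably innocuous (in Young and Levenglick's framework a profile is a multiset of ballots, so it is built in, and the paper's variable-electorate formulation of consistency is hard to read any other way), but neutrality does real work in your proof: it is the only thing pinning down $f$ on $R$ and on $Q$. If you want the stated generality, neutrality can be dispensed with by a short case analysis instead of a symmetry argument: whichever alternative $x$ lies in $f(A,C)$ for the cyclic profile $C$, append a profile $D$ whose Condorcet winner is $x$ but whose margins are skewed so that the union acquires a different Condorcet winner. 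For instance, if $a\in f(A,C)$, take three disjoint copies of $C$ (so that consistency gives $f(A,3C)=f(A,C)\ni a$) together with $D$ consisting of three voters $a\succ c\succ b$ and one voter $c\succ a\succ b$; then $D$ has Condorcet winner $a$, so consistency forces $f(A,3C\cup D)=\{a\}$, while in $3C\cup D$ the alternative $c$ beats both $a$ and $b$ by $7{:}6$ and is the Condorcet winner --- a contradiction. The other cases follow by cyclically relabelling $D$. Either add the symmetry hypotheses to the statement you are proving, or replace the symmetry step by an argument of this kind.
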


\citet{Lasl96a} further deepens the dichotomy between consistency with respect to the set of alternatives and the electorate by showing that no rank-based Paretian SCF (and thus no scoring rule) satisfies composition-consistency. Moreover, \citet{Lasl00a} contrasts the characterization of Borda's rule in terms of a variable electorate \citep{Smit73a,Youn74a} with a characterization of a generalization of the bipartisan set in terms of a variable set of alternatives.


\subsection{Adversarial Decision-Making}\label{sec:adversarial}

One of the oldest endeavors in \emph{non-cooperative game theory} is to investigate the optimal course of action in strictly competitive situations with two agents. The characteristic difficulty of such situations is that the optimality of an action depends on the action chosen by the other player.
Game theory has numerous applications in economics, biology, philosophy, and computer science.


Let $X$ be a universal set of actions. A \emph{two-player zero-sum game} $\Gamma=(A_1,A_2,u)$ is a tuple consisting of a finite non-empty subset~$A_1\in \fone(X)$ of feasible actions of player~$1$, a finite non-empty set~$A_2\in \fone(X)$ of feasible actions of player~$2$, and a utility function $u:X\times X\rightarrow\mathbb{R}$. 
Both players are assumed to simultaneously choose one of their actions, $a_1\in A_1$ and $a_2\in A_2$, in order to maximize their utility, which is given by  $u(a_1,a_2)$ in the case of player~$1$ and $-u(a_1,a_2)$ in the case of player~$2$. 
It is convenient to represent a zero-sum game as a matrix $U=(U_{i,j})_{i\in A_1, j\in A_2}$ with $U_{i,j}=u(i,j)$. 
The set of all utility functions on $X$ will be denoted by $\mathcal{U}(X)$.

In this section, we will be concerned with \emph{set-valued solution concepts}, \ie solution concepts that identify subsets of preferable actions for both players.
\begin{definition}
An \emph{adversarial game-theoretic solution concept (AGS)} is a function $f: \fone(X) \times \fone(X) \times \mathcal{U}(X) \rightarrow \fone(X)\times \fone(X)$ such that, for every zero-sum game $(A_1,A_2,u)$, $B_1\subseteq A_1$ and $B_2\subseteq A_2$ where $(B_1,B_2)=f(A_1,A_2,u)$.
\end{definition}

A zero-sum game $\Gamma=(A_1,A_2,u)$ is called \emph{symmetric} if $A_1=A_2$ and $u(a,b)=-u(b,a)$ for all $a,b\in A_1$.  In other words, $\Gamma$ is symmetric if and only if $A_1=A_2$ and $U$ is \emph{skew symmetric}, \ie $U^T=-U$.
In the following, we will restrict our attention to so-called tournament games, a subclass of symmetric zero-sum games that is characterized by the fact that players may only win, lose, or draw and that both players get the same payoff if and only if they play the same action \citep[see, \eg][]{LLL93b,FiRy95a}.

\begin{definition}
A symmetric zero-sum game $\Gamma=(A,A,u)$ is a \emph{tournament game} if the domain of $u$ is $\{-1,0,1\}$ and, for all $a,b\in A$, $u(a,b)=0$ if and only if $a=b$. 
\end{definition}
Even though tournament games appear to be rather restricted, they are rich enough to model most of the characteristic peculiarities of zero-sum games.

Two games $\Gamma=(A_1,A_2,u)$ and $\Gamma'=(A_1',A_2',u')$ are isomorphic if there is a pair of bijective mappings $\pi_1:A_1\rightarrow A_1'$ and $\pi_2:A_2\rightarrow A_2'$ such that $u(\pi_1(a_1),\pi_2(a_2))=u'(a_1,a_2)$ for all $a_1\in A_1$ and $a_2\in A_2$. It is very natural to assume that an AGS is independent of infeasible actions and stable with respect to isomorphisms. We will call such an AGS \emph{independent}.
\begin{definition}
An AGS $f$ is \emph{independent} if 
\begin{enumerate}[label=\textit{(\roman*)}]
\item $f(\Gamma)=f(\Gamma')$ for all games $\Gamma=(A_1,A_2,u)$ and $\Gamma'=(A_1,A_2,u')$ such that $u(a_1,a_2)=u'(a_1,a_2)$ for all $a_1\in A_1$, $a_2\in A_2$, and
\item $f((\pi_1(A_1),(\pi_2(A_2),u'))=(\pi_1(f(\Gamma),\pi_2(f(\Gamma)))$ for all games $\Gamma=(A_1,A_2,u)$ and $\Gamma'=(A_1',A_2',u')$, and isomorphisms $\pi_1:A_1\rightarrow A_1'$ and $\pi_2:A_2\rightarrow A_2'$ of $\Gamma$ and $\Gamma'$.
\end{enumerate}
\end{definition}

While not as natural as the previous condition, there are reasons to assume that an AGS should give identical recommendations to both players in a symmetric game. 
\begin{definition}
An AGS $f$ is \emph{symmetric} if for all symmetric games $\Gamma$, $B_1=B_2$ where $(B_1,B_2)=f(\Gamma)$. 
\end{definition}

There is a straightforward correspondence between tournament games and tournaments as the utility matrix of any tournament game may be interpreted as the adjacency matrix of a tournament graph. For a given tournament game $(A,A,u)$, a tournament $(A,{\succ})$ can be constructed by letting, for all $a,b\in A$, $a\succ b$ if $u(a,b)=1$ and $b\succ a$ if $u(a,b)=-1$. We thus have a direct relationship between tournament solutions and symmetric and independent AGSs in the case of tournament games.

\begin{quote}
For tournament games, every symmetric and independent AGS corresponds to a tournament solution and vice versa.
\end{quote}

As an example, \figref{fig:rps} shows the well-known tournament game Rock-Paper-Scissors and the corresponding tournament, a 3-cycle.

\begin{figure}[htb]
  \centering\figuresize
  $\begin{array}{rcrrrc}
& &  R & P & S & \\[1ex]
 R & \multirow{3}{1ex}{$\left(\begin{array}{r}\phantom{1}\\\phantom{1}\\\phantom{1}\end{array}\right.$} & 0  & -1 & +1 & \multirow{3}{0ex}{$\left)\begin{array}{l}\phantom{1}\\\phantom{1}\\\phantom{1}\end{array}\right.$}\\
 P & & +1 &  0 & -1 & \\
 S & & -1 & +1 &  0 & 
  \end{array}$
  \qquad\qquad  
  \def\nd{0.88em} 
  \def\ra{0.7em} 
  \begin{tikzpicture}[baseline=(current bounding box.center),node distance=\nd,vertex/.style={circle,draw,minimum size=2*\ra,inner sep=0pt}]
	\draw
    +(90:\nd+\ra) node[vertex] (r) {$R$} 
    +(210:\nd+\ra) node[vertex] (p)  {$P$}
    +(330:\nd+\ra) node[vertex] (s)  {$S$};
	\draw [-latex] (r) to (s);
	\draw [-latex] (s) to (p);
	\draw [-latex] (p) to (r);
 \end{tikzpicture}
 \caption{Tournament game Rock-Paper-Scissors and its corresponding tournament graph}
 \label{fig:rps}
\end{figure}

A \emph{best response} is an action that maximizes a player's utility for a given action of the opponent. Action $a$ is a \emph{better response} than action $b$ for some action of the opponent, if $a$ yields more utility than $b$.
In tournament games, where actions correspond to alternatives, the set of best responses for a given action simply consists of the dominators of the corresponding alternative. An action that is the unique best response to all actions of the other player is called a \emph{dominant action}. Clearly, such an action should be played without any reservations. In tournament games, any such action corresponds to a Condorcet winner of the corresponding tournament. Interestingly, many other solution concepts that have been developed independently in game theory are equivalent to certain tournament solutions within the restricted class of tournament games (see \tabref{tbl:gtconcepts}).

For example, action $a$ is \emph{dominated} if 
there exists another action $b$ such that, for all actions of the opponent, $b$ is always a better response then $a$. In tournament games, an action is dominated if it is never a best response, which is only possible if it is the Condorcet loser of the corresponding tournament. It may be the case that removing a dominated action from consideration renders another action dominated. This enables the definition of an \emph{iterative process} in which dominated actions are deleted one after another. The resulting set of actions is the set of iterated undominated or \emph{rationalizable} actions. 

Action $a$ is \emph{weakly dominated} if there exists another action $b$ such that, for all actions of the opponent, $a$ is never a better response than $b$ and $b$ is a better response at least once. The set of weakly undominated actions corresponds to the uncovered set. As in the case of strict dominance, one can define an iterative process of deleting weakly dominated actions. In general games, the resulting set of actions depends on the order of eliminations. In tournament games, however, this process invariably leads to the same set of actions, which corresponds to the iterated uncovered set. 

Another solution concept in game theory is that of a \emph{CURB (``closed under rational behavior'') set} \citep{BaWe91a}. A CURB set if a set of actions for each player such that each set contains all best responses to actions within the other player's set.\footnote{Originally, CURB sets are defined as sets of mixed, \ie randomized, strategies. However, both definitions are equivalent in tournament games.} Typically, one is interested in inclusion-minimal CURB sets. Every tournament game contains a unique minimal CURB set which coincides with the top cycle of the tournament. 

\citet{Shap64a} proposed a similar set-valued solution concept.
A weak generalized saddle point (WGSP) is a set of actions for each player such that every action not contained in these sets is weakly dominated when restricting the opponent's actions to those included in his set. An inclusion-minimal WGSP is called a weak saddle and is equivalent to the minimal covering set of the corresponding tournament. This equivalence is particularly astounding as it was discovered eight years after the minimal covering set was proposed and 43 years after the weak saddle was first mentioned \citep{DuLe96a}. 

Finally, the bipartisan set is a tournament solution whose definition directly refers to the corresponding tournament game. A (mixed) strategy is a probability distribution over the actions of one player. A pair of strategies is a \emph{Nash equilibrium} if neither of the two players can increase his expected utility by deviating from his strategy \citep{Nash51a}. Thus, a Nash equilibrium is a mutual mixed best response. Tournament games contain a unique Nash equilibrium and the bipartisan set is defined as the set of all alternatives that correspond to actions that are played with positive probability in the Nash equilibrium.\footnote{Mutual best response \emph{actions} constitute a \emph{pure} Nash equilibrium.  In tournament games, a pure Nash equilibrium exists if and only if the tournament contains a Condorcet winner.}

\begin{table}[htb]
\centering\figuresize
\begin{tabular}{ll}
\toprule
Tournament solution & Game-theoretic concept\\
\midrule
Condorcet winner & Dominant action/pure Nash equilibrium\\
Condorcet non-losers ($\cnl$) & Undominated actions\\ 
Uncovered set ($\uc$) & Weakly undominated actions\\
Iterated Condorcet non-losers ($\cnl^\infty$) & Rationalizability\\
Iterated uncovered set ($\uc^\infty$) & Iterated weakly undominated actions\\
Top cycle ($\tc$) & Minimal CURB set\\ 
Minimal covering set ($\mc$) & Weak saddle\\ 
Bipartisan set ($\bp$) & Support of Nash equilibrium\\ 
\bottomrule
\end{tabular}
\caption{Tournament solutions and their game-theoretic counterparts}
\label{tbl:gtconcepts}
\end{table}

\subsection{Coalitional Decision-Making}\label{sec:coalitional}
\todo{use hedonic game example from Barbera and Gerber (2007)!}

\emph{Coalitional game theory} (or \emph{cooperative game theory}) studies strategic settings in which players can make binding commitments, form coalitions, and thus correlate their actions. Here, we will consider what is known as the case of \emph{non-transferable utility (NTU)}, \ie there is no possibility of side-payments between players.

A \emph{finite NTU game} is a tuple $(N,H,V)$, where $N=\{1,\dots,n\}$ is a set of players, $H\in\fone(\mathbb R^N)$ is a set of outcomes, and~$V:\fone(N)\rightarrow \fone(H)$ is a characteristic function.
Each outcome denotes how much utility each agent derives from this particular outcome. The \emph{characteristic function} describes the coalitional effectivity, \ie it yields which outcomes each coalition can enforce to come about. An important question is which outcomes are to be expected in a given NTU game when players are assumed to maximize their utility. Differing answers to this question have been given in the form of \emph{solution concepts} that map NTU games to sets of outcomes.  Some of these concepts (\eg the core or von Neumann-Morgenstern stable sets) are defined on the basis of a binary dominance relation that is typically defined in terms of coalitional effectivity and individual preferences. 

\begin{definition}
Let $(N,H,V)$ be a finite NTU game and $a,b\in H$ two outcomes. $a$ \emph{dominates} $b$ if there exists a coalition $C\in \fone(N)$ such that $C$ is effective for $a$, \ie $a\in V(C)$, and all members of $C$ strictly prefer $a$ over $b$, \ie $a(i)> b(i)$ for all $i\in C$. 
\end{definition}
Obviously this dominance relation is irreflexive since no outcome dominates itself. However, it may be the case that two outcomes dominate each other (via different coalitions). In other words, the dominance relation does not have to be asymmetric and thus differs from the dominance relations we have considered so far. Nevertheless, the specific structural properties of the dominance relation are of great importance when reasoning about coalitional games. It turns out that \emph{every} irreflexive relation on a finite set of alternatives can be obtained as the dominance relation of some finite coalitional NTU game.

\begin{theorem}[\citealp{BrHa09a}]\label{thm:ntu}
Let~$R$ be an irreflexive relation on a {finite} set of outcomes. Then, $R$ is induced as the dominance relation of some finite NTU game.
\end{theorem}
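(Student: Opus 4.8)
The plan is to give an explicit construction. We may take the finite set of outcomes to be $\{1,\dots,m\}$, so that $R\subseteq\{1,\dots,m\}^2$ with $(k,k)\notin R$ for every $k$; note $m\ge 1$, since otherwise the outcome set $H$ of any NTU game would have to be empty. Put $N=\{1,\dots,m\}$ (one player per outcome), and take as the set of outcomes of the game $H=\{o_1,\dots,o_m\}\subseteq\mathbb R^N$, where $o_k$ is the vector with $o_k(k)=-1$ and $o_k(j)=0$ for all $j\ne k$. Thus player $j$ assigns utility $-1$ to outcome $o_j$ and $0$ to every other outcome; in particular the $o_k$ are pairwise distinct, and the only strict preferences held by player $j$ are those of the form ``$o$ is strictly preferred to $o_j$'' for $o\ne o_j$. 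The single observation that drives the whole argument is this: a coalition $C$ can dominate $o_b$ via $o_a$ only if every member of $C$ strictly prefers $o_a$ to $o_b$, and by the previous sentence this forces every member of $C$ to be the player $b$. Hence only the singleton coalitions can ever dominate anything, and $\{j\}$ can dominate only the outcome $o_j$ (namely via any $o_a\in V(\{j\})$ with $a\ne j$).

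It then remains to choose the characteristic function $V$ on $\fone(N)$ so that the singletons realise exactly the pairs of $R$. Let $Y=\{\,j\mid (i,j)\in R\text{ for some }i\,\}$. I would define $V$ as follows: for a singleton $\{j\}$ with $j\in Y$, let $V(\{j\})=\{\,o_i\mid (i,j)\in R\,\}$, which is a non-empty subset of $H$ since $j\in Y$; for a singleton $\{j\}$ with $j\notin Y$, let $V(\{j\})=\{o_j\}$; and for every coalition $C$ with $|C|\ge 2$, let $V(C)=\{o_1\}$. This is a well-defined map $\fone(N)\to\fone(H)$, and I claim the dominance relation of $(N,H,V)$ is exactly $\{(o_a,o_b)\mid (a,b)\in R\}$, i.e.\ equals $R$ under the identification $k\leftrightarrow o_k$. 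One inclusion is immediate: if $(a,b)\in R$ then $b\in Y$ and $o_a\in V(\{b\})$, and since $a\ne b$ by irreflexivity we have $o_a(b)=0>-1=o_b(b)$, so player $b$ strictly prefers $o_a$ to $o_b$ and $\{b\}$ witnesses that $o_a$ dominates $o_b$. For the converse, suppose $o_a$ dominates $o_b$ through some coalition $C$, so $o_a\in V(C)$, every member of $C$ strictly prefers $o_a$ to $o_b$, and in particular $a\ne b$. If $|C|\ge 2$, then $o_a\in V(C)=\{o_1\}$ forces $a=1$, and two distinct players $j_1,j_2\in C$ would both have to strictly prefer $o_1$ to $o_b$, forcing $b=j_1$ and $b=j_2$ --- impossible. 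If $C=\{c\}$ with $c\notin Y$, then $o_a\in V(\{c\})=\{o_c\}$ forces $a=c$, and player $c$ would have to strictly prefer $o_c$ to $o_b$, which is impossible since $o_c$ is player $c$'s unique worst outcome. Finally, if $C=\{c\}$ with $c\in Y$, then $o_a\in V(\{c\})$ gives $(a,c)\in R$, hence $a\ne c$ and $o_a(c)=0$; since player $c$ strictly prefers $o_a$ to $o_b$ we must have $o_b(c)=-1$, i.e.\ $b=c$, and so $(a,b)=(a,c)\in R$.

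The one genuinely delicate point --- and the step to be careful about --- is the totality of $V$: the definition of an NTU game forces \emph{every} coalition to be effective for at least one outcome, and one must arrange that the ``leftover'' coalitions (the singletons $\{j\}$ with $j\notin Y$, and all coalitions of size at least two) do not thereby dominate anything they should not. The construction sidesteps this by declaring each such coalition effective only for an outcome that no member of it strictly prefers to any other outcome: for a singleton $\{c\}$ this is $o_c$, player $c$'s own least-preferred outcome, while for coalitions of size at least two any outcome works, since two distinct players never agree on a strict preference between two outcomes. Everything else is a routine verification, and since $N$ and $H$ are both finite (of size $m$) this yields a finite NTU game inducing $R$, as required.
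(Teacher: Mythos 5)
Your construction is correct and complete: the ``one player per outcome'' device, in which each player's only strict preference is that every other outcome beats her own designated outcome, does force every dominating coalition to be the singleton $\{b\}$ when $o_b$ is the dominated outcome, and your case analysis of the characteristic function (including the care taken over the totality of $V$ on singletons outside $Y$ and on coalitions of size at least two) closes every gap. Note that the paper itself gives no proof of this theorem---it is imported from Brandt and Harrenstein (2009)---so there is nothing internal to compare against; your argument is essentially the canonical construction one would expect in that source, and it stands on its own as a valid proof.
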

This result is very similar in spirit to that of \citet{McGa53a} who has shown that any asymmetric dominance relation can be obtained via pairwise majority voting (see \secref{sec:varalt}). In general, \emph{cooperative majority voting} can be seen as the special case of finite NTU games in which majorities are universally effective and all other coalitions are impotent \citep[see \eg][]{Schw86a}.

By now it has become obvious that all solution concepts studied in this \doc, be their roots in social choice theory, non-cooperative game theory, or in cooperative game theory, have to deal with what is essentially the same problem: to come to grips with a possibly intransitive dominance relation. Each of them incorporates a different intuition and approaches the issue from a different angle. In order to apply tournament solutions as solution concepts for finite NTU games, however, we need to bridge the gap between asymmetric and complete dominance relations on the one hand, and irreflexive dominance relations on the other. This can be achieved either by extending the definitions of tournament solutions to irreflexive relations or by identifying reasonable subclasses of finite NTU games (other than cooperative majority voting) that yield asymmetric and complete dominance relations. Both approaches are left as future work.
\todo{strong simple games?}

}

\begin{savequote}
\sffamily
There is every reason to expect that the various social sciences will serve as incentives for the development of great new branches of mathematics and that some day the theoretical social scientist will have to know more mathematics than the physicist needs to know today.
\qauthor{J.~G.~Kemeny, 1959}
\end{savequote}

\extra{\section{Conclusions and Open Problems}}
\extrapaper{\section{Conclusion}}
\label{sec:conclusion}

We proposed a unifying treatment of tournament solutions based on maximal qualified subsets and minimal stable sets. 
Given the results of \charef{sec:minstablesets} and \charef{sec:teq}, a central role in the theory of tournament solutions may be ascribed to Conjecture~\ref{con:teq}, a statement of considerable mathematical depth. Conjecture~\ref{con:teq} has a number of appealing consequences on minimal stable sets, some of which have been proved already. 

\begin{enumerate}[label=\textit{(\roman*)}]
\item Every tournament $T$ admits a unique \emph{minimal dominant set} $\tc(T)$ (as shown by~\citealp{Good71a}).
$\tc$ satisfies all basic properties and is the finest solution concept satisfying \ssp and $\cnl$-exclusivity.
\item Every tournament $T$ admits a unique \emph{minimal covering set} $\mc(T)$ (as shown by~\citealp{Dutt88a}). 
$\mc$ satisfies all basic properties and is the finest solution concept satisfying \ssp and $\uc$-exclusivity.
\item Every tournament $T$ admits a unique \emph{minimal extending set} $\me(T)$ (open problem).
$\me$ satisfies all basic properties and is the finest solution concept satisfying \ssp and $\ba$-exclusivity.
\item Every tournament $T$ admits a unique \emph{minimal $\teq$-retentive set} $\teq(T)$ (open problem).
$\teq$ satisfies all basic properties and is the finest solution concept satisfying retentiveness and the finest solution concept $S$ such that $S$ satisfies \ssp and, for all tournaments~$T=(A,\succ)$, $S(A)=A\setminus \{a\}$ only if $a\not\in S(\dom(b))$ for every $b\in A$. 
\item The following inclusion relationships hold: $\teq\subseteq\me\subseteq\mc\subseteq\tc$ and $\me\subseteq \ba$.\footnote{A consequence of these inclusions is that deciding whether an alternative is contained in the minimal extending set of a tournament is NP-hard. \extrapaper{This follows from a proof by \citet{BFHM09a}, which establishes hardness of all solution concepts that are sandwiched between $\ba$ and $\teq$.}}
\end{enumerate}

Conjecture~\ref{con:me} is a weaker version of Conjecture~\ref{con:teq}, which implies all of the above statements except those that involve $\teq$.


Table~\ref{tbl:comparison} and Figure~\ref{fig:sets} summarize the properties and set-theoretic relationships of the considered tournament solutions, respectively.

\extra{
\pagebreak
Many challenging open problems remain. Among them are the following:

\paragraph{Stability and retentiveness.}
Prove or disprove that no tournament admits two disjoint $S_{\mathcal{M}^*_4}$-stable sets.
Prove or disprove that no tournament admits two disjoint extending sets (\conref{con:me}).
Prove or disprove that no tournament admits two disjoint $\teq$-retentive sets (\conref{con:teq}). Investigate $S^\#_{\mathcal{M}^*}$ and $\mcs{S}^\#_{\mathcal{M}^*}$. Study $\mr{S}$ for solution concepts $S$ other than $\teq$.

\paragraph{Generalization of tournament solutions.}
The definitions of most of the concepts considered in this \doc have been generalized to incomplete tournaments \citep{DuLa99a,PeSu99a}. \citet{BrFi08b} and \citet{BFH09b} investigate whether and how these generalizations affect the computational complexity. For some concepts ($\ba$, $\me$, and $\teq$) no uncontroversial generalization is known. Ideally one would want to extend the definition to arbitrary binary relations.

\paragraph{Computational aspects.}
Is there a simpler method for computing a subset of $\mc$ than solving a linear feasibility problem?
Is there a purely combinatorial algorithm for computing $\mc$ (one that does not rely on linear feasibility)? How hard is it to compute $S_{\mathcal{M}_k}$? Is it possible to compute a subset of $\me$ or $\teq$ in polynomial time? Pinpoint the complexity of $\me$ and $\teq$ (the best upper bounds are $\Pi_2^p$ and PSPACE, respectively). Is deciding membership in $\mc$ (or $\uc^\infty$) \tczero-hard. Is deciding membership in $\bp$ or $\mc$ P-complete in tournaments? Is there a linear-time algorithm for computing $\uc$? 

\bigskip

Apart from these technical issues, it remains to be seen which other application areas (besides the ones already mentioned in \charef{sec:applications}) will be found for tournament solutions. It is to be expected that the problem of identifying the ``best'' elements according to some binary relation arises in numerous contexts and various fields. 

}

\begin{table}[htb]
\begin{minipage}{\textwidth}
\centering\footnotesize
\begin{tabular}{lllcccccc}
\toprule
 \multicolumn{2}{l}{Solution Concept} & Origin & \mon & \iua & \wsp & \ssp & \com & \irr \\
\midrule
$S_{\mathcal{M}_2}$ & $(\cnl)$ & & \checkmark & \checkmark & \checkmark & -- & -- & -- \\
$S_{\mathcal{M}}$ & $(\uc)$ & \citet{Fish77a,Mill80a}  & \checkmark & -- & \checkmark & -- & \checkmark & -- \\
$S_{\mathcal{M}^*}$ & $(\ba)$ & \citet{Bank85a}  & \checkmark & -- & \checkmark & -- & \checkmark & \checkmark \\
$\ms{S}_{\mathcal{M}_2}$ & $(\tc)$ & \citet{Good71a,Smit73a}  & \checkmark & \checkmark & \checkmark & \checkmark & -- & -- \\
$\ms{S}_{\mathcal{M}}$ & $(\mc)$ & \citet{Dutt88a}  & \checkmark & \checkmark & \checkmark & \checkmark & \checkmark & -- \\
$\ms{S}_{\mathcal{M}^*}$ & $(\me)$ & 
  & \checkmark$^\text{\it a}$ & \checkmark$^\text{\it a}$ & \checkmark\footnote{This statement relies on Conjecture~\ref{con:me}.} & \checkmark$^\text{\it a}$ & \checkmark & \checkmark \\
$\ms{\teq}$ & $(\teq)$ & \citet{Schw90a} & \checkmark\footnote{This statement relies on Conjecture~\ref{con:teq}.} & \checkmark$^\text{\it b}$ & \checkmark$^\text{\it b}$ & \checkmark$^\text{\it b}$ & \checkmark & \checkmark \\
\midrule
$S^\#_{\mathcal{M}}$ & $(\co)$ & \citet{Cope51a}  & \checkmark & -- & -- & -- & -- & -- \\
$\mcs{S}^\#_{\mathcal{M}}$ & $(\bp)$ & \citet{LLL93b}  & \checkmark & \checkmark & \checkmark & \checkmark & \checkmark & -- \\
\bottomrule
\end{tabular}
\end{minipage}
\caption{Properties of solution concepts
(\mon: monotonicity,
\iua: independence of unchosen alternatives,
\wsp: weak superset property,
\ssp: strong superset property,
\com: composition-consistency,
\irr: irregularity).
See \citet{Lasl97a} for all results not shown in this paper.
}
\label{tbl:comparison}
\end{table}

\begin{figure}[htb]
  \centering\footnotesize
  \newcommand{\sw}{32em} 
  \newcommand{\sd}{3em} 
  \newcommand{\ra}{0.618} 
\begin{tikzpicture}
\draw (0,0) node[ellipse, draw, minimum width=\sw, minimum height=\ra*\sw] (CNL) {};
\draw (0,0) node[ellipse, draw, minimum width=\sw-\sd, minimum height=\ra*\sw-\sd] (TC) {};
\draw (0,0) node[ellipse, draw, minimum width=\sw-2*\sd, minimum height=\ra*\sw-2*\sd] (UC) {};
\draw (-\sd,0) node[ellipse, draw, minimum width=\sw-5*\sd, minimum height=\ra*\sw-3*\sd] (MC) {};
\draw (\sd,0) node[ellipse, draw, minimum width=\sw-5*\sd, minimum height=\ra*\sw-3*\sd] (BA) {};
\draw (0,0) node[ellipse, draw, minimum width=\sw-8*\sd, minimum height=\ra*\sw-4*\sd] (ME) {};
\draw (0,0) node[ellipse, draw, minimum width=\sw-9*\sd, minimum height=\ra*\sw-5*\sd] (TEQ) {$\teq$};

\draw[text=black] (0,-0.5*\ra*\sw) node[anchor=south](cnl){$\cnl$};
\draw[text=black] (0,-0.5*\ra*\sw+0.5*\sd) node[anchor=south](tc){$\tc$};
\draw[text=black] (0,-0.5*\ra*\sw+\sd) node[anchor=south](uc){$\uc$};
\draw[text=black] (-0.5*\sw+2*\sd,0) node[anchor=west](mc){$\mc$};
\draw[text=black] (0.5*\sw-2*\sd,0) node[anchor=east](ba){$\ba$};
\draw[text=black] (0,-0.5*\ra*\sw+2*\sd) node[anchor=south](me){$\me$};
\end{tikzpicture}
 \caption{Set-theoretic relationships between qualitative tournament solutions.
$\ba$ and $\mc$ are not included in each other, but they always intersect. 
The inclusion of $\teq$ in $\me$ relies on Conjecture~\ref{con:teq} and that of $\me$ in $\mc$ on Conjecture~\ref{con:me} (which is implied by Conjecture~\ref{con:teq}). $\co$ is contained in $\uc$ but may be disjoint from $\mc$ and $\ba$. The exact location of $\bp$ in this diagram is unknown ($\bp$ is contained in $\mc$ and is a superset or subset of  $\teq$ in all known instances \citep{Lasl97a}).}
 \label{fig:sets}
\end{figure}

\section*{Acknowledgments}

I am indebted to Paul Harrenstein for countless valuable discussions.
Furthermore, I thank Haris Aziz, Markus Brill, and Felix Fischer for providing helpful feedback on drafts of this \doc.
This material is based on work supported by the Deutsche Forschungsgemeinschaft under grants BR~2312/3-2, BR~2312/3-3, and BR~2312/7-1.

\extra{
The chapter quotes are taken from the following publications. Chapter~\ref{sec:intro}: \citet*[page~453]{Bord76a}, Chapter~\ref{sec:prelim}: \citet*[page~169]{ReBe78a}, Chapter~\ref{sec:maxsubsets}: \citet*[page~577]{Keme59a}, Chapter~\ref{sec:minstablesets}: \citet*[page~42, `as' has been replaced with `that' in order to rectify what appears to be a typographical error]{vNM44a}, Chapter~\ref{sec:teq}: \citet*[page~1]{Schw86a}, Chapter~\ref{sec:quant}: \citet*[page~437]{Zerm29a}, Chapter~\ref{sec:computation}: \citet*[page~202]{vDam98a}, Chapter~\ref{sec:applications}: \citet*[page~2]{Land51a}, Chapter~\ref{sec:conclusion}: \citet*[page~578]{Keme59a}.
}

\extrapaper{Preliminary results of this paper were presented at the 9th International Meeting of the Society of Social Choice and Welfare (Montreal, June 2008).}

\bibliography{../pamas/abb,../pamas/brandt,../pamas/pamas}

\extra{\bigskip\bigskip\noindent Munich, \today}

\end{document}